\documentclass[10.9pt,a4paper, reqno]{amsart}
\usepackage{a4wide}

\usepackage[utf8]{inputenc}
\usepackage[english]{babel}

\usepackage{amsfonts,amssymb,amsmath}

\usepackage{xcolor}
\usepackage[colorlinks,
    linkcolor={red!50!black},
    citecolor={blue!50!black},
    urlcolor={blue!80!black}]{hyperref}

\usepackage{tikz}
\usepackage[all]{xy}
\usepackage{enumitem}
\makeatletter
\newcommand{\mylabel}[2]{#2\def\@currentlabel{#2}\label{#1}}
\usetikzlibrary{calc, matrix, arrows, cd}

\newcommand{\bsm}{\left(\begin{smallmatrix}}
\newcommand{\esm}{\end{smallmatrix}\right)}
\newtheorem{innercustomthm}{Theorem}
\newenvironment{customthm}[1]
  {\renewcommand\theinnercustomthm{#1}\innercustomthm}
  {\endinnercustomthm}

\newtheorem{theorem}{Theorem}[section]

\newtheorem{lemma}[theorem]{Lemma}
\newtheorem{proposition}[theorem]{Proposition}

\theoremstyle{definition}
\newtheorem{definition}[theorem]{Definition}

\newtheorem{example}[theorem]{Example}

\newtheorem{remark}[theorem]{Remark}

\newtheorem{notation}[theorem]{Notation}
\newtheorem{convention}[theorem]{Convention}
\newtheorem{construction}[theorem]{Construction}
\newtheorem{claim}{Claim}
\newtheorem*{claim*}{Claim}

\newcommand{\wt}{\widetilde}

\newcommand{\sm}{\setminus}

\newcommand{\im}{\operatorname{im}}
\newcommand{\coker}{\operatorname{coker}}

\newcommand{\proj}{\operatorname{proj}}

\newcommand{\rad}{\operatorname{rad}}

\newcommand{\divv}{\operatorname{div}}

 \definecolor{bettergreen}{rgb}{0.0, 0.5 0.0}

\newcommand{\Z}{\mathbb{Z}}
\newcommand{\Q}{\mathbb{Q}}
\newcommand{\R}{\mathbb{R}}
\newcommand{\C}{\mathbb{C}}

\newcommand{\ks}{\operatorname{ks}}
\newcommand{\red}{\operatorname{red}}
\newcommand{\id}{\operatorname{id}}

\newcommand{\Aut}{\operatorname{Aut}}

\title{Non-smoothable surfaces in the 4-sphere}

\author{Anthony Conway}
\author{Daniel Galvin}
\address{The University of Texas at Austin, Austin TX 78712}
\email{anthony.conway@austin.utexas.edu}
\email{daniel.galvin@austin.utexas.edu}

\begin{document}
\begin{abstract}
We construct examples of non-smoothable surfaces in the $4$-sphere, thereby answering Question~4.32 on the K3 problem list.
These surfaces are non-orientable and have knot group of order~$2$,  thus simultaneously answering Question~4.29(a) on the K3 problem list.
\end{abstract}
\maketitle

\section{Introduction}
It has been known since the work of Freedman~\cite{Freedman} and Donaldson~\cite{Donaldson} that~$4$-manifolds exhibit radically different behaviors in the smooth and topological categories.
To name but two examples, there are smooth~$4$-manifolds that are homeomorphic but not diffeomorphic, 
and there are topological manifolds that do not admit any smooth structure.
The difference in categories is also witnessed by knotted surfaces.
Examples of \emph{exotic surfaces} (i.e.  surfaces that are topologically isotopic but not smoothly isotopic) were first discovered by Finashin-Kreck-Viro~\cite{FinashinKreckViro} whereas examples of \emph{non-smoothable} surfaces (i.e. locally flat surfaces that are not isotopic to any smoothly embedded surface) first appeared in~\cite{Kuga,Rudolph}; see also~\cite{Luo,LeeWilczyGenus,Torres}.

The difficulty in finding differences between the smooth and topological categories typically increases as the amount of ambient algebraic topology decreases.
The most challenging questions in the area therefore generally involve the~$4$-sphere~$S^4$.
While the best known example is the smooth~$4$-dimensional Poincar\'e conjecture, 
there are also the questions of whether there exist exotic oriented surfaces in~$S^4$ and  whether there exist non-smoothable surfaces in~$S^4$.
Our main theorem addresses the latter, which is Question 4.32 on the K3-problem list~\cite{K3}.

\begin{theorem}
\label{thm:Main}
For every $h \geq 9$,  there is a non-smoothable surface of non-orientable genus $h$ in~$S^4$.
The fundamental group of the complement of each of these surfaces has order $2$.
\end{theorem}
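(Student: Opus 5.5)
The plan is to realize the surface as the fixed-point set data of a double branched cover, and to show that the branched cover carries an obstruction to smoothness. If $F \subset S^4$ is a non-orientable surface with $\pi_1(S^4 \setminus F) \cong \Z/2$, then the double branched cover $\Sigma(F)$ is simply connected. The unbranched double cover of the complement is the universal cover, and it is simply connected. Adding back the branch locus does not create $\pi_1$. Its Euler characteristic is $\chi(\Sigma(F)) = 2\chi(S^4) - \chi(F) = 4 - (2-h) = h+2$, so $b_2(\Sigma(F)) = h$. If $F$ were isotopic to a smooth surface, then $\Sigma(F)$ would be a smooth closed simply connected $4$-manifold. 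The strategy is therefore to construct $F$ so that $\Sigma(F)$ is topological but not smoothable. The natural candidate is that its intersection form is definite and non-diagonal, e.g.\ containing $E_8$, which Donaldson rules out. Alternatively, the Kirby–Siebenmann invariant could be nonzero, but Rochlin-type constraints on the signature must then be checked. The threshold $h\ge 9$ suggests a form like $E_8 \oplus \langle \pm1\rangle$, or rank $9$ with $b_2 = h$.

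First I construct the surface. I start from a closed simply connected topological $4$-manifold $X$ with $b_2 = 9$ that is not smoothable, and build an involution $\tau$ on $X$ whose quotient is $S^4$ and whose fixed set is a locally flat non-orientable surface. Concretely, I want to mimic the classical fact that $\mathbb{CP}^2/\text{conj} \cong S^4$ with branch set $\mathbb{RP}^2$. The cleaner approach is to work downstairs. I take a smooth surface $F_0 \subset S^4$ (say a connected sum of unknotted $\mathbb{RP}^2$'s or a standard non-orientable surface) and find a locally flat $\Z/2$-surface surgery. Specifically, I remove a neighborhood of a suitable piece and glue in a topological rational-homology-ball-type piece, realized by Freedman's theory as a surface in a contractible or homotopy $4$-ball, with Alexander-polynomial-one type conditions ensuring $\pi_1$ of the complement stays $\Z/2$. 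The effect on the double branched cover is to change the intersection form to one containing $E_8$ with the correct Euler number. Stabilizing by connected sum with unknotted $\mathbb{RP}^2$'s then raises $h$ by one each time, adding $\langle \pm 1\rangle$ summands to the cover, and gives all $h \ge 9$.

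Second, I verify the fundamental group. Since $F$ is non-orientable, $H_1(S^4\setminus F)=\Z/2$. The locally flat piece is built so that its complement has $\pi_1$ normally generated by a meridian with order-$2$ abelianization and trivial commutator data, so van Kampen gives $\pi_1 \cong \Z/2$.

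Third, I run the obstruction. Suppose $F$ is isotopic to a smooth $F'$. Then $\Sigma(F)\cong\Sigma(F')$ is smooth and simply connected. By the Guillou–Marin/Massey formula, the signature equals $-e(F)/2$. A definite form of rank $\le h$ containing $E_8$ would contradict Donaldson's theorem. If instead the form is indefinite, I compute $\ks(\Sigma(F)) \ne 0$ via $\sigma \equiv 8\,\ks \pmod{16}$ for the spin case, or use Furuta's $10/8$ inequality. I expect the main obstacle to be the construction step: producing a locally flat surface whose double branched cover has exactly the prescribed exotic form, while keeping the knot group equal to $\Z/2$. I would first try the equivariant approach, i.e.\ a locally flat involution on the Chern manifold or on $|E_8|\#\overline{\mathbb{CP}^2}$ with quotient $S^4$. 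I would realize it via Freedman's equivariant disc embedding on a contractible piece, rather than attempting the downstairs surgery directly.
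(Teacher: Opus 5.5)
Your endgame matches the paper's. Given a $\Z_2$-surface $F$ with $Q_{\Sigma_2(F)}\cong E_8\oplus(1)$, the connected sums $F\# U_k$ have branched covers with definite non-diagonalisable forms, and Donaldson's theorem makes them non-smoothable.

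\textbf{The gap: you never construct $F$.} The existence of such an $F$ is the entire content of the theorem (it is Theorem~\ref{thm:precise}), and your construction step does not supply it. ``Glue in a topological rational-homology-ball-type piece'' names no piece. It also gives no mechanism by which the branched cover acquires an $E_8$ summand while $\pi_1$ of the complement stays $\Z_2$. Nor is there an off-the-shelf ``equivariant disc embedding'' theorem producing an involution with quotient $S^4$ and prescribed fixed set.

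\textbf{The missing idea: work downstairs in the exterior.} There $\pi_1=\Z_2$ is a good group. The paper's argument runs as follows.
\begin{enumerate}
\item It seeks to realise an explicit $\Z[\Z_2]$-hermitian form $\lambda_b$ on $\Z_-\oplus\Z[\Z_2]^{h-1}$ as the equivariant intersection form of a filling of $\partial X_{U_h}$.
\item By Freedman's sphere embedding theorem, stable realisation suffices (Proposition~\ref{prop:StablyRealisable}).
\item Stable isometry with $\lambda_{U_h}$ comes from the Hambleton--Riehm pullback. One builds an isometry $2u^{nd}_{ext}\oplus H(\Z_-)^{\oplus 2}\cong 2b^{nd}_{ext}\oplus H(\Z_-)^{\oplus 2}$ of the minus forms via Nikulin.
\item This isometry is corrected by Eichler's criterion so that it descends mod $2$. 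This is where $b=b'\oplus(1)$ with $b'$ even, and $4\mid h-1$, are used.
\item The mod-$2$ isometry is then lifted to the plus forms.
\item Capping the filling with the disc bundle of Euler number $-2h$ gives a homotopy $4$-sphere, hence $S^4$, and the knot group is $\Z_2$ by construction.
\end{enumerate}
Even after this, one only knows that $Q_{\Sigma_2(F)}$ and $b$ share the index-two sublattice $b^{nd}_{ext}$. A Kneser-neighbour argument (Proposition~\ref{prop:indextwo}, which needs $h\not\equiv 4 \bmod 8$) is required to conclude $Q_{\Sigma_2(F)}\cong b$.

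\textbf{Your fallback options are impossible, not just unneeded.} The deck involution on $\Sigma_2(F)$ is orientation preserving and locally linear, since a locally flat surface has a normal bundle. Hence $\ks(\Sigma_2(F))=0$ for every locally flat $F\subset S^4$. This follows from Kwasik--Vogel, or directly: $\Sigma_2(F)$ is an honest double cover of the exterior glued to a disc bundle, and the Kirby--Siebenmann invariant doubles under double covers. Consequences:
\begin{itemize}
\item No Kirby--Siebenmann argument can detect non-smoothability here.
\item The Chern manifold cannot arise as $\Sigma_2(F)$: it has $\ks=1$, and also $b_2=1\neq h$.
\item $E_8\#\overline{\C P}^2$ cannot arise either, since it has $\ks=1$.
\end{itemize}
For $h=9$ the branched cover the paper actually produces is $E_8\#\ast\C P^2$, whose Kirby--Siebenmann invariant vanishes. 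The obstruction therefore has to come from Donaldson's theorem applied to the definite form alone.
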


We describe how Theorem~\ref{thm:Main} also answers Question 4.29(a) on the K3-problem list.
First,  some terminology is in order.
A non-orientable surface in~$S^4$ is \emph{unknotted} if it is isotopic to a connected
sum of unknotted projective planes.
In turn, a projective plane in~ $S^4$ is \emph{unknotted} if it is isotopic to the branch locus of one of the branched covers~$\C P^2 \to S^4$ or~$\overline{\C P}^2 \to S^4$ induced by complex conjugation.

The fundamental group of the complement, also called the \emph{knot group},  of such non-orientable unknotted surfaces has order $2$.
Whether or not the converse holds,  i.e.\ whether \emph{$\Z_2$-surfaces} are necessarily unknotted, has been asked in Question 4.29(a) on the K3-problem list as well as in~e.g.~\cite[p.~181]{KawauchiSurvey},~\cite[p.~55]{KamadaBook}, ~\cite[p.~7]{Matrix} and~\cite[Question~1.1]{ConwayOrsonPowell}.
By virtue of being non-smoothable, the $\Z_2$-surfaces in Theorem~\ref{thm:Main} are necessarily knotted, thus answering this question in the negative.

\subsection{Further context and comparisons}
A driving question in the study of knotted surfaces asks whether an embedded surface in $S^4$ with cyclic knot group is necessarily unknotted.
Here, note the knot group is $\Z$ or $\Z_2$ according to whether or not the surface is orientable.
In the topological category,  the answer is positive for orientable surfaces of genus $g \neq 1,2$~\cite{FreedmanQuinn,ConwayPowell} (and remains open for $g=1,2$) as well as for surfaces with non-orientable genus~$h$ and normal Euler number~$e$, when~$|e| \neq 2h$~\cite[Theorem A]{ConwayOrsonPowell}.
The same holds for~$h \leq 5$ and~$|e|=2h$ (with the case $h=1$ originally due to Lawson~\cite{Lawson} and the cases~$h=4,5$ requiring~\cite{Pencovitch}); see also~\cite{FinashinKreckViro,KreckOnTheHomeomorphism} for early work on the topic.
Combining these results with Theorem~\ref{thm:Main} addresses the question of whether~$\Z_2$-surfaces of non-orientable genus~$h$ are unknotted for every~$h \neq 6,7,8$.
In fact, in the topological category,  the aforementioned question was often phrased as the conjecture that locally flat surfaces in $S^4$ with cyclic knot group are unknotted, see e.g.~\cite{Matrix,K3}.  
In the smooth category,  the question remains open in the orientable case, but admits a negative answer in the non-orientable case~\cite{FinashinKreckViro,Finashin,Miyazawa,MaticOzturkReyesStipsiczUrzua}.

\subsection{Outline of the proof}
There are natural candidates for non-smoothable~$\Z_2$-surfaces, namely those whose double branched cover have a definite but non-diagonalisable form.
The challenge is to construct a surface realising such a form.
Section~\ref{sec:ExteriorNonDegenerateForm} associates to every odd non-singular symmetric bilinear form~$b \colon \Z^h \times \Z^h \to \Z$ a non-degenerate form~$b^{nd}_{ext}$ that is the algebraic incarnation of the intersection form on the universal cover of the surface exterior; we call this form the \emph{exterior non-degenerate form.}
To this form Section~\ref{sec:Pullback} associates a $\Z[\Z_2]$-valued hermitian form $\lambda_b$ we call the \emph{exterior equivariant form} via a pullback construction due to Hambleton-Riehm~\cite{HambletonRiehm}:
$$
\xymatrix@R0.4cm@C0.3cm{
(\Z_- \oplus \Lambda^{h-1},\lambda_b) \oplus H(\Lambda)^{\oplus 2} \ar[r]\ar[d]& (\Z_-^h,2b_{ext}^{nd})\ar[d] \oplus H(\Z_+)^{\oplus 2} \\
(\Z_+^{h-1},0)\ar[r]\oplus H(\Z_+)^{\oplus 2}& (\Z_2^{h-1},0) \oplus H(\Z_2)^{\oplus 2}.
}
$$
Here,  we write~$\Lambda:=\Z[\Z_2]$ and~$\Z_{\pm}:=\Lambda/(T\mp~1)$ as well as~$H(\Lambda),H(\Z_\pm),H(\Z_2)$ for the corresponding hyperbolic forms.
The maps in this diagram will be described in  Construction~\ref{conv:Identification}.

We observe in Proposition~\ref{prop:StablyRealisable},  using the work of Freedman~\cite{Freedman, PowellRayTeichner},
that this latter form is realisable as the equivariant intersection form of a surface exterior if and only if it is stably realisable, that is realisable after taking the direct sum with some number of hyperbolic forms.

 Section~\ref{sec:Proof} studies the stable realisability of~$\lambda_b$ by using properties of the pullback.
 Specifically, we compare $\lambda_b$ to the equivariant intersection form $\lambda_u$ of the exterior of the Euler number~$-2h$ nonorientable genus $h$ unknotted surface~$U \subset S^4$ which also fits in a pullback square of the form
 $$
\xymatrix@R0.4cm@C0.3cm{
(\Z_- \oplus \Lambda^{h-1},\lambda_u) \oplus H(\Lambda)^{\oplus 2} \ar[r]\ar[d]& (\Z_-^h,2u_{ext}^{nd})\ar[d] \oplus H(\Z)^{\oplus 2} \\
(\Z_+^{h-1},0)\ar[r]\oplus H(\Z_+)^{\oplus 2}& (\Z_2^{h-1},0) \oplus H(\Z_2)^{\oplus 2}. 
}
$$ 
Here, the diagonal form~$u:=(1)^{\oplus h}$ is the intersection form of the $2$-fold branched cover $\Sigma_2(U)$.

In order to stably realise~$\lambda_b$,  Proposition~\ref{prop:alpha-} first uses the isometry $b \oplus H(\Z) \cong u \oplus H(\Z)$ to build an isometry
$$\beta \colon (\Z_-^h,2u^{nd}_{ext})\oplus H(\Z_-)^{\oplus 2}\to(\Z_-^h,2b^{nd}_{ext})\oplus H(\Z_-)^{\oplus 2} .
$$
When $b \cong b' \oplus (1)$ with $b'$ an even definite non-singular symmetric bilinear form,  Proposition~\ref{prop:alpha-Fix} then shows that $\beta$ can be modified so as to yield a new isometry $\alpha_-$ that descends to an isometry
$$
\alpha_2 \colon (\Z_2^{h-1},0)\oplus H(\Z_2)^{\oplus 2} \to (\Z_2^{h-1},0)\oplus H(\Z_2)^{\oplus 2}.
$$
Proposition~\ref{prop:alpha+} then lifts $\alpha_2$ to an isometry
$$
\alpha_+ \colon (\Z_+^{h-1},0)\oplus H(\Z_+)^{\oplus 2} \to (\Z_+^{h-1},0)\oplus H(\Z_+)^{\oplus 2}.
$$
The stable realisability of~$\lambda_b$ then essentially follows from the definition of pullbacks.
The outcome is a~$\Z_2$-surface~$F \subset S^4$ for which the (non-degenerate part of the) intersection form on the universal cover of the surface exterior is~$b^{nd}_{ext}$.
Finally,  using  results in lattice theory, we show that the branched double cover~$\Sigma_2(F)$ has intersection form~$Q_{\Sigma_2(F)}$ isometric to~$b$.

In summary, we will prove the following theorem.

\begin{theorem}\label{thm:precise}
If~$b'\colon \Z^{h-1} \times \Z^{h-1} \to \Z$ is an even definite non-singular symmetric bilinear form, then there exists a non-orientable~$\Z_2$-surface~$F\subset S^4$ such that~$Q_{\Sigma_2(F)} \cong b:=b'\oplus(1)$.
\end{theorem}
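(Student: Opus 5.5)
The construction will follow the outline above: realise the exterior equivariant form $\lambda_b$ stably, upgrade this to an honest realisation, and then identify the intersection form of the branched cover. Let $U\subset S^4$ be the unknotted non-orientable surface of genus $h$ and normal Euler number $-2h$, and set $u=(1)^{\oplus h}$. Since $b=b'\oplus(1)$ is odd and non-singular of rank $h$, classification of odd indefinite unimodular forms gives an isometry $b\oplus H(\Z)\cong u\oplus H(\Z)$. Proposition~\ref{prop:alpha-} turns this into an isometry $\beta$ between $(\Z_-^h,2u^{nd}_{ext})\oplus H(\Z_-)^{\oplus 2}$ and $(\Z_-^h,2b^{nd}_{ext})\oplus H(\Z_-)^{\oplus 2}$. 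The hypothesis that $b'$ is even and definite is exactly what Proposition~\ref{prop:alpha-Fix} needs to modify $\beta$ into $\alpha_-$, which reduces mod $2$ to an isometry $\alpha_2$ of $(\Z_2^{h-1},0)\oplus H(\Z_2)^{\oplus 2}$. Proposition~\ref{prop:alpha+} then lifts $\alpha_2$ to $\alpha_+$ over $\Z_+$.

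Next I use the Hambleton--Riehm pullback squares of Construction~\ref{conv:Identification} for $\lambda_u$ and $\lambda_b$. The pair $(\alpha_-,\alpha_+)$ agrees over $\Z_2$, so by the universal property of the pullback $\Lambda=\Z_+\times_{\Z_2}\Z_-$ it induces a $\Lambda$-isometry
$$(\Z_-\oplus\Lambda^{h-1},\lambda_u)\oplus H(\Lambda)^{\oplus 2}\cong(\Z_-\oplus\Lambda^{h-1},\lambda_b)\oplus H(\Lambda)^{\oplus 2}.$$
I must check that the induced map is bijective and not merely a morphism. This follows because both squares are genuine pullbacks of modules, and a map of pullbacks whose three corner components are isomorphisms is an isomorphism.

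The left side is the stabilised equivariant intersection form of the exterior of $U$, possibly after further stabilisation by ambient connected sum with $S^2\times S^2$. Hence $\lambda_b$ is stably realisable. Proposition~\ref{prop:StablyRealisable}, which rests on Freedman's surgery with the good group $\Z_2$, then yields a locally flat $\Z_2$-surface $F\subset S^4$ of non-orientable genus $h$. The equivariant intersection form of its exterior is $\lambda_b$, and the non-degenerate part of the intersection form on the universal cover of the exterior is $b^{nd}_{ext}$.

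The final step, which I expect to be the main obstacle, is deducing $Q_{\Sigma_2(F)}\cong b$. The universal cover of the exterior is $\Sigma_2(F)$ minus a tubular neighbourhood of the lifted surface $\wt F\cong F$. Its non-degenerate intersection lattice is the orthogonal complement of the class $[\wt F]$ in $H_2(\Sigma_2(F))$, taken modulo radical. So $b^{nd}_{ext}$ determines $Q_{\Sigma_2(F)}$ only up to a lattice-theoretic ambiguity: which unimodular lattice of rank $h$ contains $b^{nd}_{ext}$ as the complement of a characteristic-type vector of square $\pm 2h$ or similar. First I would compute $H_2(\Sigma_2(F))\cong\Z^h$ and the signature from the G-signature theorem, using the normal Euler number forced by the form. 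This shows that $Q_{\Sigma_2(F)}$ is definite of rank $h$. Then I would show that the orthogonal complement construction $b\mapsto b^{nd}_{ext}$ can be inverted: recover the ambient unimodular lattice as the unique unimodular overlattice of $b^{nd}_{ext}\oplus\langle[\wt F]\rangle$ compatible with the gluing data. Equivalently, I would show that any definite unimodular $Q$ with $Q^{nd}_{ext}\cong b^{nd}_{ext}$ is isometric to $b$, by reconstructing the gluing from the discriminant form of $b^{nd}_{ext}$. This is where the lattice theory enters, and I expect it to require care with the characteristic element.
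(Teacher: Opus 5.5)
Your algebraic steps are the same as the paper's: the indefinite classification, Propositions~\ref{prop:alpha-}, \ref{prop:alpha-Fix} and \ref{prop:alpha+}, and the Hambleton--Riehm pullback. One small point there: that the glued map preserves $\lambda$ comes from the uniqueness in Theorem~\ref{thm:HambletonRiehmTheorems36}, as in Proposition~\ref{prop:pullback_isometry}, not from bijectivity alone.

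Two things are missing afterwards. The smaller one is that Proposition~\ref{prop:StablyRealisable} only gives a filling $X'$ of $(\partial X_U,\varphi)$ with $\pi_1(X')\cong\Z_2$, not a surface. You must:
\begin{itemize}
\item glue back the normal disc bundle of $U$, which has Euler number $-2h$;
\item check by van Kampen that the result is simply connected;
\item get $b_2=0$ from $H_2(\widetilde{X'})\cong\Z_-\oplus\Lambda^{h-1}$ (so $\chi(X')=h$) together with $\chi(F)=2-h$;
\item apply Freedman to identify the result with $S^4$.
\end{itemize}
This is also where $e(F)=-2h$ comes from.

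The real gap is the identification $Q_{\Sigma_2(F)}\cong b$. You leave it as a plan, and the plan's first formulation is wrong. Since $\wt F\cong F$ is non-orientable, it has no integral class. Moreover $Q^{nd}_{\widetilde{X}_F}$ is not the complement of a rank-one class. By \cite[Proposition~5.10]{ConwayOrsonPowell} (see Example~\ref{ex:ExteriorForm}), it embeds as the full-rank, index-two sublattice $\{x : x\cdot x\equiv 0 \bmod 2\}$ of $H_2(\Sigma_2(F))\cong\Z^h$. An overlattice of $b^{nd}_{ext}\oplus\langle[\wt F]\rangle$ would have rank $h+1$.

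Your second formulation is the right target: every definite unimodular $Q$ with $Q^{nd}_{ext}\cong b^{nd}_{ext}$ is isometric to $b$. But it is false in general: $I_{12}$ and $D_{12}^+$ are non-isometric odd unimodular lattices with the same even sublattice $D_{12}$.

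The missing input is the rank. Since $b'$ is even, definite and unimodular, $\operatorname{rk} b'\equiv 0 \pmod 8$, so $h$ is odd. Then:
\begin{itemize}
\item A characteristic vector $\xi$ of $b$ has odd square, so $\xi/2$ has order $4$ in $(b^{nd}_{ext})^*/b^{nd}_{ext}$. This group has order $4$, hence is $\Z_4$, consistent with the Smith form $\operatorname{diag}(1,\dots,1,4)$ in the paper.
\item Any unimodular $L\supseteq M\cong b^{nd}_{ext}$ of the same rank satisfies $M\subset L\subset M^*$ and $[L:M]^2=[M^*:M]=4$.
\item Since $\Z_4$ has a unique subgroup of order two, such an $L$ is unique.
\end{itemize}
Extending an isometry $b^{nd}_{ext}\cong Q^{nd}_{\widetilde X_F}$ rationally therefore carries the lattice of $b$ onto $H_2(\Sigma_2(F))$, giving $Q_{\Sigma_2(F)}\cong b$.

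The paper reaches the same conclusion by citing Chenevier's count of Kneser $2$-neighbours sharing $M_2(L)$ (Proposition~\ref{prop:indextwo}, valid for $h\not\equiv 4 \pmod 8$). Your discriminant-form route, once completed this way, is a more elementary argument that suffices for the odd rank needed here.
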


We explain how Theorem~\ref{thm:precise} combines with work of Donaldson~\cite{Donaldson} to yield Theorem~\ref{thm:Main}.

\begin{proof}[Proof of Theorem \ref{thm:Main}]
Set~$b'=E_8$ and apply Theorem~\ref{thm:precise} to construct a~$\Z_2$-surface~$F\subset S^4$ such that~$Q_{\Sigma_2(F)} \cong b:=b'\oplus (1)$.
Now let~$k\geq 0$ and let~$U_k\subset S^4$ be the unknotted surface of non-orientable genus~$k$ with Euler number~$-2k$ (if~$k=0$ this is by convention the unknotted~$2$-sphere).  
Form the surface~$F_k:=F\# U_k$.  
Since~$\Sigma_2(U_k)\cong \#_k \C P^2$, we deduce the homeomorphism~$\Sigma_2(F_k)\cong \Sigma_2(F)\# (\#_k \C P^2)$ and hence~$Q_{\Sigma_2(F_k)}\cong b\oplus (1)^{\oplus k}$.  
These forms are non-diagonalisable, and hence~$F_k$ is a knotted surface of non-orientable genus~$9+k$. 
By Donaldson~\cite{Donaldson},~$Q_{\Sigma_2(F_k)}$ is not realised by a smooth~$4$-manifold and so~$\Sigma_2(F)$ is non-smoothable.  
We deduce that~$F_k \subset S^4$ is non-smoothable.
\end{proof}
\begin{remark}
Realising the form~$E_8\oplus(1)$ suffices to prove Theorem \ref{thm:Main} but we nevertheless wish to emphasise that Theorem~\ref{thm:precise} leads to the following stronger statement: ``For every~$h \geq~9$,  with $h\equiv 1\pmod{8}$,  there are~$N(h)$ non-smoothable~$\Z_2$-surfaces of nonorientable genus~$h$ in~$S^4$, where~$N(h)$ denotes the number of non-diagonalisable even definite non-singular symmetric bilinear forms of rank~$h-1$."  
The Minkowski-Siegel mass formula implies that~$N(h)$ likely grows super-exponentially; we refer to~\cite[pages~49, 403 and Chapter~16]{ConwaySloane} for a detailed discussion.
For example,  there are at least~$10^{9}$ non-smoothable~$\Z_2$-surfaces of non-orientable genus~$33$, pairwise distinct up to isotopy.
\end{remark}

\begin{remark}
The branched double covers~$\Sigma_2(F)$ for the surfaces in the above theorems have trivial Kirby-Siebenmann invariant; this follows from the fact that if a closed oriented~$4$-manifold~$X$ supports an orientation preserving locally linear involution, then~$\ks(X)=0$~\cite{KwasikVogel}; see also~\cite[Remark on page 120]{EdmondsAspects}.
  Accordingly, in the simplest case of our theorem (for~$b=E_8\oplus (1)$) we have that~$\Sigma_2(F) \cong E_8\# \ast \C P^2 $, where~$E_8$ denotes Freedman's~$E_8$ manifold and~$\ast \C P^2$ denotes the Chern manifold, which is homotopy equivalent but not homeomorphic to~$\C P^2$.
This surface~$F\subset S^4$ has non-orientable genus $9$ and normal Euler number $-18$.
\end{remark}

\subsection*{Organisation}

Section~\ref{sec:StableRealisation} recalls some facts about intersection forms and notes that the realisation problem is a stable question.
Section~\ref{sec:ExteriorNonDegenerateForm} introduces the exterior non-degenerate form.
Section~\ref{sec:Pullback} recalls Hambleton-Riehm's pullback construction of hermitian forms.
Section~\ref{sec:Proof} proves Theorems~\ref{thm:precise}.
Appendix~\ref{sec:lattices} contains a brief discussion of lattices and Kneser neighbours.

\subsection*{Acknowledgments}
AC was partially supported by the NSF grant DMS~2303674.
We are very grateful to Peter Teichner for pointing out a gap in a previous version of this work.
We also thank Lisa Piccirillo for helping us with the handle diagrams involved in the proof of Proposition~\ref{prop:IdentificationUnknot}.

\subsection*{Conventions}
We work in the topological category with locally flat embeddings.
Manifolds are assumed to be compact, connected and oriented unless otherwise specified.
Modules are assumed to be finitely generated.  
	To simplify notation the proofs are written with all definite forms considered to be positive.  The interested reader can readily extend all herein arguments to negative definite forms by substituting the non-orientable genus $h$ unknot with Euler number $-2h$ for the unknot with Euler number $+2h$ and making the corresponding subsequent modifications.

\section{Realisation and stable realisation of hermitian forms}
\label{sec:StableRealisation}

Given a finite group $\pi$, this section is concerned with the problem of realising a~$\Z[\pi]$-valued hermitian form as the equivariant intersection form of a~$4$-manifold with a prescribed boundary and fundamental group $\pi$.
After introducing some terminology,  we show that this is essentially a stable problem (Proposition~\ref{prop:StablyRealisable}).

\medbreak

Fix a closed~$3$-manifold~$Y$ and an epimorphism~$\varphi \colon \pi_1(Y) \twoheadrightarrow \pi$ to a finite group~$\pi$.
We say that a~$4$-manifold~$X$ with~$\pi_1(X) \cong \pi$ is a \emph{filling of~$(Y,\varphi)$} if there is a homeomorphism~$ \partial X \cong Y$ such that the composition~$\pi_1(Y) \cong \pi_1(\partial X) \to \pi_1(X) \cong \pi$ agrees with~$\varphi$.
In what follows, given a filling~$X$ of~$(Y,\varphi)$ with~$\pi_1(X) \cong \pi$, we write~$\widetilde{X}$ for its universal cover,~$Y^\varphi=\partial \widetilde{X}$ for its boundary,~$Q_{\widetilde{X}}$ for the~$\Z$-intersection form on~$H_2(\widetilde{X})$, and~$Q_{\widetilde{X}}^{nd}$ for the induced non-degenerate form on~$\coker(i_* \colon H_2(Y^{\varphi}) \to H_2(\widetilde{X}))$.
Here, we used that
$$
\Big\lbrace x \in H_2(\widetilde{X}) \mid Q_{\widetilde{X}}(x,y)=0  \text{ for all } y \in H_2(\widetilde{X}) \Big\rbrace
=
\im(i_* \colon H_2(Y^{\varphi}) \to H_2(\widetilde{X})).
$$
The left hand side of this equation is referred to as the \emph{radical} of~$Q_{\widetilde{X}}$ and is abbreviated as~$\operatorname{rad}$.
Since~$\pi_1(\partial X) \to \pi_1(X)$ is surjective (so that~$H_3(\widetilde{X}) \cong H^1(\widetilde{X},\partial \widetilde{X})=0$), it follows that $i_*$ is injective, whence $\rad \cong H_2(Y^{\varphi})$.

We note that $\im(i_*)$ is also the radical of the $\Z[\pi]$-equivariant intersection form 
$$\lambda_X \colon H_2(\widetilde{X}) \times H_2(\widetilde{X}) \to \Z[\pi].$$
A \emph{stabilisation} of a $\Z[\pi]$-hermitian form~$(H,\lambda)$ refers to the hermitian form~$(H,\lambda)  \oplus \left( \Z[\pi]^2, \bsm 0&1\\ 1&0 \esm \right).$
We say that~$(H,\lambda)$ and~$(H',\lambda')$ are \emph{stably isometric} if they become isometric after each pair is stabilised~$n$ times for some~$n\geq 0$.
A $\Z[\pi]$-hermitian form~$(H,\lambda)$ is \emph{realisable} if there is a filling~$X$ of~$(Y,\varphi)$ with~$\pi_1(X) \cong \pi$ and~$(H_2(\widetilde{X}),\lambda_X) \cong (H,\lambda).$
A form~$(H,\lambda)$ is \emph{stably realisable} if $(H,\lambda)$ becomes realisable after some number of stabilisations.

The following proposition is similar to~\cite[Lemma~4.1]{HambletonKreck}.

\begin{proposition}
\label{prop:StablyRealisable}
Fix a closed~$3$-manifold~$Y$ and an epimorphism~$\varphi \colon \pi_1(Y) \twoheadrightarrow \pi$ to a finite group~$\pi$.
A $\Z[\pi]$-hermitian form~$(H,\lambda)$ is realisable by a filling of~$(Y,\varphi)$ if and only if it is stably realisable by a filling of~$(Y,\varphi)$.
\end{proposition}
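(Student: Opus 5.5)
The implication from realisable to stably realisable is trivial: take zero stabilisations. For the converse, suppose $X$ is a filling of $(Y,\varphi)$ with $\pi_1(X)\cong\pi$ and an isometry
$$(H_2(\widetilde X),\lambda_X)\cong (H,\lambda)\oplus H(\Z[\pi])^{\oplus n}.$$
The plan is to remove the $n$ hyperbolic summands one at a time by topological surgery. Each removal should produce a new filling $X'$ of $(Y,\varphi)$ with $\pi_1(X')\cong\pi$ and $\lambda_{X'}\cong(H,\lambda)\oplus H(\Z[\pi])^{\oplus(n-1)}$. Induction on $n$ then finishes the argument.

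For one step, let $e,f\in H_2(\widetilde X)$ be the basis of the last hyperbolic summand, so $\lambda_X(e,e)=\lambda_X(f,f)=0$ and $\lambda_X(e,f)=1$.
\begin{itemize}
\item Since $X$ is a $4$-manifold, the Hurewicz map $\pi_2(X)\cong\pi_2(\widetilde X)\to H_2(\widetilde X)$ is an isomorphism, because $\widetilde X$ is simply connected. So $e$ and $f$ are represented by immersed spheres in $X$ with framed basings.
\item Since $\lambda(e,e)=0$, we may also arrange that the self-intersection invariant $\mu(e)$ vanishes. This uses the quadratic refinement: for $2$-torsion elements of $\pi$ there is a possible ambiguity, which we absorb by adding local kinks, and we adjust the normal Euler number using $f$ if needed. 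The paper works with $\pi=\Z_2$ and the hyperbolic form is the standard even one, so this should be fine.
\item Because $\pi$ is finite, hence a good group, Freedman's disc embedding theorem \cite{Freedman, PowellRayTeichner} applies. With $f$ as an algebraically dual sphere, it lets us replace $e$ by a locally flat embedded sphere $S\subset X$ with trivial normal bundle, homotopic to $e$, together with an immersed geometric dual $f'$. In fact we only need to invoke the embedding theorem for spheres with $\pi_1$-null dual, which is the sphere embedding theorem in \cite{PowellRayTeichner}.
\end{itemize}

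Next we do surgery on $S$, interior to $X$, replacing $S^2\times D^2$ by $D^3\times S^1$. Call the result $X'$. Its boundary is still $Y$, so the identification $\partial X'\cong Y$ and the composite with $\varphi$ are unchanged.
\begin{itemize}
\item The geometric dual $f'$ meets $S$ once, so the meridian of $S$ bounds a disc ($f'$ punctured). Hence $\pi_1(X')\cong\pi_1(X)\cong\pi$, and the map $\pi_1(Y)\to\pi_1(X')$ still agrees with $\varphi$.
\item Lifting to universal covers, the surgery is done equivariantly on the $\pi$ lifts of $S$. The standard computation gives $H_2(\widetilde X')\cong e^\perp/\langle e\rangle$ as a $\Z[\pi]$-module, and this is isometric to the orthogonal complement of the hyperbolic summand spanned by $e,f$. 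Here we use that $\lambda(e,f)=1$ splits off $\langle e,f\rangle$ orthogonally.
\item Therefore $\lambda_{X'}\cong (H,\lambda)\oplus H(\Z[\pi])^{\oplus(n-1)}$.
\end{itemize}

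The main obstacle is the embedding step. Representing $e$ by a framed embedded sphere with a geometric dual requires checking the hypotheses of the disc embedding theorem: vanishing $\mu$, the existence of a $\pi_1$-null immersed dual, and goodness of $\pi$. I would first try to verify these directly from the hyperbolic structure. Alternatively, I would follow the argument of \cite[Lemma~4.1]{HambletonKreck}, where the analogous cancellation is carried out.
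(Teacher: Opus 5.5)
Your proposal is correct and follows essentially the paper's route: finite groups are good, so the sphere embedding theorem applies to the hyperbolic pairs, and surgery then removes them. The paper packages this as splitting off $n$ copies of $S^2\times S^2$ via \cite[Theorem~2.3]{PowellRayTeichner}; you instead surger one embedded sphere with a geometric dual at a time and compute $H_2(\widetilde{X}')\cong e^\perp/\langle e\rangle$ by hand. One correction: $\mu(e)=0$ holds simply because $X$ is oriented, so $\mu$ is determined by $\lambda$ except for the coefficient of $1$, which local kinks adjust. There is no ambiguity at order-two elements to absorb (and local kinks could not absorb one anyway), no $\pi_1$-null hypothesis on the dual is needed, and the argument works for any finite $\pi$, not just $\Z_2$.
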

\begin{proof}
A realisable form is certainly stably realisable, so we focus on the converse.
Assume there is a filling~$X$ of~$(Y,\varphi)$ with~$\pi_1(X) \cong \pi$,  and an isometry
$$(H,\lambda)   \oplus \left( \Z[\pi]^2, \bsm 0&1\\ 1&0 \esm\right)^{\oplus n} \cong (H_2(\widetilde{X}),\lambda_X).$$
Since finite groups are good (see e.g.~\cite[Theorem 19.2]{DET}) and $H_2(\widetilde{X}) \cong \pi_2(X)$, the sphere embedding theorem~\cite{FreedmanQuinn} now implies that~$X$ splits off~$n$ copies of~$S^2 \times S^2$; see e.g.~\cite[Theorem~2.3]{PowellRayTeichner}; the condition involving self-intersections can be omitted because~$X$ is orientable.
Surgering these connected summands yields a filling of~$(Y,\varphi)$ that realises~$(H,\lambda)$.
\end{proof}

\section{The exterior non-degenerate form}
\label{sec:ExteriorNonDegenerateForm}

This section introduces an algebraic construction that captures how, given a~$\Z_2$-surface~$F \subset S^4$ with exterior~$X_F$,  the  non-degenerate~$\Z$-intersection form on the universal cover of~$\widetilde{X}_F$ can be recovered from the intersection form of the double branched cover.

\begin{definition}
\label{def:ExteriorForm}
The \emph{exterior non-degenerate form}~$b_{ext}^{nd}$ associated to a non-singular symmetric bilinear form~$(H,b)$ over~$\Z$ refers to the pair~$(H_{ext}^{nd},b_{ext}^{nd})$, where
\begin{itemize}
\item the~$\Z[\Z_2]$-module~$H_{ext}^{nd}$ is defined, as an abelian group, as
$$H_{ext}^{nd} :=\Big\{ x \in H \mid b(x,x)=0 \mod 2 \Big\} \subset H$$
and is endowed with the~$\Z[\Z_2]$-module structure induced by~$Tx=-x$ for every~$x \in H_{ext}^{nd}$.
\item The non-degenerate symmetric bilinear form~$b_{ext}^{nd}$ is the restriction of~$b$ to~$H_{ext}^{nd}$.
\end{itemize}
\end{definition}

If a non-singular symmetric bilinear form $b$ is odd,  then the map $H \to \Z_2,x \mapsto b(x,x) \mod 2$ is surjective and leads to the short exact sequence
$$
0 \to H_{ext}^{nd}  \xrightarrow{\subset} H \xrightarrow{} \Z_2 \to 0.
$$
In particular, $(H_{ext}^{nd}, b_{ext}^{nd})$ is an index $2$ subform of $(H,b)$.

\begin{example}
\label{ex:ExteriorForm}
We illustrate this definition with some examples.
\begin{itemize}
\item If~$(H,b)=(H_2(\Sigma_2(F)),Q_{\Sigma_2(F)})$ is the~$\Z$-intersection form of the branched cover of a~$\Z_2$-surface~$F \subset S^4$, then, by~\cite[Proposition 5.10]{ConwayOrsonPowell},  the associated exterior form is the non-degenerate form~$Q_{\widetilde{X}_F}^{nd}$ on~$H_2(\widetilde{X}_F)/\operatorname{rad}$.
Here,  recall that~$\widetilde{X}_F$ denotes the universal cover of the exterior of~$F$.
We record the precise statement for later use.
First, ~\cite[Proposition 5.10]{ConwayOrsonPowell} shows that the inclusion $\widetilde{X}_F \hookrightarrow \Sigma_2(F)$ induces 
a form-preserving injection
$$ 
(H_2(\widetilde{X}_F)/\operatorname{rad},Q_{\widetilde{X}_F}^{nd})
\hookrightarrow
(H_2(\Sigma_2(F)),Q_{\Sigma_2(F)}).
$$
Then~\cite[Proposition 5.10]{ConwayOrsonPowell} shows that the image of this injection is~$(H^{nd}_{ext},b_{ext}^{nd})$.
\item If~$b$ is a non-singular symmetric bilinear form and~$H$ is a hyperbolic form over~$\Z$,  then 
$$
(b \oplus H)_{ext}^{nd} \cong b_{ext}^{nd} \oplus H.
$$
This is because hyperbolics are
even and therefore all squares are zero mod~$2$.
\item When~$(H,b)=(\Z^h,(1)^{\oplus h})$, the calculation from~\cite[Proof of Proposition~5.11]{ConwayOrsonPowell} gives
\begin{equation}
\label{eq:COPMatrix}
(H_{ext}^{nd},b_{ext}^{nd})=
	    \begin{pmatrix}
	    	4 & 2 & \cdots & & & 2 \\
	    	2 & 2 & 1 & \cdots & & 1 \\
	    	\vdots & 1 & 2 & 1 & \cdots & \vdots \\
	    	& \vdots & 1 & \ddots & & \\
	    	& & \vdots & & \ddots& 1 \\
	    	2 & 1 & \cdots & & 1 & 2 
	    \end{pmatrix}.
\end{equation}
In particular,  this describes $Q_{\widetilde{X}_F}^{nd}$ when~$F=U \subset S^4$ is the unknotted surface of non-orientable genus $h$ and extremal Euler number $e=-2h$.
\end{itemize}
\end{example}

The next proposition  records how this construction interacts with stabilisations.

\begin{proposition}
\label{prop:NonDegUnderStable}
If two non-singular symmetric bilinear forms~$b$ and~$b'$ are stably isometric, then their exterior forms~$(H_{ext}^{nd},b_{ext}^{nd})$ and~$((H_{ext}')^{nd},(b_{ext}')^{nd})$ are stably isometric.
\end{proposition}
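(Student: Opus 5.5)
Stable isometry of $b$ and $b'$ means there is an $n \geq 0$ and an isometry
$$\phi \colon (H,b) \oplus H(\Z)^{\oplus n} \xrightarrow{\cong} (H',b') \oplus H(\Z)^{\oplus n}.$$
The plan is to show that the construction $b \mapsto b^{nd}_{ext}$ is natural under isometries and compatible with orthogonal sums with hyperbolic forms. The stable isometry of the exterior forms then follows formally.

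The first step is naturality. Suppose $\phi \colon (H,b) \to (H',b')$ is an isometry of non-singular symmetric bilinear forms. For every $x \in H$ we have $b'(\phi(x),\phi(x)) = b(x,x)$, so in particular these agree modulo $2$. Hence $\phi$ maps $H^{nd}_{ext} = \{x \mid b(x,x) \equiv 0 \bmod 2\}$ bijectively onto $(H')^{nd}_{ext}$, with $\phi^{-1}$ giving the inverse. The restriction preserves the restricted forms. It is also $\Z[\Z_2]$-linear, because on both sides $T$ acts by $-1$ and $\phi$ is $\Z$-linear. So an isometry $b \cong b'$ induces an isometry $b^{nd}_{ext} \cong (b')^{nd}_{ext}$ of $\Z[\Z_2]$-modules with forms.

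The second step is compatibility with sums, using the second item of Example~\ref{ex:ExteriorForm}: $(b \oplus H)^{nd}_{ext} \cong b^{nd}_{ext} \oplus H$. To justify it, write an element as $x+y$ with $x \in H$ and $y$ in the hyperbolic summand. Then $(b\oplus H)(x+y,x+y) = b(x,x) + H(y,y)$, and $H(y,y)$ is even because the hyperbolic form is even. So $x+y$ lies in the exterior subgroup exactly when $b(x,x)$ is even, i.e. the subgroup is $H^{nd}_{ext} \oplus H(\Z)$. Iterating gives
$$\bigl((H,b)\oplus H(\Z)^{\oplus n}\bigr)^{nd}_{ext} = b^{nd}_{ext} \oplus H(\Z)^{\oplus n},$$
where the hyperbolic summand now carries the $\Z[\Z_2]$-structure with $T=-1$.

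Finally, apply the first step to $\phi$ and use the second step on both sides:
$$b^{nd}_{ext} \oplus H(\Z)^{\oplus n} \cong (b')^{nd}_{ext} \oplus H(\Z)^{\oplus n}.$$
Here "stabilisation" has to be read as adding copies of the hyperbolic form on $\Z_-$ ($T=-1$) rather than on $\Z[\Z_2]$, since the exterior forms are $\Z_-$-modules. The main point to check is that this $T$-twisted hyperbolic summand is the intended meaning of stably isometric for these forms; with that reading the argument is complete. There is no real obstacle here; the only subtlety is the parity observation for hyperbolic summands.
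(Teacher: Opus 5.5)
Your proof is correct and follows the paper's argument: apply the second item of Example~\ref{ex:ExteriorForm} to both sides of the stable isometry, and note that any $\Z$-isometry of the exterior forms is automatically $\Z[\Z_2]$-linear because $T$ acts by $-1$. You also spell out two things the paper leaves implicit: an isometry preserves squares mod $2$ and so restricts to the exterior subgroups, and stabilisation here means adding copies of $H(\Z_-)$, which is how the result is used in Proposition~\ref{prop:alpha-}.
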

\begin{proof}
Assume that~$b$ and~$b'$ are stably isometric, say~$b \oplus H \cong b' \oplus H'$ for some hyperbolic forms~$H$ and~$H'$.
It follows from the second item of Example~\ref{ex:ExteriorForm} that
$$b_{ext}^{nd} \oplus H
\cong (b \oplus H)_{ext}^{nd} 
\cong (b' \oplus H')_{ext}^{nd}
\cong (b')_{ext}^{nd} \oplus H'.$$
Here,  note that since the exterior forms are defined using the module structure induced by~$Tx=-x$ for every~$x$, this~$\Z$-isometry is automatically a~$\Z[\Z_2]$-isomorphism on the underlying~$\Z[\Z_2]$-modules.
This concludes the proof of the proposition.
\end{proof}

\section{Pullbacks of hermitian forms over~$\Z[\Z_2]$.}
\label{sec:Pullback}

This section is concerned with pullbacks.
Section~\ref{sub:Pullbacks} recalls a pullback construction due to Hambleton-Riehm~\cite{HambletonRiehm} in the setting of hermitian forms over~$\Lambda:=\Z[\Z_2]$.
Section~\ref{sub:PullbackSurfaceExterior} applies this construction to equivariant intersection forms of $\Z_2$-surface exteriors.

\subsection{Pullbacks}
\label{sub:Pullbacks}
We begin with some notation.

\begin{notation}
Given a~$\Lambda$-module~$M$,  consider the abelian groups~$M_+:= \{x\in M\mid Tx=x\}$ and~$M_-:=\{x\in M\mid Tx=-x\}$.
When $M$ has no $2$-torsion,  it fits into the pullback square
$$
\xymatrix@R0.4cm@C0.4cm{
	M \ar[r]\ar[d]& M/M_+\ar[d]. \\
M/M_- \ar[r]& M/(M_+,M_-):=M_2.
}
$$
When~$M=\Lambda$, we write~$\Z_\pm:=\Lambda/\Lambda_\mp=\Lambda/(1\mp T)$ for the abelian group $\Z$ endowed with the~$\Z_2$-action given by~$Tx=\pm x$ for every~$x \in \Z$.
The previous pullback square then reduces to 
$$
\xymatrix@R0.4cm@C0.4cm{
\Lambda \ar[r]\ar[d]& \Z_-\ar[d]. \\
\Z_+ \ar[r]& \Z_2.
}
$$
We also write~$\Lambda^\Q:=\Q[\Z_2]$ so that for~$M^\Q:=M \otimes_\Z \Q$,  the quotient maps induce an isomorphism
$$M^\Q \xrightarrow{\cong} M^\Q/M_-^\Q \oplus M^\Q/M_+^\Q.$$
In particular,  taking $M=\Lambda$, we record the isomorphism
\begin{align*}
\Lambda^\Q &\xrightarrow{\cong} 
\frac{\Lambda^\Q}{(1-T)} \oplus \frac{\Lambda^\Q}{(1+T)}=:\Q_+\oplus \Q_-.
\end{align*}
The inverse of this map is given by~$([a],[b]) \mapsto ((1+T)a+(1-T)b)/2.$
\end{notation}

Next, we move on from pullbacks of modules to pullbacks of hermitian forms.  In what follows, we will assume that~$M$ is torsion-free as an abelian group so that a theorem of Reiner ensures that~$M$ decomposes as a direct sum of~$\Z_+,\Z_-$ and~$\Lambda$-summands~\cite{Reiner}.

\begin{construction}\label{con:plusminusforms}
To a hermitian form~$\lambda \colon M \times M \to \Lambda$ on a $\Lambda$-module $M$ that is $\Z$-torsion-free as an abelian group,  we associate symmetric bilinear forms
\begin{align*}
&\lambda_+ \colon M/M_- \times M/M_- \to \Lambda/\Lambda_- \cong \Z_+, \\
&\lambda_- \colon M/M_+ \times M/M_+ \to \Lambda/\Lambda_+ \cong \Z_-,\ \text{and} \\
&\lambda_2 \colon M/(M_-,M_+) \times M/(M_-,M_+) \to \Lambda/(\Lambda_-,\Lambda_+)  \cong \Z_2.
\end{align*}
Tensor the form~$(M,\lambda)$ by~$\Q$ and denote the outcome by~$(M^\Q,\lambda^\Q)$.
This form is~$\Lambda^\Q:=\Q[\Z_2]$-valued.
The aforementioned quotient-induced isomorphism then leads to a decomposition
$$(M^\Q,\lambda^\Q) 
\cong 
(M^\Q/M_-^\Q,\lambda^\Q_+)
 \oplus 
 (M^\Q/M_+^\Q,\lambda^\Q_-) .$$
The situation is summarised by the following commutative diagram:
$$
\xymatrix@R0.5cm@R0.8cm{
M^\Q \times M^\Q  \ar[r]^{\lambda^\Q}\ar[d]^\cong& \Lambda^\Q \ar[d]^\cong \\
(M^\Q/M_-^\Q \oplus M^\Q/M_+^\Q)\times (M^\Q/M_-^\Q \oplus M^\Q/M_+^\Q)  \ar[r]^-{(\lambda^\Q_+ \ \lambda^\Q_-)}& \Q_+ \oplus \Q_-.
}
$$
Write~$\proj_{\pm} \colon M \to M/M_\pm$ for the canonical projections and set~$x_{\pm}:=\proj_{\mp}(x)$.
Since~$M$ is~$\Z$-torsion-free, restricting the pair of forms~$(\lambda^\Q_+,\lambda^\Q_-)$ to~$M/M_- \oplus M/M_+$ then leads to a pair~$(\lambda_+,\lambda_-)$ of $\Z$-valued forms that are related to $\lambda$ as follows:
\begin{equation}
\label{eq:HR1}
\lambda(x,y)=(\lambda_+(x_+,y_+),\lambda_-(x_-,y_-)).
\end{equation}
These forms descend to~$M_2:=M/(M_-,M_+)$ and induce a symmetric bilinear form on $M_2$.
\begin{equation}
\label{eq:HR2}
\lambda_2(x,y):=
[\lambda_+([x_+],[y_+])]
=[\lambda_-([x_-],[y_-])]
\in \Lambda/(\Lambda_-,\Lambda_+) \cong \Z_2.
\end{equation}
\end{construction}

We briefly comment on the notation used in~\eqref{eq:HR1}.

\begin{remark}
In~\eqref{eq:HR1}, we followed the notation of Hambleton-Riehm but it it is worth spelling out the meaning of this equation for later use. Indeed, the left hand side belongs to $\Lambda$ whereas the right hand side belongs to
$$ \Lambda_{pull}:=\left\lbrace
\begin{pmatrix} a \\ b \end{pmatrix} \in \frac{\Lambda}{(1-T)} \oplus \frac{\Lambda}{(1+T)} \mid \proj_{+}(a)=\proj_{-}(b) \right\rbrace.$$
The projections $\Lambda \to \Lambda/(1\pm T)$ are seen to induce an isomorphism $\Lambda \xrightarrow{\cong} \Lambda_{pull}$ whose inverse is given by~$(a,b) \mapsto ((1+T)a+(1-T)b)/2$; here the numerator of this expression is even because it can be written as $(a+b)+T(a-b)$, where~$a \pm b$ is even thanks to the condition $\proj_{+}(a)=\proj_{-}(b) $.
In particular, we record for later use that~\eqref{eq:HR1} can be rewritten as
\begin{equation}
\label{eq:HR1Rewrite}
\lambda(x,y)=\frac{(1+T)\lambda_+(x_+,y_+)+(1-T)\lambda_-(x_-,y_-)}{2}.
\end{equation}
\end{remark}

The following result is due to Hambleton and Riehm~\cite[Theorems 3 and 6]{HambletonRiehm}.
\begin{theorem}
\label{thm:HambletonRiehmTheorems36}
Fix a~$\Z$-torsion-free module~$\Lambda $-module~$M$ with associated pullback square
$$
\xymatrix@R0.4cm@C0.4cm{
	M \ar[r]^{\proj_{+}}\ar[d]_{\proj_{-}}& M/M_+\ar[d]. \\
M/M_- \ar[r]& M/(M_+,M_-):=M_2.
}
$$
The following assertions hold:
\begin{itemize}
\item Given a hermitian form~$(M,\lambda)$, there are unique symmetric bilinear forms~$(M/M_-,\lambda_+)$ and $(M/M_+,\lambda_-)$ satisfying~\eqref{eq:HR1} and a unique symmetric bilinear form~$(M_2,\lambda_2)$ satisfying~\eqref{eq:HR2}.
\item Given symmetric bilinear forms~$(M/M_-,\lambda_+),(M/M_+,\lambda_-),(M_2,\lambda_2)$ satisfying~\eqref{eq:HR2},  there exists a unique hermitian form~$\lambda$ on~$M$ satisfying~\eqref{eq:HR1}.
\end{itemize}
\end{theorem}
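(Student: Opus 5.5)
The plan is to prove both items directly from the arithmetic of the pullback $\Lambda \cong \Lambda_{pull}$, working over $\Q$ first and then checking integrality. Because $M$ is $\Z$-torsion-free, $M$ embeds in $M^\Q$. Similarly, $M/M_\pm$ embeds in $M^\Q/M^\Q_\pm$; this uses that $M_\pm$ is saturated in $M$, since $nx \in M_\pm$ implies $x\in M_\pm$. The pullback square of modules then identifies $M$ with $\{(u,v)\in M/M_-\oplus M/M_+ : [u]=[v]\in M_2\}$. I would verify this exactly as for $\Lambda$: given $u,v$ with equal images in $M_2$, choose lifts $y,z\in M$; their difference lies in $M_++M_-$, so we may correct $y$ by an element of $M_+$ and $z$ by an element of $M_-$ until the lifts agree. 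Injectivity follows from $M_+\cap M_-=0$.

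For the first item, I would define $\lambda_+,\lambda_-$ by composing $\lambda$ with the ring maps $\Lambda\to\Z_\pm$. Concretely, $\lambda_+(x_+,y_+):=\proj(\lambda(x,y))\in \Lambda/(1-T)$, and analogously for $\lambda_-$. The key point is that these are well defined on the quotients. If $x\in M_-$, then $\lambda(x,y)=\lambda(-Tx,y)$. By sesquilinearity this equals $-T\lambda(x,y)$ or its conjugate, and its image in $\Z_+$ is therefore minus itself, hence $0$. The same argument applies in the other variable and for $\lambda_-$. Symmetry follows from hermitianity, because the involution acts trivially on $\Z_\pm$. Equation~\eqref{eq:HR1} then holds by construction under $\Lambda\cong\Lambda_{pull}$. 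Uniqueness holds because $\Lambda\to\Lambda_{pull}$ is an isomorphism and $\proj_\mp$ is surjective. Finally, $\lambda_2$ is the common reduction in $\Z_2$, and it is well defined on $M_2$: the elements of $M$ that generate $M_+ + M_-$ pair to zero under $\lambda_+$ or $\lambda_-$ after reduction. This again uses the computation above together with the fact that $\lambda_\pm$ agree mod $2$, since $\lambda(x,y)\in\Lambda_{pull}$.

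For the second item, I would define $\lambda(x,y)$ by formula~\eqref{eq:HR1Rewrite}, that is, as the element of $\Lambda_{pull}$ given by the pair $(\lambda_+(x_+,y_+),\lambda_-(x_-,y_-))$. The compatibility condition~\eqref{eq:HR2} says precisely that the two components agree in $\Z_2$, so the pair does lie in $\Lambda_{pull}\cong\Lambda$. It remains to check that $\lambda$ is sesquilinear and hermitian. Additivity is clear. For $T$-linearity, $(Tx)_\pm=\pm x_\pm$, which matches multiplication by $T$ in $\Lambda_{pull}$, where $T$ acts as $(1,-1)$. Hermitian symmetry comes from the symmetry of $\lambda_\pm$ together with the fact that conjugation on $\Lambda_{pull}$ is the identity on components. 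Uniqueness is again immediate from the isomorphism $\Lambda\cong\Lambda_{pull}$.

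I expect the main obstacle to be the well-definedness of $\lambda_2$ on $M_2$ in the first item, together with the hidden torsion-freeness and saturation issues. I would handle these by passing through $M^\Q$ as in Construction~\ref{con:plusminusforms}. I would also check on Reiner's decomposition into $\Z_+$, $\Z_-$ and $\Lambda$ summands that the restricted forms $\lambda_\pm$ really are $\Z$-valued on the lattices $M/M_\mp$.
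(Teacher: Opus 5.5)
Your argument is correct, and it takes a different route from the paper. The paper gives no proof: it cites Hambleton--Riehm [Theorems 3 and 6], having set up $\lambda_\pm$ in Construction~\ref{con:plusminusforms} by splitting $(M^\Q,\lambda^\Q)$ along $\Lambda^\Q\cong \Q_+\oplus\Q_-$ and restricting to the lattices $M/M_\mp$.

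You instead define $\lambda_\pm$ by pushing $\lambda$ along the ring maps $\Lambda\to\Z_\pm$. You check descent directly from $(1+T)\lambda(x,y)=0$ for $x\in M_-$, and $(1-T)\lambda(x,y)=0$ for $x\in M_+$. Descent of $\lambda_2$ then follows from the congruence $\lambda_+(x_+,y_+)\equiv\lambda_-(x_-,y_-) \pmod 2$ on $M\times M$. The second item needs only the ring isomorphism $\Lambda\cong\Lambda_{pull}$, the rule $(Tx)_\pm=\pm x_\pm$, and the triviality of the involution on $\Z[\Z_2]$.

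This gives a short, self-contained proof in which integrality of $\lambda_\pm$ is automatic. So the worries in your last paragraph are moot: you need no rationalisation, no saturation of $M_\pm$, and no Reiner decomposition. Uniqueness also shows your $\lambda_\pm$ coincide with those of Construction~\ref{con:plusminusforms}. Likewise, the module pullback $M\cong M/M_-\times_{M_2}M/M_+$ is not used for this statement; the paper uses it only in Proposition~\ref{prop:pullback_isometry}. If you keep it, fix one slip: a lift of $u\in M/M_-$ may only be corrected by elements of $M_-$, and a lift of $v\in M/M_+$ by elements of $M_+$, not the reverse as you wrote.

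The paper's citation, for its part, buys brevity and places the statement within Hambleton--Riehm's general treatment of hermitian forms over group rings via pullbacks. The rational set-up is also what the paper then uses for the computations in Examples~\ref{ex:Hyperbolic} and~\ref{ex:directsum}.
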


Since the form $(M_2,\lambda_2)$ is determined by $(M/M_+,\lambda_-),(M/M_-,\lambda_+)$ and~\eqref{eq:HR2}, we will frequently omit it from the notation.

\begin{example}
\label{ex:Hyperbolic}
Let $H(\Gamma)$ denote the standard hyperbolic form over $\Gamma$ where $\Gamma=\Z[\Z_2], \Z_-,\Z_+$ or~$\Z_2$.
We prove that the pullback of the hyperbolic forms~$H(\Z_+)$ and~$H(\Z_-)$ is the hyperbolic form~$H(\Z[\Z_2])$.
By Theorem~\ref{thm:HambletonRiehmTheorems36},  it suffices to prove that for~$\lambda:=H(\Z[\Z_2])$,  the~$\pm$ and~$2$-forms are respectively~$\lambda_{\pm}=H(\Z_{\pm})$ and~$\lambda_2=H(\Z_2)$.
This follows readily from the definition and from the fact that the quotient-induced isomorphism~$\Lambda^\Q \cong
\frac{\Lambda^\Q}{(1-T)} \oplus \frac{\Lambda^\Q}{(1+T)}$ maps~$1$ to~$([1],[1])$ and~$0$ to~$0$.
\end{example}

\begin{example}
	\label{ex:directsum}
	We prove that the pullback of a direct sum is the direct sum of the pullbacks. 
	 Namely, we prove that if~$(M,\lambda),(M/M_{\mp},\lambda_{\pm}),(M_2,\lambda_2)$ and~$(N,\kappa),(N/N_{\mp},\kappa_{\pm}),(N_2,\kappa_2)$ form two pullback squares, then the following is a pullback:
	\[
	\begin{tikzcd}
		(M,\lambda)\oplus (N,\kappa) \ar[r] \ar[d] & (M/M_+,\lambda_-)\oplus (N/N_+,\kappa_-) \ar[d] \\
		(M/M_-,\lambda_+)\oplus (N/N_-,\kappa_+) \ar[r] & (M_2,\lambda_2)\oplus (N_2,\kappa_2).
	\end{tikzcd}
	\]
Here,  we have used the canonical isomorphism $(M\oplus N)/(M\oplus N)_{\pm}\cong (M/M_{\pm})\oplus (N/N_{\pm})$.  By Theorem~\ref{thm:HambletonRiehmTheorems36}, it suffices to prove that $(\lambda\oplus\kappa)_{\pm}=\lambda_{\pm}\oplus\kappa_{\pm}$.  This follows from the fact that in this case the commutative diagram in Construction~\ref{con:plusminusforms} splits as a direct sum of diagrams.
\end{example}

The output of Example~\ref{ex:Hyperbolic} and Example~\ref{ex:directsum} is that the stabilisation of a pullback is the pullback of the stabilisations (apply Example~\ref{ex:directsum} with~$N=H(\Z[\Z_2])$ and use Example~\ref{ex:Hyperbolic} to compute~$N_{\pm}$ and~$N_2$).  This is one of the main facts that we will use in the proof of Theorem~\ref{thm:precise}.

We prove a condition on how to achieve an isometry~$(M,\lambda)\to (M',\lambda')$ of two pullback forms.

\begin{proposition}
\label{prop:pullback_isometry}
	Let~$(M,\lambda)$ and~$(M',\lambda')$ be two pullback forms and fix isometries
	\begin{align*}
		&\alpha_- \colon (M/M_+,\lambda_-) \xrightarrow{\cong} (M'/M'_+,\lambda'_-)
		&\alpha_+ \colon (M/M_-,\lambda_+) \xrightarrow{\cong} (M/M'_-,\lambda'_+).
	\end{align*}
Assume that $\alpha_-$ and $\alpha_+$ induce maps on $M/(M_-,M_+)$.
	If~$\alpha_+$ and~$\alpha_-$ agree on~$M/(M_-,M_+)$, then there exists an isometry~$\alpha\colon (M,\lambda)\to (M',\lambda')$.
\end{proposition}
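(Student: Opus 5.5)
The isometry comes from the universal property of the pullback square of $\Lambda$-modules, with Theorem~\ref{thm:HambletonRiehmTheorems36} used to see that it preserves the forms. Since $M$ is $\Z$-torsion-free, the square
$$
\xymatrix@R0.4cm@C0.4cm{
	M \ar[r]\ar[d]& M/M_+\ar[d] \\
M/M_- \ar[r]& M_2
}
$$
is a pullback of abelian groups, and likewise for $M'$. Concretely, $M$ is identified with the set of pairs $(a,b) \in M/M_- \oplus M/M_+$ whose images in $M_2$ agree.

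First I define $\alpha \colon M \to M'$ by $\alpha(x) := (\alpha_+(x_+),\alpha_-(x_-))$, using this identification for $M'$. This lands in $M'$ precisely because $\alpha_+$ and $\alpha_-$ induce the same map $\alpha_2$ on $M/(M_-,M_+)$. Indeed, $x_+$ and $x_-$ have the same image in $M_2$, so $\alpha_+(x_+)$ and $\alpha_-(x_-)$ have the same image $\alpha_2([x])$ in $M'_2$.

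Second, I check that $\alpha$ is a bijective $\Lambda$-homomorphism. Additivity is clear. Since $T$ acts by $+1$ on $M/M_-$ and by $-1$ on $M/M_+$, the element $T$ acts on pairs by $(a,b)\mapsto(a,-b)$, and $\alpha_\pm$ commute with this action, so $\alpha$ is $\Lambda$-linear. Applying the same construction to $\alpha_+^{-1},\alpha_-^{-1}$, which also agree on $M'_2$ because $\alpha_2$ is then a bijection, gives an inverse. The point needing care is whether $\alpha_2$ being induced by both maps makes it invertible. It does: $\alpha_\pm$ are isomorphisms, and $M_2$ is the quotient of $M/M_\mp$ by the image of $M_\pm$. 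Moreover, $\alpha_\pm$ carry that image onto the corresponding one in $M'$, since these images coincide with the kernel of $M/M_\mp \to M_2$, which is preserved because $\alpha_2$ is well defined. Running the same argument for the inverses shows that $\alpha_2$ is bijective.

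Third, I check that $\alpha$ is an isometry. By construction $\alpha(x)_\pm = \alpha_\pm(x_\pm)$. Equation~\eqref{eq:HR1Rewrite} then gives
$$\lambda'(\alpha x,\alpha y)=\tfrac{(1+T)\lambda'_+(\alpha_+x_+,\alpha_+y_+)+(1-T)\lambda'_-(\alpha_-x_-,\alpha_-y_-)}{2}=\tfrac{(1+T)\lambda_+(x_+,y_+)+(1-T)\lambda_-(x_-,y_-)}{2}=\lambda(x,y).$$
Equivalently, the pulled-back form $\lambda'(\alpha\cdot,\alpha\cdot)$ has $\pm$-forms $\lambda_\pm$, so it equals $\lambda$ by the uniqueness in Theorem~\ref{thm:HambletonRiehmTheorems36}.

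The main obstacle is only bookkeeping. One has to verify that $M \to M/M_- \times_{M_2} M/M_+$ is an isomorphism; this is the pullback statement from the Notation, and it holds since $M$ has no $2$-torsion. One also has to confirm the identification of the kernel used in the second step.
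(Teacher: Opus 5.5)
Your route is the paper's: build $\alpha$ from the universal property of the module pullback, then check that it preserves forms via \eqref{eq:HR1Rewrite}, or equivalently via uniqueness in Theorem~\ref{thm:HambletonRiehmTheorems36}. That final computation is fine. The gap is your claim that the common induced map $\alpha_2\colon M_2\to M'_2$ is automatically bijective. Let $K,K'$ be the kernels of $M/M_\mp\to M_2$ and $M'/M'_\mp\to M'_2$. Well-definedness of $\alpha_2$ only gives $\alpha_\pm(K)\subseteq K'$. ``Running the same argument for the inverses'' presupposes that $\alpha_\pm^{-1}$ descend to $M'_2$, i.e.\ that $K'\subseteq\alpha_\pm(K)$, which is exactly what is in question.

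The claim is in fact false. Take $M=\Lambda$ with $\lambda(1,1)=2$, and $M'=\Z_+\oplus\Z_-$ with generators $e,f$ and $\lambda'=(1+T)\oplus(1-T)$. Then $\lambda_\pm=(2)=\lambda'_\pm$, so $\alpha_\pm=\id$ are isometries, and both induce the zero map $M_2=\Z_2\to M'_2=0$. Your $\alpha$ is $1\mapsto e+f$. It preserves the forms and is injective, but its image has index $2$. No isometry can exist at all, because $\Z_+\oplus\Z_-$ is not a cyclic $\Lambda$-module. So the statement as written needs an extra hypothesis, and your argument cannot derive it.

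The repair is easy, and your argument then goes through. The map $\alpha$ is always injective, since $M_+\cap M_-=0$. It is surjective as soon as $\alpha_2$ is injective. Indeed, for $(a',b')\in M'$, $\alpha_2$ sends the classes of $\alpha_+^{-1}a'$ and $\alpha_-^{-1}b'$ to the common class of $a'$ and $b'$. So if $\alpha_2$ is injective, the pair $(\alpha_+^{-1}a',\alpha_-^{-1}b')$ lies in $M$.

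Since $\alpha_2$ is always surjective (because $\alpha_-$ is), it suffices to assume either that $\alpha_2$ is an isomorphism, or that $M_2$ and $M'_2$ are finite of equal order. Both hold where the proposition is used. Proposition~\ref{prop:alpha-Fix} produces $\alpha_2$ as an isometry of $(\Z_2^{h-1},0)\oplus H(\Z_2)^{\oplus 2}$, and Proposition~\ref{prop:alpha+} lifts that same $\alpha_2$. The paper's own proof passes over the same point when it asserts that the universal property yields an isomorphism. You should therefore state this hypothesis explicitly rather than try to prove it.
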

\begin{proof}
Use the given isometries and apply the universal property of module pullbacks to obtain an isomorphism $\alpha$ fitting into the diagram
	\[
	\begin{tikzcd}
		(M,\lambda) \arrow[ddr,"\alpha_+\circ \proj_-"', bend right] \arrow[rrd,"\alpha_-\circ \proj_+", bend left] \arrow[dr,dashed, "\alpha"] & & \\
		& (M',\lambda') \arrow[d,"\proj_-"] \arrow[r,"\proj_+"] & (M'/M'_+,\lambda'_-) \ar[d] \\
		& (M'/M'_-,\lambda'_+) \arrow[r] & (M'_2,\lambda'_2).
	\end{tikzcd}
	\]
Since $\alpha_{\pm}$ are isometries, one can verify that the pushforward form~$\alpha_*(\lambda):=\lambda(\alpha(-),\alpha(-))$  is a form on $M'$ satisfying properties~\eqref{eq:HR1} and~\eqref{eq:HR2}.  
Hence, by the second item of Theorem \ref{thm:HambletonRiehmTheorems36}, $\alpha_*(\lambda)=\lambda'$, and so $\alpha$ is the desired isometry.
\end{proof}

\subsection{Pullbacks and equivariant intersection forms}
\label{sub:PullbackSurfaceExterior}

The goal of this section is to apply the pullback construction to equivariant intersection forms of $\Z_2$-surface exteriors.

In what follows,  recall that we denote the exterior of a $\Z_2$-surface $F \subset S^4$ by $X_F$.
Note that since the universal cover~$\widetilde{X}_F$ of~$X_F$ is simply-connected, the $\Z[\Z_2]$-module~$H_2(\widetilde{X}_F)$ is torsion-free as an abelian group, as required by the pullback construction.
Recall that the intersection form~$Q_{\widetilde{X}_F}$ on~$H_2(\widetilde{X}_F)$ descends to a non-degenerate symmetric bilinear form~$Q_{\widetilde{X}_F}^{nd}$ on~$H_2(\widetilde{X}_F)/\rad$.

\begin{proposition}
\label{prop:PullbackSurfaceExterior}
Given a $\Z_2$-surface $F \subset S^4$ of non-orientable genus $h$,  the pullback square associated to the hermitian form $(H,\lambda):=(H_2(\widetilde{X}_F),\lambda_{X_F})$ is 

$$
\xymatrix@R0.4cm@C0.4cm{
(H,\lambda)\ar[r]\ar[d]& (H/H_+,2Q_{\widetilde{X}_F}^{nd})\ar[d] \\
(H/H_-,0)\ar[r]& (H/(H_+,H_-),0).
}
$$
Here $H_+=\rad(\lambda)$ and, in addition,  there is an isomorphism $H \cong \Z_- \oplus \Z[\Z_2]^{h-1}$ which induces isomorphisms
$$
H/H_+ \cong \Z_-^h,  \quad  \quad H/H_- \cong \Z_+^{h-1}, \quad \text{ and } \quad H/(H_-,H_+) \cong \Z_2^{h-1}.
$$
	\end{proposition}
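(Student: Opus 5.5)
We first determine the $\Lambda$-module structure of $H:=H_2(\widetilde{X}_F)$, and then read off the three forms $\lambda_\pm,\lambda_2$ from Construction~\ref{con:plusminusforms}.

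\emph{Module structure.} Since $H$ is $\Z$-torsion-free, Reiner's theorem gives $H\cong \Z_+^a\oplus\Z_-^b\oplus\Lambda^c$. We pin down $a,b,c$ from homological data of $X_F$. We have $\pi_1(X_F)=\Z_2$, $H_1(X_F)=\Z_2$, and $\chi(X_F)=\chi(S^4)-\chi(F)=2-(2-h)=h$. Since $\widetilde{X}_F$ is simply connected with $H_3(\widetilde X_F)=0$ (as noted in Section~\ref{sec:StableRealisation}), we get $\operatorname{rk}_\Z H=2h-2$.

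The radical is $\rad=\im(H_2(\partial\widetilde X_F)\to H)\cong H_2(\partial \widetilde X_F)$. Here $\partial\widetilde X_F$ is the circle bundle over the orientation double cover of $F$, a surface of genus $h-1$, so this radical has rank $1$. The deck transformation must act on it by $+1$, since $T$ preserves the fibre class up to orientation considerations. On the non-degenerate quotient, Example~\ref{ex:ExteriorForm} identifies $H/\rad$ with $H^{nd}_{ext}\subset H_2(\Sigma_2(F))$. The covering involution acts on $H_2(\Sigma_2(F))$ by $-1$, because $H_2(\Sigma_2(F);\Q)^{\Z_2}=H_2(S^4;\Q)=0$. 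Hence $T=-1$ on $H/\rad$, which has rank $h-1$ after the correct count; we reconcile the ranks via $\operatorname{rk}H_{ext}^{nd}=b_2(\Sigma_2(F))=h-1$.

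Rationally, then, $H^\Q\cong \Q_+\oplus\Q_-^{?}$ with $\dim H^\Q_+ = h-1$ and $\dim H^\Q_-=h-1$, so $a+c=h-1$ and $b+c=h-1$. This leaves open the split between $\Lambda$, $\Z_+$ and $\Z_-$ summands, and the statement asserts it is $\Z_-\oplus\Lambda^{h-1}$. To fix $a,b,c$ integrally, we compute group cohomology $\hat H^*(\Z_2;H)$, using $\hat H^0(\Z_2;\Z_+)=\Z_2$, $\hat H^1(\Z_2;\Z_-)=\Z_2$ and the vanishing for $\Lambda$. We compare with the Cartan--Leray spectral sequence for $\widetilde X_F\to X_F$, using the known $H_*(X_F;\Z)$ and $H_*(X_F;\Z_-)$ coming from Alexander duality. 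This is the main obstacle: the bookkeeping of Tate cohomology against the spectral sequence, and reconciling the rank discrepancy above.

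If this bookkeeping gets stuck, we fall back on citing \cite[Proposition~5.10 and its proof]{ConwayOrsonPowell}, which computes $\pi_2(X_F)$ for $\Z_2$-surfaces via a handle or chain-level description. Alternatively, we reduce to the unknot $U$ by noting that the module structure depends only on data shared by all $\Z_2$-surfaces of genus $h$, namely the chain homotopy type of $C_*(\widetilde X_F)$ up to stabilisation.

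\emph{Forms.} Once $H\cong\Z_-\oplus\Lambda^{h-1}$ is established, we read off $H_+=\Lambda_+^{h-1}$ and $H_-=\Z_-\oplus\Lambda_-^{h-1}$. The stated quotients then follow directly: $H/H_+\cong \Z_-\oplus\Z_-^{h-1}$, $H/H_-\cong\Z_+^{h-1}$, and $H/(H_+,H_-)\cong\Z_2^{h-1}$. The identification $H_+=\rad(\lambda)$ follows because the radical is exactly the part invariant under $T$, by the eigenvalue discussion above together with saturation of both subgroups.

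For the forms, note that $\lambda(x,y)=\sum_g Q(x,gy)g$. On $H^\Q$ the decomposition of Construction~\ref{con:plusminusforms} gives the following. For $x,y\in H_+^\Q$ the form $\lambda^\Q$ is zero, since $H_+=\rad$; hence $\lambda_+=0$. For $x,y$ in the $(-1)$-eigenspace we get $\lambda(x,y)=Q(x,y)(1-T)$, which corresponds to $([0],[2Q(x,y)])$ under $\Lambda^\Q\cong\Q_+\oplus\Q_-$; hence $\lambda_-=2Q^{nd}$ on $H/H_+$. Finally, $\lambda_2=0$ by \eqref{eq:HR2}, since $\lambda_+=0$.
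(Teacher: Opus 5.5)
There is a genuine gap. The decomposition $H\cong\Z_-\oplus\Lambda^{h-1}$ is part of the statement, and your identification $H_+=\rad$ and all the quotient computations depend on it, but you never establish it.

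You announce a Tate cohomology computation without carrying it out, and the numbers you prepare for it are wrong:
\begin{itemize}
\item Since $\chi(\widetilde X_F)=2\chi(X_F)=2h$ and $H_1,H_3,H_4$ of $\widetilde X_F$ vanish, $\operatorname{rk}H=2h-1$, not $2h-2$.
\item Since $\chi(\Sigma_2(F))=2\chi(S^4)-\chi(F)=h+2$, you have $b_2(\Sigma_2(F))=h$. So $H/\rad\cong H^{nd}_{ext}$ has rank $h$, as the statement itself says through $H/H_+\cong\Z_-^h$, not $h-1$.
\item Hence $\dim H^\Q_+=h-1$ and $\dim H^\Q_-=h$, i.e.\ $a+c=h-1$ and $b+c=h$. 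The ``rank discrepancy'' comes only from these slips.
\end{itemize}

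Your description of the radical is also wrong. The meridian maps nontrivially to $\Z_2$, so $\partial\widetilde X_F$ is a circle bundle over $F$ itself (each meridian circle is replaced by its connected double cover), not over the orientation double cover. The fibre is torsion in $H_1(\partial\widetilde X_F)$, so $\rad\cong H_2(\partial\widetilde X_F)\cong H^1(\partial\widetilde X_F)\cong\Z^{h-1}$. Thus $\rad$ is not spanned by a fibre class, and your reason for $T=+1$ on $\rad$ does not stand.

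As for your fallbacks, citing is legitimate; the paper does exactly that. But the module structure is \cite[Proposition~4.12]{ConwayOrsonPowell}, and Proposition~5.12 there gives $\rad\cong\Z_+^{h-1}$, $H/\rad\cong\Z_-^h$ and $\lambda^{nd}=(1-T)Q^{nd}$. Proposition~5.10 is only the identification of $H/\rad$ with $H^{nd}_{ext}$ used in Example~\ref{ex:ExteriorForm}. The reduction to the unknot via ``the chain homotopy type up to stabilisation'' is asserted with no justification.

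Both defects can be repaired along your own lines.
\begin{itemize}
\item The Tate computation needs no Alexander duality. $\widetilde X_F$ is simply connected with $H_j(\widetilde X_F)=0$ for $j\geq 3$. Take a finite free $\Z[\Z_2]$-chain complex $C_*$ of $\widetilde X_F$ and dimension shift, first through $0\to B_2\to Z_2\to H\to 0$ (where $B_2$ has the finite free resolution $C_{\geq 3}$), then through $0\to Z_2\to C_2\to C_1\to C_0\to\Z\to 0$. This gives $\hat H^i(\Z_2;H)\cong\hat H^{i-3}(\Z_2;\Z)$, so $\hat H^0(\Z_2;H)=0$ and $\hat H^1(\Z_2;H)\cong\Z_2$. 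That forces $a=0$ and $b=1$, and the corrected rank gives $c=h-1$.
\item You do not need $T=+1$ on $\rad$ at all. Your transfer argument shows $T=-1$ on the torsion-free group $H/\rad$, whence $H_+\subseteq\rad$. Both have rank $h-1$ and $H/H_+\cong\Z_-^h$ is torsion-free, so $H_+=\rad$.
\end{itemize}

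With these repairs your computation of the forms is correct: the cross terms vanish, $\lambda(x,y)=(1-T)Q(x,y)$ for $x,y\in H_-$, hence $\lambda_-=2Q^{nd}_{\widetilde X_F}$, $\lambda_+=0$ and $\lambda_2=0$. It is somewhat more self-contained than the paper's route. The paper imports the formula for $\lambda^{nd}$ from \cite{ConwayOrsonPowell} and deduces $\rad=H_+$ from the surjection $H/\rad\twoheadrightarrow H/H_+$ between two copies of $\Z^h$. It then checks \eqref{eq:HR1} and \eqref{eq:HR2} and invokes the uniqueness part of Theorem~\ref{thm:HambletonRiehmTheorems36}.
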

	\begin{proof}
Consider the projections~$\proj \colon H \to H/\rad$ and~$\proj_{\pm} \colon H \to H/H_\pm$.

We assert that~$\rad=H_+$ so that~$\proj_{+}=\proj$.
First, the inclusion~$\rad \subset H_+$ follows because~$\rad(\lambda_{X_F})=H_2(\partial \widetilde{X}_F) \cong \Z_+^{h-1}$; see e.g.~\cite[proof of Proposition 5.12]{ConwayOrsonPowell}.
We therefore obtain the following commutative diagram:
$$
\xymatrix@C1cm{
0\ar[r]&\rad\ar[r]\ar[d]^{\subset}&H\ar[r]^-{\proj}\ar[d]^=&H/\rad \ar[r]\ar@{->>}[d]&0\\
0\ar[r]&H_+\ar[r]&H\ar[r]^-{\proj_{+}}&H/H_+ \ar[r]&0.
}
$$
Since~\cite[Proposition 4.12]{ConwayOrsonPowell} ensures that~$H\cong \Z_- \oplus \Z[\Z_2]^{h-1}$,  we get~$H/H_+ \cong \Z_-^h$.
Using that~$H/\rad \cong \Z_-^h$ (thanks to~\cite[Proposition 5.12]{ConwayOrsonPowell}),  we deduce that the right hand vertical map is a quotient map from~$\Z^h_-$ that doesn't change the isomorphism type, and hence the quotient is trivial i.e.~$H_+\sm \rad=\emptyset$.
Hence we have that~$\rad=H_+$, concluding the proof of the assertion.

\begin{claim}
\label{claim:lambdaXF}
For $x,y \in H$, the following equality holds:
$$
\lambda_{X_F}(x,y)=(1-T)Q_{\widetilde{X}_F}^{nd}(x_-,y_-). 
$$
Here, recall the notation~$x_-:=\proj_+(x)$ and $y_-:=\proj_+(y).$
\end{claim}
\begin{proof}
[Proof of Claim~\ref{claim:lambdaXF}]
Write~$\lambda_{X_F}^{nd}$ for the $\Z[\Z_2]$-form induced by $\lambda_{X_F}$ on $H/\rad$.
Using the expression for $\lambda_{X_F}^{nd}$ derived in~\cite[Proposition 5.12]{ConwayOrsonPowell}, we obtain
\begin{equation}
\label{eq:StarterCalculation}
\lambda_{X_F}(x,y)
=\lambda_{X_F}^{nd}(\proj(x),\proj(y))
=(1-T)Q_{\widetilde{X}_F}^{nd}(\proj(x),\proj(y)).
\end{equation}
Combining the assertion with~\eqref{eq:StarterCalculation} implies that 
\begin{align*}
\lambda_{X_F}(x,y)
=(1-T)Q_{\widetilde{X}_F}^{nd}(\proj_{+}(x),\proj_{+}(y))
=(1-T)Q_{\widetilde{X}_F}^{nd}(x_-,y_-). 
\end{align*}
This concludes the proof of Claim~\ref{claim:lambdaXF}.
\end{proof}

We now verify that the proposed $+$ and $-$ forms satisfy~\eqref{eq:HR1} and~\eqref{eq:HR2}.
The result will then follow from Theorem~\ref{thm:HambletonRiehmTheorems36}.
For the first condition (rewritten as in~\eqref{eq:HR1Rewrite}), this follows because we get
\begin{align*}
\frac{(1-T)(\lambda_{X_F})_-(x_-,y_-)+(1+T)(\lambda_{X_F})_+(x_+,y_+)}{2} 
&=\frac{(1-T)2Q_{\widetilde{X}_F}^{nd}(x_-,y_-)+0}{2}\\
&=(1-T)Q_{\widetilde{X}_F}^{nd}(x_-,y_-).
\end{align*}
We now show~\eqref{eq:HR2} i.e.\ that $(H/\rad,2Q_{\widetilde{X}_F}^{nd})$ reduces to $(H/(H_+,H_-),0)$ modulo $2$.
This is clear because of the factor $2$ in the form.
The isomorphisms~$H/H_- \cong \Z_+^{h-1}$ and $H/(H_-,H_+) \cong \Z_2^{h-1}$ follow from the fact that $H \cong \Z_- \oplus \Z[\Z_2]^{h-1}$; see~\cite[Proposition 4.12]{ConwayOrsonPowell}.
\end{proof}

Even with the knowledge of $2Q_{\widetilde{X}_F}^{nd}$ from Example~\ref{ex:ExteriorForm}, Proposition~\ref{prop:PullbackSurfaceExterior} is challenging to use in practice unless one understands the maps in the pullback square, i.e.\ unless one is able to choose an explicit isomorphism~$H_2(\widetilde{X}_F) \cong \Z_- \oplus \Z[\Z_2]^{h-1}.$
Once one chooses such an isomorphism, the maps in this pulback square can be thought of as 
$$
\xymatrix@R0.4cm@C0.4cm{
(\Z_- \oplus \Z[\Z_2]^{h-1},\lambda)\ar[r]\ar[d]& (\Z_- \oplus \Z_-^{h-1},2Q_{\widetilde{X}_F}^{nd})\ar[d]^{\red_2 \circ \proj_2} \\
(\Z_+^{h-1},0)\ar[r]^{\red_2}& (\Z_2^{h-1},0).
}
$$
Here,  $\red_2$ denotes mod $2$ reduction and $\proj_2$ denotes projection onto the second factor.

The next proposition describes a type of isomorphism~$H_2(\widetilde{X}_F) \cong \Z_- \oplus \Z[\Z_2]^{h-1}$ for the unknot.
\begin{proposition}
\label{prop:IdentificationUnknot}
For $U:=U_h \subset S^4$ the unknotted surface with non-orientable genus~$h$ and (extremal) normal Euler number~$-2h$,  there exists an isomorphism
$$H_2(\widetilde{X}_U) \cong \Z_- \oplus \Z[\Z_2]^{h-1}$$
and a basis of the underlying free abelian group with respect to which $Q_{\widetilde{X}_U}$ is represented by
\begin{equation}
\label{eq:IntersectionFormHandle}
\begin{pmatrix} 4 & 2 & -2 & 2 & -2 & \dots & 2 & -2 \\ 2 & 2 & -2 & 1 & -1 & \dots & 1 & -1 \\ -2 & -2 & 2 & -1 & 1 & \dots & -1 & 1 \\ 2 & 1 & -1 & 2 & -2 & \dots & 1 & -1 \\ -2 & -1 & 1 & -2 & 2 & \dots & -1 & 1 \\ \vdots & \vdots & \vdots & \vdots & \vdots & \ddots & \vdots & \vdots \\ 2 & 1 & -1 & 1 & -1 & \dots & 2 & -2 \\ -2 & -1 & 1 & -1 & 1 & \dots & -2 & 2 \end{pmatrix}
\end{equation}
With respect to this basis, the induced form $Q_{\widetilde{X}_U}^{nd}$ on~$H_2(\widetilde{X}_U)/\rad$ is represented by the size $h$ matrix from~\eqref{eq:COPMatrix}.
\end{proposition}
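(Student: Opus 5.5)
Since $U=U_h$ is a boundary connected sum of $h$ copies of the unknotted projective plane $P$ with Euler number $-2$, I will build an explicit handle decomposition of $X_U$ and read off $H_2(\widetilde{X}_U)$ from the lifted handle structure on the double cover. The exterior of $P$ has a handle description with one $0$-handle, one $1$-handle and one $2$-handle. Its fundamental group is $\Z_2$, generated by the $1$-handle, and the $2$-handle attaches along a curve that runs twice over the $1$-handle. Boundary connected sum then gives $X_U$ as a single $0$-handle, $h$ one-handles and $h$ two-handles. The relations $x_i^2 = x_j^2$ and $x_i^2=1$ should reduce $\pi_1$ to $\Z_2$ after handle slides. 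Concretely, I would slide all $1$-handles over one another so that $h-1$ of them cancel against $2$-handles up to the relations $x_i x_1^{-1}$, leaving one $1$-handle, $h$ two-handles, and possibly some $3$-handles.

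Next I lift to the universal cover $\widetilde{X}_U$. Each $2$-handle lifts to two $2$-handles $\tilde h_i$ and $T\tilde h_i$. In the cellular chain complex $C_*(\widetilde{X}_U)$ over $\Lambda=\Z[\Z_2]$, I compute $\ker(\partial_2)$. One generator, the lift coming from the first $\mathbb{RP}^2$, will satisfy $T e_1=-e_1$ in homology: it is the $\Z_-$ summand, and geometrically it is the lifted core of the Möbius band. The remaining $h-1$ generators $e_2,\dots,e_h$, obtained from differences of pairs of $2$-handles whose boundaries cancel, are free. This yields $H_2(\widetilde{X}_U)\cong \Z_-\oplus\Lambda^{h-1}$, consistent with \cite[Proposition 4.12]{ConwayOrsonPowell}. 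As $\Z$-basis I take $e_1, e_2, Te_2,\dots,e_h,Te_h$.

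To compute $Q_{\widetilde{X}_U}$, I will not compute linking data in the cover directly. Instead I use the equivariant relation already established: by Claim~\ref{claim:lambdaXF}, $\lambda_{X_U}(x,y)=(1-T)Q^{nd}(x_-,y_-)$, and $Q_{\widetilde{X}_U}(x,y)$ is the coefficient of $1$ in $\lambda_{X_U}(x,y)$. Hence $Q_{\widetilde{X}_U}(x,y)=Q^{nd}(x_-,y_-)$ and $Q_{\widetilde{X}_U}(x,Ty)=-Q^{nd}(x_-,y_-)$. Since $\proj_+(Te_i)=-\proj_+(e_i)$, the full matrix in the basis above is obtained from the $h\times h$ matrix of $Q^{nd}$ by inserting each column/row $j\ge2$ together with its negative. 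That is exactly the pattern of signs in \eqref{eq:IntersectionFormHandle}.

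It therefore remains to show that the images $\proj_+(e_1),\dots,\proj_+(e_h)$ form a basis of $H/\rad\cong\Z_-^h$ in which $Q^{nd}$ is the matrix \eqref{eq:COPMatrix}. Here I will use Example~\ref{ex:ExteriorForm}: $H/\rad$ embeds in $H_2(\Sigma_2(U))=\Z^h$, with standard basis $f_1,\dots,f_h$ of square $1$ for the $h$ copies of $\C P^2$, as the even sublattice. I would identify $e_1\mapsto f_1+\dots+f_h+f_1$ (square $4$) and $e_j\mapsto f_1+f_j$ (square $2$, pairwise products $1$, product $2$ with $e_1$), which gives \eqref{eq:COPMatrix}. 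These vectors generate the even sublattice, because the determinant of that matrix is $4$, matching index $2$ in a unimodular lattice. The main obstacle is this identification: tracking that the specific handle-built classes map to these vectors under $\widetilde{X}_U\hookrightarrow\Sigma_2(U)$, with the correct signs. I would verify this by completing the handle diagram of $\Sigma_2(U)$ (adding the branch set back) and checking which lifted $2$-handles become which $\C P^2$ spheres. Failing that, I would choose any basis and change it by an explicit $\Lambda$-automorphism of $\Z_-\oplus\Lambda^{h-1}$, whose image in $\mathrm{GL}$ of $\Z_-^h$ is flexible enough to reach this basis.
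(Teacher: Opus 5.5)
Your use of Claim~\ref{claim:lambdaXF} is legitimate, since it is proved for every $\Z_2$-surface before this proposition. It gives a lighter route than the paper's linking-matrix computation: $Q_{\widetilde{X}_U}(x,y)$ is the coefficient of $1$ in $(1-T)Q^{nd}_{\widetilde{X}_U}(\proj_+(x),\proj_+(y))$ and $\proj_+(Te_j)=-\proj_+(e_j)$. So the whole matrix~\eqref{eq:IntersectionFormHandle}, signs included, is forced once the Gram matrix of $\proj_+(e_1),\dots,\proj_+(e_h)$ is~\eqref{eq:COPMatrix}.

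That last step, which you flag as the main obstacle, is the real content of the proposition, and it is not established. First, your explicit choice is wrong: $2f_1+f_2+\dots+f_h$ has square $h+3$, pairs to $3$ with each $f_1+f_j$, and for even $h$ is not even in the even sublattice. What works is $2f_1,f_1+f_2,\dots,f_1+f_h$.

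More seriously, the fallback fails as stated. Put $H:=H_2(\widetilde{X}_U)$ and $L:=H/\rad\subset H_2(\Sigma_2(U))$, and let $v_1\in L$ be the image of the $\Z_-$-generator. Every $\Lambda$-automorphism of $H$ preserves $H_-$, whose image in $L$ is $\Z v_1+2L$. So the class of $v_1$ modulo $2L$ is an invariant. Concretely, the matrices induced on $\Z_-^h$ by $\Lambda$-automorphisms are exactly those in $GL_h(\Z)$ whose first column is $\equiv(1,0,\dots,0)^T \pmod 2$, a subgroup of index $2^h-1$. Hence you can reach the basis $2f_1,f_1+f_2,\dots,f_1+f_h$ only if you know that $v_1\equiv 2f_1 \pmod{2L}$, i.e.\ that the $\Z_-$-generator maps into $2H_2(\Sigma_2(U))$. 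Neither Claim~\ref{claim:lambdaXF} nor Example~\ref{ex:ExteriorForm} says anything about this class.

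This is not a technicality. It is what the first row $4,2,-2,2,\dots$ of~\eqref{eq:IntersectionFormHandle} records, and it is used later: Convention~\ref{convention:Fix} and the proof of Proposition~\ref{prop:alpha-Fix} use that the $\Z_-$-generator has length $4$ and divisibility $2$ in $u^{nd}_{ext}$. The paper gets this from the explicit handle basis, where $e_1=c_1-a_1=(T-1)a_1$.

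You already suggest that the $\Z_-$-generator comes from the first $\R P^2$; making this precise would close the gap. The universal cover of the exterior of the first $\R P^2$ summand (with a ball pair removed) has $H_2\cong\Z_-$. By the case $h=1$ of Example~\ref{ex:ExteriorForm}, its generator maps to $\pm 2f_1\notin 2L$ in $H_2(\Sigma_2(U))$. Since $H_-$ has one-dimensional image in $L/2L$, this forces $v_1\equiv 2f_1\pmod{2L}$. The restricted automorphisms above lift to $\Lambda$-automorphisms, by the pullback description of $H$ and the surjectivity of $GL_{h-1}(\Z)\to GL_{h-1}(\Z_2)$. They then carry your basis to the required one.

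Separately, your handle setup is off. $X_U$ is not the boundary connected sum of $h$ copies of $X_P$: that manifold has fundamental group $\Z_2*\dots*\Z_2$ and Euler characteristic $1$, whereas $\chi(X_U)=h$. One needs $h-1$ further $2$-handles identifying the meridians, and only then does cancellation leave one $1$-handle and $h$ $2$-handles. This part is repairable, e.g.\ by quoting \cite[Proposition 4.12]{ConwayOrsonPowell} for the module structure.
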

\begin{proof}
We first obtain a handle diagram for $X_U$ depicted in the left hand side of Figure~\ref{fig:ExteriorAndCover} via the standard decomposition of $\R P^2$; see e.g.~\cite[Section 1]{KatanagaSaekiTeragaitoYamada} or~\cite[Figure 6.2]{GompfStipsicz} for a detailed discussion.
Perform isotopies to obtain the diagram on the right hand side of Figure~\ref{fig:ExteriorAndCover}, whence the diagram for the universal cover is readily obtained as shown in the bottom of Figure~\ref{fig:ExteriorAndCover}.

Label the cores of the 2-handles by $a_i$, $c_i$ for $1\leq i\leq h$ as in Figure~\ref{fig:ExteriorAndCover}.  
In this diagram the deck transformation is evident but, beyond the relations~$Ta_i=c_i$ and~$Tc_i=a_i$ for~$i=1,\ldots,h$, the module structure is not.

\begin{figure}[!htb]
\includegraphics[scale=0.5]{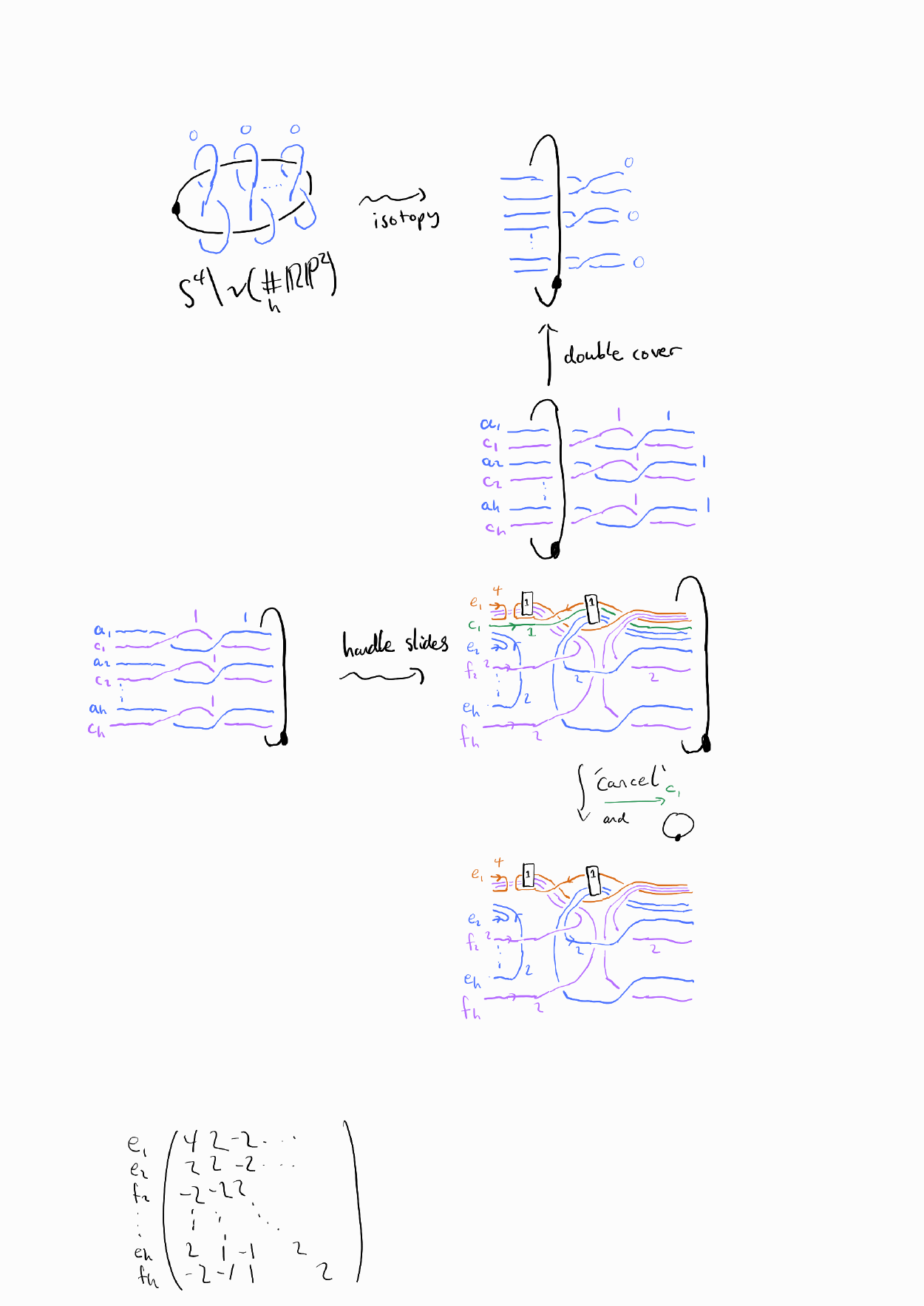}
\caption{
A handle diagram for the exterior $X_U$ as well as for its universal cover $\widetilde{X}_U$.
}
\label{fig:ExteriorAndCover}
\end{figure}

Form a basis for $H_2(\widetilde{X}_U)$ by setting~$e_1=c_1-a_1$ and~$e_i=a_i-c_1$ and~$f_i=c_i-a_1$ for~$i=2,\ldots,h$ (here when we take the difference of the cores we obtain spheres by adding the obvious bands between the attaching circles).
Observe that~$Te_1=-e_1$ and~$Te_i=f_i,Tf_i=e_i$ for~$i \geq 2$.
Perform handle sides to obtain the diagram on the right hand side of Figure~\ref{fig:FigureSlides}.
Removing the~$2$-handle labelled~$c_1$ and the~$1$-handle leads to a new~$4$-manifold but with the same intersection form, namely the one displayed in~\eqref{eq:IntersectionFormHandle}.
This is also illustrated in Figure~\ref{fig:FigureSlides}.
As explained above,  the~$e_i$ then give rise to the decomposition~$H_2(\widetilde{X}_U) \cong \Z_- \oplus \Z[\Z_2]^{h-1}$.

\begin{figure}[!htb]
\includegraphics[scale=0.7]{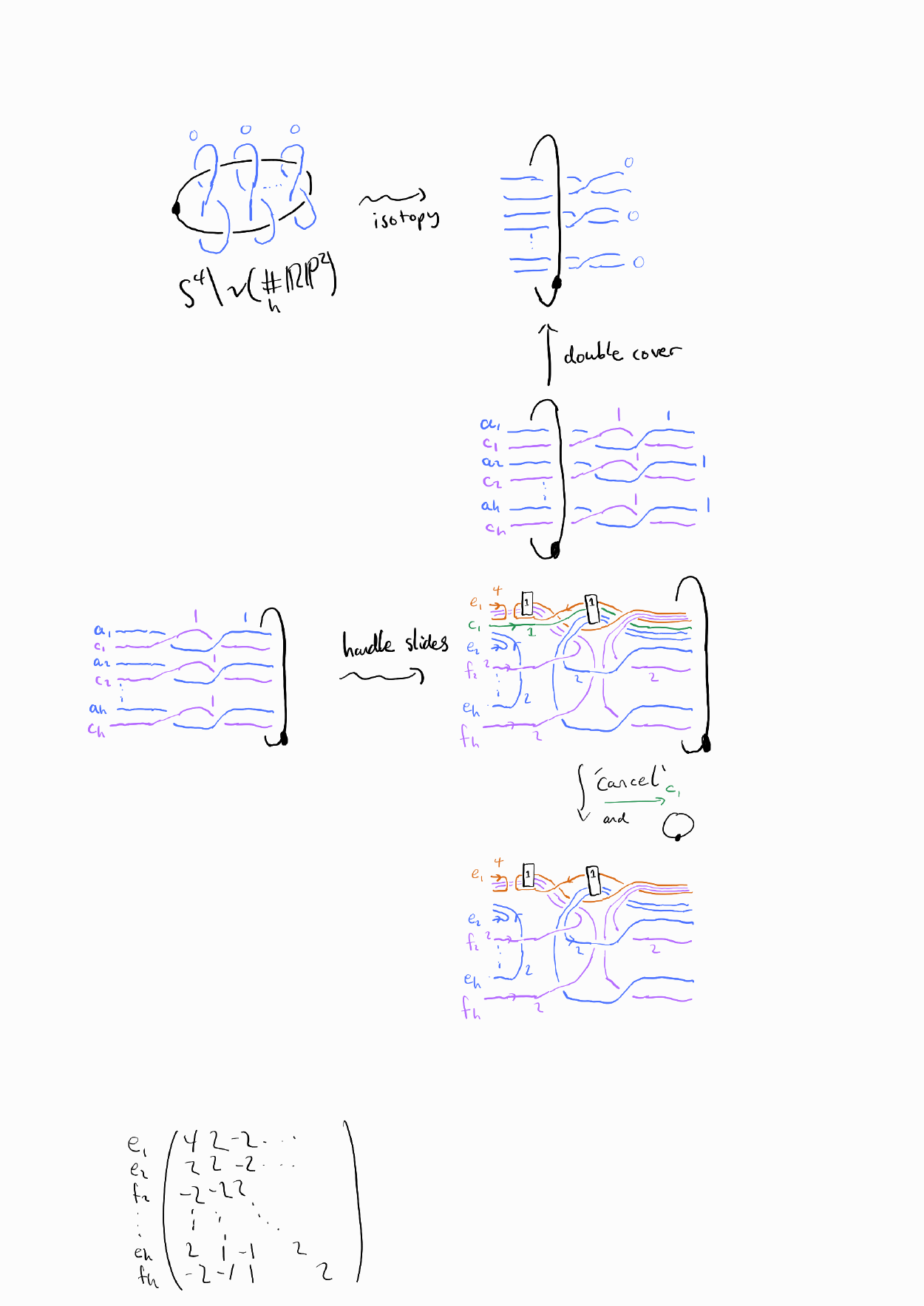}
\caption{
Left to right: Performing handle slides on the initial diagram for~$\widetilde{X}_U$.
Top to bottom: Forgetting the~$2$-handle~$c_1$ and the~$1$-handle leads allows for a prompt calculation of the intersection form of~$\widetilde{X}_U$ with respect to the basis~$e_1,e_2,f_2,\ldots,e_h,f_h$. 
The boxes denote a positive full twist.
}
\label{fig:FigureSlides}
\end{figure}

It only remains to calculate the nondegenerate form.
The radical of the matrix in~\eqref{eq:IntersectionFormHandle} is freely generated by~$e_i+f_i$ for~$i>2$, which agrees with the fact that the radical is endowed with the~$+$-structure.
In the quotient we obtain the relation~$[e_i]=-[f_i]$ for~$i=2,\ldots,h$.
In the basis~$[e_1],[e_2],[e_3],\ldots,[e_h]$,  we then obtain the required intersection form.
\end{proof}

\begin{convention}
\label{conv:Identification}
Given the unknotted~$\Z_2$-surface~$U\subset S^4$ of non-orientable genus $h$ and extremal Euler number $-2h$,  from now on, we fix a~$\Z[\Z_2]$-isomorphism~$H_2(\widetilde{X}_{U}) \cong \Z_- \oplus \Z[\Z_2]^{h-1}$ as in Proposition~\ref{prop:IdentificationUnknot} to identify the therein pullback square with the pullback square
$$
\xymatrix@R0.4cm@C1.4cm{
(\Z_- \oplus \Z[\Z_2]^{h-1},\lambda_{X_U})\ar[r]^{\id \oplus \red_{1+T}}\ar[d]^{\red_{1-T} \circ \proj_2}& (\Z_- \oplus \Z_-^{h-1},2Q_{\widetilde{X}_{U}}^{nd})\ar[d]^{\red_2 \circ \proj_2} \\
(\Z_+^{h-1},0)\ar[r]^{\red_2}& (\Z_2^{h-1},0).
}
$$
Here, we write $2Q_{\widetilde{X}_F}^{nd}$ as a shorthand for the matrix in~\eqref{eq:COPMatrix}.
We also recall that $\red_2$ denotes reduction mod $2$, that $\proj_2$ is the projection onto the second summand, and that $\red_{1 \pm T}$ denotes reduction modulo the ideal generated by~$1 \pm T$.
\end{convention}

\section{Constructing non-smoothable surfaces}
\label{sec:Proof}

This section is devoted to the proof of Theorem~\ref{thm:precise}.
We briefly outline the strategy of the proof as well as the organisation of the section.  
Start by using Examples~\ref{ex:Hyperbolic} and~\ref{ex:directsum} to consider the twice stabilised pullback square associated to the unknotted surface~$U_h \subset S^4$ with non-orientable genus~$h$ and (extremal) normal Euler number~$-2h$ (recall  Proposition~\ref{prop:PullbackSurfaceExterior}).
Setting $u:= Q_{\Sigma_2(U_h)}$ for the diagonal definite non-singular form of rank~$h$, this pullback takes the following form:
\begin{equation}
	\label{eq:Pullback1}
	\xymatrix@R0.4cm@C0.4cm{
{\overbrace{(H_2(\widetilde{X}_{U_h}),\lambda_{X_{U_h}})}^{:=(M_{U_h},\lambda_{U_h})}}\oplus H(\Z[\Z_2])^{\oplus 2} \ar[r]\ar[d]& 
(\Z_-^h,2u^{nd}_{ext})\oplus H(\Z_-)^{\oplus 2}\ar[d]^{\red_2 \circ \proj_2}. \\
		(\Z_+^{h-1},0)\oplus H(\Z_+)^{\oplus 2} \ar[r]^{{\red_2}}& (\Z_2^{h-1},0)\oplus H(\Z_2)^{\oplus 2}.
	}
\end{equation}
The first summands of the bottom horizontal map and the right vertical map are defined to be the ones from Convention~\ref{conv:Identification}.  On the hyperbolic summands, the horizontal (resp.\ vertical) maps are induced by quotienting by~$1+T$ (resp.\ ~$1-T$) as in Example~\ref{ex:Hyperbolic}.

Now fix an odd definite non-singular symmetric bilinear form~$b \colon \Z^h \times \Z^h \to \Z$.
This is the form that we wish to realise as the intersection form of the double branched cover of a $\Z_2$-surface.
Use Examples~\ref{ex:Hyperbolic} and~\ref{ex:directsum} to form the twice stabilised pullback square associated to~$b$ (which currently has no surface attached to~it):
\begin{equation}
\label{eq:Pullback2}
	\xymatrix@R0.4cm@C0.4cm{
		(M,\lambda_b)\oplus H(\Z[\Z_2])^{\oplus 2} \ar[r]\ar[d]& (\Z_-^h,2b^{nd}_{ext})\oplus H(\Z_-)^{\oplus 2} \ar[d]^{\red_2 \circ \proj_2} \\
		(\Z_+^{h-1},0)\oplus H(\Z_+)^{\oplus 2} \ar[r]^{\red_2}& (\Z_2^{h-1},0)\oplus H(\Z_2)^{\oplus 2}.
	}
\end{equation}
The bottom horizontal map and the right vertical map are defined to be the same as in~\eqref{eq:Pullback1}.
The main technical step in the proof of Theorem~\ref{thm:precise} consists of stably realising~$\lambda_b$.
This is achieved in Proposition~\ref{prop:alpha} but here we state the main steps.

In order to stably realise~$\lambda_b$,  Proposition~\ref{prop:alpha-} first builds an isometry 
$$\beta\colon (\Z_-^h,2u^{nd}_{ext})\oplus H(\Z_-)^{\oplus 2}\to(\Z_-^h,2b^{nd}_{ext})\oplus H(\Z_-)^{\oplus 2} .
$$
When $b \cong b' \oplus (1)$ with $b'$ an even definite non-singular symmetric bilinear form,  Proposition~\ref{prop:alpha-Fix} then shows that $\beta$ can be modified so as to yield a new isometry $\alpha_-$ that descends to an isometry 
$$
\alpha_2 \colon (\Z_2^{h-1},0)\oplus H(\Z_2)^{\oplus 2} \to (\Z_2^{h-1},0)\oplus H(\Z_2)^{\oplus 2}.
$$
Proposition~\ref{prop:alpha+} then lifts $\alpha_2$ to an isometry
$$
\alpha_+ \colon (\Z_+^{h-1},0)\oplus H(\Z_+)^{\oplus 2} \to (\Z_+^{h-1},0)\oplus H(\Z_+)^{\oplus 2}.
$$
The stable realisability of~$\lambda_b$ then follows from the work of Hambleton-Riehm (Proposition~\ref{prop:pullback_isometry}).
The construction of~$\alpha_-,\alpha_+$ and the proof of stable realisation are carried out in Sections~\ref{sub:alpha-}-\ref{sub:StablyRealising}.

Once~$\lambda_b$ is stably realised, the remaining steps are carried out in Section~\ref{sub:ProofEnd}.
In a nutshell,  Proposition~\ref{prop:StablyRealisable} ensures that $\lambda_b$ is in fact realised and it is then relatively routine to deduce that the form in fact arises as the equivariant intersection form of the exterior of a~$\Z_2$-surface $F \subset S^4$.
We then show that $Q_{\Sigma_2(F)} \cong b$.  This step is somewhat technical, since, due to the nature of our construction, we can only a priori deduce that $Q_{\Sigma_2(F)}$ and $b$ share an index two subform.

\subsection{Building the stable isometry $\alpha_-$ of the minus forms}
\label{sub:alpha-}

Recall that, as above,  we write~$U:=U_h \subset S^4$ for the unknotted surface of non-orientable genus $h$ with (extremal) normal Euler number~$-2h$ and set $u:=Q_{\Sigma_2(U_h)}$.

\begin{convention}
\label{convention:Fix}
The identification of Convention~\ref{conv:Identification} induces an isomorphism
$$M_U/(M_U)_+ \cong \Z_- \langle x \rangle \oplus \Z_-^{h-1}.$$
As a consequence, the map~$M_U/(M_U)_+ \to M_U/((M_U)_+,(M_U)_-)$ can be viewed as 
\begin{equation}
\label{eq:Proj-to2}
 \Z_- \oplus \Z_-^{h-1} \xrightarrow{\proj_2} \Z_-^{h-1} \xrightarrow{\red_2} \Z_2^{h-1}.
\end{equation}
With respect to the resulting basis, the first basis vector of $(\Z^h,u^{nd}_{ext})$ is $e_1:=2x$.
Note that $e_1$ has length $4$ and divisibility~$2$ in~$u^{nd}_{ext}$;  here the \emph{divisibility} $\operatorname{div}(y)$ of an element $y$  with respect to a symmetric bilinear form $(V,b)$ is the generator of the ideal $\langle b(y,z)\in \Z \mid z \in V \rangle$.
\end{convention}

\begin{lemma}\label{lem:induce}
Given an odd definite non-singular symmetric bilinear form~$b\colon \Z^h\times \Z^h\to \Z$ and an isometry 
	\[
	\alpha_-\colon (\Z_-^h,2u^{nd}_{ext})\oplus H(\Z_-)^{\oplus 2}\to(\Z_-^h,2b^{nd}_{ext})\oplus H(\Z_-)^{\oplus 2},
	\] 
the following assertions are equivalent:
\begin{itemize}
\item The isometry $\alpha_-$ 	induces an isometry 
	\[
	\alpha_2 \colon (\Z_2^{h-1},0) \oplus H(\Z_2)^{\oplus 2} 
	\to (\Z_2^{h-1},0) \oplus H(\Z_2)^{\oplus 2}.
	\]
\item The isometry $\alpha_-$ satisfies $\alpha_-(e_1)=ne_1+2\varepsilon$, where $n \in \Z$ is an integer, $e_1$ is the generator of the first $\Z_-$ summand,  and $\varepsilon\in \Z^{h+4}$.
\end{itemize}
\end{lemma}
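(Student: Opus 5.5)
The plan is to make explicit what "induces a map on the mod~$2$ quotient" means. By Convention~\ref{convention:Fix} and~\eqref{eq:Pullback1}, \eqref{eq:Pullback2}, the map from the minus form to the $2$-form is the same on both sides:
\[
\pi\colon \Z_-\langle e_1\rangle\oplus \Z_-^{h-1}\oplus H(\Z_-)^{\oplus 2}\longrightarrow \Z_2^{h-1}\oplus H(\Z_2)^{\oplus 2},
\]
namely forget the first coordinate and reduce the remaining $h+3$ coordinates mod~$2$. Here I regard $e_1$ as the generator of the first $\Z_-$ summand, and I am glossing over the $x$ versus $2x$ normalisation. Hence $\ker\pi=\Z e_1+2(\Z^{h-1}\oplus\Z^4)=\Z e_1+2\Z^{h+4}$. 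A homomorphism $\alpha_-$ descends to some $\alpha_2$ with $\pi\circ\alpha_-=\alpha_2\circ\pi$ if and only if $\alpha_-(\ker\pi)\subseteq\ker\pi$.

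First I will show that $\alpha_-(2\Z^{h+4})\subseteq 2\Z^{h+4}\subseteq\ker\pi$ always holds, by linearity. So the descent condition reduces to $\alpha_-(e_1)\in\ker\pi$, i.e.\ $\alpha_-(e_1)=ne_1+2\varepsilon$ with $n\in\Z$ and $\varepsilon\in\Z^{h+4}$. This gives both directions of the equivalence at the level of a well-defined induced homomorphism $\alpha_2$.

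Next I will check that such an $\alpha_2$ is automatically an isometry, and in fact an isomorphism. The isomorphism part follows by applying the same argument to $\alpha_-^{-1}$. Here I need $\alpha_-^{-1}(e_1)\in\ker\pi$, which I get from $\alpha_-(\ker\pi)\subseteq\ker\pi$: both are index-$2^{h+3}$ sublattices, so the inclusion is an equality, and $\alpha_-^{-1}$ also preserves $\ker\pi$. Form-preservation follows because the $2$-form is the mod~$2$ reduction of the minus form, by~\eqref{eq:HR2}. On the $\Z_2^{h-1}$ part both the reduced form and the target are zero, since $2b^{nd}_{ext}$ is even. On the hyperbolic part the reduction is exactly $H(\Z_2)$. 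Therefore $\lambda_2(\alpha_2\pi x,\alpha_2\pi y)=[\lambda_-(\alpha_- x,\alpha_- y)]=[\lambda_-(x,y)]=\lambda_2(\pi x,\pi y)$.

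The main obstacle is bookkeeping rather than a conceptual difficulty. I must make sure the right vertical maps in~\eqref{eq:Pullback1} and~\eqref{eq:Pullback2} are literally the same map $\pi$ in the chosen coordinates. I also must confirm that the kernel contains the whole first summand, so that the condition singles out $e_1$ up to the normalisation $e_1=2x$.
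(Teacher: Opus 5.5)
Your proposal is correct and follows essentially the same route as the paper. Both identify $\ker(\red_2\circ\proj_2)=\Z e_1+2\Z^{h+4}$, note that $\alpha_-$ automatically preserves $2\Z^{h+4}$ (so descent is equivalent to $\alpha_-(e_1)$ lying in this kernel), and obtain the isometry property from the fact that the $2$-form is the mod~$2$ reduction of the minus form. Your index count showing $\alpha_-(\ker)=\ker$, and hence that $\alpha_2$ is bijective, makes explicit a point the paper's proof passes over quickly.
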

\begin{proof}
Recalling the definition of the projection map from~\eqref{eq:Proj-to2},
a short diagram chase shows that~$\alpha_-$ induces an isomorphism $\alpha_2$ as in the statement if and only if~$\alpha_-$ preserves the kernel of the map~$\red_2\circ\proj_2\colon \Z_-\oplus \Z_-^{h+3}\to \Z_2^{h+3}$.  
	This kernel is generated by~$e_1$ along with all multiples of two.  
	The fact that~$\alpha_-$ preserves multiples of two follows from it being a~$\Z_-$-module isomorphism.  Hence the only condition is that~$\alpha_-(e_1)$ must lie in the kernel, i.e.~$\alpha_-(e_1)=n e_1+2\varepsilon$ for~$n$ some integer and~$\varepsilon\in \Z^{h+4}$.  All that remains to check is that this~$\alpha_2$ is an isometry, but this follows since the forms are zero on the first~$h-1$ summands and since~$\alpha_-$ was an isometry with respect to the hyperbolic form~$H(\Z_-)^{\oplus 2}$ on the last~$4$ summands.
\end{proof}

The following result is the first step in the construction of $\alpha_-$.

\begin{proposition}
\label{prop:alpha-}
If $b \colon \Z^h \times \Z^h \to \Z$ is an odd definite non-singular symmetric bilinear form,  then there exists a $\Z[\Z_2]$-isometry
	\[
\beta\colon (\Z_-^h,2u^{nd}_{ext})\oplus H(\Z_-)^{\oplus 2}\to(\Z_-^h,2b^{nd}_{ext})\oplus H(\Z_-)^{\oplus 2}. 
	\] 
\end{proposition}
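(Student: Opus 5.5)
Since $b$ and $u=(1)^{\oplus h}$ are both odd, definite, non-singular of rank $h$, they have the same rank, signature and parity. Hence $b\oplus H(\Z)$ and $u\oplus H(\Z)$ are odd indefinite unimodular forms with identical invariants. By the classification of indefinite unimodular forms (Serre/Hasse--Minkowski), they are isometric. In fact $b\oplus H(\Z)\cong u\oplus H(\Z)$, since both are isometric to $(1)^{\oplus h+1}\oplus(-1)$ and $H(\Z)\oplus(1)\cong (1)^{\oplus 2}\oplus(-1)$ absorbs appropriately. Applying this twice gives $b\oplus H(\Z)^{\oplus 2}\cong u\oplus H(\Z)^{\oplus 2}$.

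Next I pass to exterior forms. Proposition~\ref{prop:NonDegUnderStable}, or more precisely its proof via the second item of Example~\ref{ex:ExteriorForm}, gives a $\Z$-isometry
\[
u^{nd}_{ext}\oplus H(\Z)^{\oplus 2}\cong (u\oplus H(\Z)^{\oplus 2})^{nd}_{ext}\cong (b\oplus H(\Z)^{\oplus 2})^{nd}_{ext}\cong b^{nd}_{ext}\oplus H(\Z)^{\oplus 2}.
\]
The middle isomorphism holds because any isometry $\phi$ of the odd forms preserves the sublattice $\{x : b(x,x)\equiv 0 \bmod 2\}$. Indeed, $\phi$ preserves squares, so it restricts to an isometry of these index-two sublattices.

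I then double the forms and put the $\Z[\Z_2]$-structure $Tx=-x$ on everything. Since $T$ acts by $-1$ on every module involved, every $\Z$-linear map is automatically $\Z[\Z_2]$-linear. Multiplying the form by $2$, the isometry above becomes an isometry $(\Z^h_-,2u^{nd}_{ext})\oplus (\Z_-^4,2H(\Z))\to(\Z^h_-,2b^{nd}_{ext})\oplus(\Z_-^4,2H(\Z))$.

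The main point needing care is the conventions on the hyperbolic summands. One must check that the minus-form of $H(\Lambda)$ from Example~\ref{ex:Hyperbolic} agrees with what appears in the pullback squares~\eqref{eq:Pullback1} and~\eqref{eq:Pullback2}. Example~\ref{ex:Hyperbolic} says the minus part is $H(\Z_-)$, not $2H(\Z_-)$, whereas the nondegenerate parts carry a factor of $2$. To handle this, I use the fact that $(b\oplus H)^{nd}_{ext}\cong b^{nd}_{ext}\oplus H$ holds with $H$ carrying no factor. I would therefore run the argument with $2b^{nd}_{ext}$ and the undoubled hyperbolic summand viewed as $\Z$-forms. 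The cleanest route is to apply the indefinite classification directly to the lattices $2u^{nd}_{ext}\oplus H(\Z)^{\oplus 2}$ and $2b^{nd}_{ext}\oplus H(\Z)^{\oplus 2}$. These are indefinite, even, of the same rank and signature, and their discriminant forms agree: the discriminant form of $2u^{nd}_{ext}$ is determined by that of $u^{nd}_{ext}$ rescaled, and the two rescaled discriminant forms match via the isometry $u^{nd}_{ext}\oplus H\cong b^{nd}_{ext}\oplus H$. Nikulin's uniqueness theorem then applies because the rank exceeds the discriminant length plus two, thanks to the two hyperbolic summands. I expect verifying this uniqueness criterion to be the main obstacle.

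Finally, any $\Z$-isometry between these forms, with $T=-1$ on all summands, is a $\Z[\Z_2]$-isometry $\beta$ as claimed.
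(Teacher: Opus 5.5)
Your outline is the paper's. You stabilise $b$ and $u$ to isometric indefinite forms and pass to exterior forms via Proposition~\ref{prop:NonDegUnderStable}. You then correctly notice that doubling that isometry puts $2H(\Z)$ rather than $H(\Z)$ on the hyperbolic summands, so you apply Nikulin's uniqueness theorem directly to $2u^{nd}_{ext}\oplus H(\Z)^{\oplus 2}$ and $2b^{nd}_{ext}\oplus H(\Z)^{\oplus 2}$.

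The rank condition you flag as the main obstacle is immediate. The discriminant group of a rank-$h$ lattice needs at most $h$ generators, and the rank here is $h+4$. The step that carries the real content is the one you dispose of in a sentence: that $2u^{nd}_{ext}$ and $2b^{nd}_{ext}$ have isometric discriminant (quadratic linking) forms. This is exactly what the paper proves in Claim~\ref{claim:BoundaryLinking}, and your justification does not work as stated.

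For an even lattice $(L,c)$, the discriminant group of $(L,2c)$ is $\tfrac12 L^*/L\cong L^*/2L$, of order $2^{\operatorname{rk} L}\,|L^*/L|$. It contains $\tfrac12 L/L\cong L/2L$, on which the discriminant form of $(L,2c)$ records $c(x,x)\bmod 4$. So $q_{2u^{nd}_{ext}}$ is not a rescaling of $q_{u^{nd}_{ext}}$, and an isometry $q_{u^{nd}_{ext}}\cong q_{b^{nd}_{ext}}$ does not transfer by itself.

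Nor does the isometry $u^{nd}_{ext}\oplus H\cong b^{nd}_{ext}\oplus H$ give it directly. Doubling only yields $q_{2u^{nd}_{ext}}\oplus q_{2H(\Z)}\cong q_{2b^{nd}_{ext}}\oplus q_{2H(\Z)}$, where $q_{2H(\Z)}$ is a nontrivial form on $\Z_2^2$. Cancellation of quadratic forms on finite $2$-groups is not automatic. The paper handles this by lifting an isometry of the linking forms of $u^{nd}_{ext},b^{nd}_{ext}$ through Smith normal forms, using surjectivity of $\Aut(G)\to\Aut(2G)$.

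A shorter repair of your argument uses genus theory.
\begin{itemize}
\item Because $H(\Z)$ is unimodular, $u^{nd}_{ext}$ and $b^{nd}_{ext}$ are even lattices with the same rank, signature and discriminant form.
\item By Nikulin's theorem that these invariants determine the genus of an even lattice, $u^{nd}_{ext}$ and $b^{nd}_{ext}$ lie in the same genus.
\item Isometries over each $\Z_p$ and over $\R$ remain isometries after multiplying both forms by $2$. Hence $2u^{nd}_{ext}\oplus H(\Z)^{\oplus 2}$ and $2b^{nd}_{ext}\oplus H(\Z)^{\oplus 2}$ lie in the same genus, and in particular $q_{2u^{nd}_{ext}}\cong q_{2b^{nd}_{ext}}$.
\item These two lattices are even, indefinite, and of rank at least $2$ plus the length of their discriminant group. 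So the uniqueness theorem you invoke makes them isometric.
\end{itemize}
Your final remark that $T=-1$ makes any such $\Z$-isometry a $\Z[\Z_2]$-isometry is fine.
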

\begin{proof}
	By the classification of non-singular indefinite symmetric bilinear forms (see e.g.~\cite{Serre}),  the~$\Z$-forms~$b$ and~$u=(1)^{\oplus h}$ become isometric after a single stabilisation (these stabilisations are indefinite, have the same rank, parity, and signature).
	  
	By Proposition~\ref{prop:NonDegUnderStable}, the forms~$b^{nd}_{ext}$ and~$u^{nd}_{ext}$ are stably isometric.  However, we need a stable isometry of the forms~$2b^{nd}_{ext}$ and~$2u^{nd}_{ext}$.  
	By the classification of non-degenerate forms due to Nikulin~\cite[Corollary~1.13.4]{Nikulin},~$2b^{nd}_{ext}$ and~$2u^{nd}_{ext}$ are stably isometric (after one stabilisation) if and only if they have the same rank, signature and quadratic boundary linking form (for a definition see e.g.~\cite[Section 7.4]{ConwayOrsonPowell}).  
	The first two follow readily from the fact that~$b^{nd}_{ext}$ and~$u^{nd}_{ext}$ are stably isometric.  The third also follows from this,  but the argument is  more involved.
	\begin{claim}
	\label{claim:BoundaryLinking}
		There exists an isometry of the quadratic linking forms of~$b^{nd}_{ext}$ and~$u^{nd}_{ext}$.  Furthermore, any such isometry~$\psi$ lifts to an isometry~$\widetilde{\psi}$ of the quadratic linking forms of $2b^{nd}_{ext}$ and $2u^{nd}_{ext}$.
	\end{claim}
	\begin{proof}[Proof of Claim~\ref{claim:BoundaryLinking}]
This claim can be proved using the definition of the quadratic boundary linking form mentioned e.g. in~\cite[Definition 7.26]{ConwayOrsonPowell} but we choose to work with matrices in order to avoid recalling background notions related to quadratic forms.

 Choose matrices~$A_b,A_u$ to represent the symmetric forms~$b^{nd}_{ext},u^{nd}_{ext}$ and~$Q_b,Q_u$ to represent the underlying quadratic forms, respectively, so that $A_u=Q_u+Q_u^T$ and $A_b=Q_b+Q_b^T$.
Use~$q_u \colon \Z^h/A_u\to \Q/\Z$ and~$q_b \colon \Z^h/A_b\to \Q/\Z$ to denote the quadratic boundary linking forms for~$b^{nd}_{ext}$ and~$u^{nd}_{ext}$, respectively.  
Taking inverses in~$\Q$,  as noted in~\cite[Remark 7.27]{ConwayOrsonPowell}, the  quadratic linking forms may be computed as 
\[
q_b(\pi(z))= z^T (A_b^{-1})^T Q_u A_b^{-1} z  \quad \text{ and } \quad q_u(\pi(z))= z^T (A_u^{-1})^T Q_u A_u^{-1} z.
\]
Here $\pi \colon \Z^h \to \Z^h/A_b$ denotes the canonical projection and similarly for $A_u$.

Similarly, the symmetric
forms~$2b^{nd}_{ext}$,~$2u^{nd}_{ext}$ and their underlying quadratic forms may then be represented by the matrices~$2A_b,2A_u,2Q_b,2Q_u$, respectively. 
This way, the corresponding quadratic linking forms~$q_{2u} \colon \Z^h/2A_u\to \Q/\Z$  and~$q_{2b} \colon \Z^h/2A_b\to \Q/\Z$ for~$2b^{nd}_{ext}$ and~$2u^{nd}_{ext}$ are given by 
\[
q_{2b}(\pi_2(z))= z^T ((2A_b)^{-1})^T 2Q_u (2A_b)^{-1} z  \quad \text{ and } \quad q_{2u}(\pi_2(z))= z^T ((2A_u)^{-1})^T 2Q_u (2A_u^{-1}) z.
\]
Here $\pi_2 \colon \Z^h \to \Z^h/2A_b$ denotes the canonical projection and similarly for $2A_u$.

Since $b^{nd}_{ext} \oplus H$ and~$u^{nd}_{ext}\oplus H$ are even and stably isometric,  so are their underlying quadratic forms.
This implies that their boundary quadratic linking forms are isometric.
Stabilising by a hyperbolic does not affect these linking forms,
so there exists a quadratic linking form isometry~$\psi\colon (\Z^h/A_u,q_u) \to (\Z^h/A_b,q_b)$. 
This establishes the first statement of the claim.  Now let $\psi$ be any such isometry.

We assert that the isomorphism $\psi \colon \Z^h/A_u \to \Z^h/A_b$ underlying this isometry lifts to an isomorphism~$\widetilde{\psi} \colon \Z^h/2A_u \to \Z^h/2A_u$ that makes the following diagram commute:
$$
\xymatrix{
\Z^h/2A_u \ar[d]_{\widetilde{\pi}}\ar@{-->}[r]^{\widetilde{\psi},\cong}& \Z^h/2A_b \ar[d]^{\widetilde{\pi}} \\
\Z^h/A_u \ar[r]^{\psi,\cong}& \Z^h/A_b.
}
$$
Here $\widetilde{\pi}$ denotes the relevant projection maps. 
 Recall that $A_u$ is congruent to the matrix displayed in~\eqref{eq:COPMatrix}.
Using row and column operations, one can show that this latter matrix (and therefore~$A_u$) has Smith normal form~$D=\operatorname{diag}(1,\ldots,1,4)$ if~$h$ is odd and to~$D=\operatorname{diag}(1,\ldots,1,2,2)$ if~$h$ is even.
Choose Smith normal forms
$$P_uA_uQ_u=D \quad \text{ and } \quad P_bA_bQ_b=D.$$
Set $G:=\coker(2D)$ for brevity from which it follows that~$2G \cong \coker(D)\in \{\Z_4,\Z_2 \oplus \Z_2\}.$
Putting~$A_b$ into Smith normal form and then performing the same row and column operations on~$2A_u$ and~$2A_b$ respectively leads to the following commutative diagram (in which the diagonal arrows arise from said operations,  the maps~$G \to 2G$ are the canonical ones, and the bottom horizontal map is the unique isomorphism that makes the diagram commute):
\begin{equation}
\label{eq:KeyDiagram}
\xymatrix{
&\Z^h/2A_u \ar[dd]_{\widetilde{\pi}} \ar[ld]_{P_u,\cong} & \Z^h/2A_b \ar[dd]^{\widetilde{\pi}} \ar[rd]^{P_b,\cong} \\
G\ar[dd]
\ar@{-->}@/^1pc/[rrr]^-{\cong}
&&&G\ar[dd] \\
&\Z^h/A_u \ar[r]^{\psi,\cong}\ar[ld]_{P_u,\cong}& \Z^h/A_b\ar[rd]^{P_b,\cong} \\
2G\ar@{-->}@/^1pc/[rrr]^{\cong}&&&2G. \\
}
\end{equation}
The assertion therefore reduces to proving that~$\Aut(G) \to \Aut(2G)$ is surjective.
Since~$2G$ is either~$\Z_4$ or~$\Z_2 \oplus \Z_2$, this is clear: in the first case because~$\Aut(\Z_4)=\{\pm 1\}$ and in the second because~$GL_2(\Z_4) \to GL_2(\Z_2)$ is surjective.
This concludes the proof of the assertion.

It remains to prove that the isomorphism~$\widetilde{\psi}$ from the assertion is an isometry of quadratic linking forms.
Consider the following commutative diagram:
$$
\xymatrix{
\Z^{h} \ar@{->>}[d]_{\pi_2}\ar@/_2pc/[dd]_{\pi}& \Z^{h} \ar@{->>}[d]^{\pi_2} \ar@/^2pc/[dd]^{\pi} \\
\Z^h/2A_u \ar[d]_{\widetilde{\pi}}\ar[r]^{\widetilde{\psi},\cong}& \Z^h/2A_b \ar[d]^{\widetilde{\pi}} \\
\Z^h/A_u \ar[r]^{\psi,\cong}& \Z^h/A_b.
}
$$
Choose $y \in \Z^h$ so that~$\pi_2(y)=\widetilde{\psi}(\pi_2(z))$.
It follows that~$\pi(y)=\psi(\pi(z))$ so that~$y$ can be used to calculate both $q_{2b}(\widetilde{\psi}(\pi_2(z)))$ and $q_b(\psi(\pi(z)))$.
We deduce that 
\begin{align*}
q_{2u}(\pi_2(z))
=& \frac{1}{2} z^T (A_u^{-1})^T Q_u A_u^{-1} z 
= \frac{1}{2} q_u(\pi(z))
=\frac{1}{2}q_b(\psi(\pi(z))) \\
&= \frac{1}{2} y^T (A_b^{-1})^T Q_b A_b^{-1} y
=q_{2b}(\widetilde{\psi}(\pi_2(z))).
\end{align*}
This concludes the proof of Claim~\ref{claim:BoundaryLinking}.
	\end{proof}
	
	We now appeal to the aforementioned classification of even non-degenerate symmetric bilinear forms due to Nikulin \cite[Corollary 1.13.4]{Nikulin} which says that $2b^{nd}_{ext}$ and $2u^{nd}_{ext}$ are stably isometric (after one stabilisation) since they have the same rank, signature, and quadratic boundary linking form. 
	The output is an isometry 
	\[
\beta \colon (\Z_-^h,2u^{nd}_{ext})\oplus H(\Z_-)^{\oplus 2}\xrightarrow{\cong}(\Z_-^h,2b^{nd}_{ext})\oplus H(\Z_-)^{\oplus 2}.
	\]
Here recall that $\beta$ is automatically a $\Z[\Z_2]$-isometry because the modules are endowed with the~$\Z_2$-action induced by $Tx=-x$. 
\end{proof}

The following result constructs the required isometry $\alpha_-$.

	\begin{proposition}
	\label{prop:alpha-Fix}
	If~$b=(1)\oplus b'\colon \Z^h\times \Z^h\to \Z$ for $b'$ an even definite non-singular symmetric bilinear form and there exists a~$\Z[\Z_2]$-isometry
	\[
	\beta \colon (\Z_-^h,2u^{nd}_{ext})\oplus H(\Z_-)^{\oplus 2}\xrightarrow{\cong}(\Z_-^h,2b^{nd}_{ext})\oplus H(\Z_-)^{\oplus 2},
	\]
	then there exists a~$\Z[\Z_2]$-isometry 
	\[
	\alpha_- \colon (\Z_-^h,2u^{nd}_{ext})\oplus H(\Z_-)^{\oplus 2}\xrightarrow{\cong}(\Z_-^h,2b^{nd}_{ext})\oplus H(\Z_-)^{\oplus 2}
	\]
	that descends to an isometry
	\[
	\alpha_2\colon (\Z_2^{h-1},0)\oplus H(\Z_2)^{\oplus 2}\xrightarrow{\cong}(\Z_2^{h-1},0)\oplus H(\Z_2)^{\oplus 2}.
	\]
\end{proposition}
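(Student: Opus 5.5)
By Lemma~\ref{lem:induce}, it suffices to produce an isometry $\alpha_-$ with $\alpha_-(e_1)=nf+2\varepsilon$, where $f$ denotes the generator of the first $\Z_-$ summand on the target side. I will take $\alpha_-:=\gamma\circ\beta$ for a suitable isometry $\gamma$ of the target form $L:=(\Z_-^h,2b^{nd}_{ext})\oplus H(\Z_-)^{\oplus 2}$. Then $\alpha_-$ is automatically a $\Z[\Z_2]$-isometry, since every module carries the action $Tx=-x$. The goal is therefore to choose $\gamma$ so that $\gamma(\beta(e_1))$ lies in $nf+2L$.

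First I make the target explicit. Since $b'$ is even, $b(x,x)\equiv x_1^2 \pmod 2$, so $H^{nd}_{ext}=2\Z\oplus\Z^{h-1}$ and $b^{nd}_{ext}\cong(4)\oplus b'$ with $f=2x_1$. Hence $L\cong(8)\oplus 2b'\oplus H(\Z)^{\oplus 2}$. The invariants of $v:=\beta(e_1)$ are read off from Convention~\ref{convention:Fix}: $e_1$ has length $4$ and divisibility $2$ in $u^{nd}_{ext}$, so in $L$ the vector $v$ is primitive, has length $8$ and divisibility $4$. In particular $v$ cannot be sent to $f$ itself, because $f$ has divisibility $8$. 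I will instead find a target vector $w=f+2\varepsilon$ with the same invariants. Pick a hyperbolic pair $e,g$ in one copy of $H(\Z)$ and a primitive $\varepsilon'\in\Z^{h-1}$ with $b'(\varepsilon',\varepsilon')=4m$, $m\ge 1$ (for $b'=E_8$ one may take a norm-$4$ vector, so $m=1$). Set
\[
w:=f+2\varepsilon'+4(e-mg).
\]
Then $w^2=8+8b'(\varepsilon',\varepsilon')-32m=8$. Its pairings are $8$ with $f$, $4b'(\varepsilon',\cdot)$ on the $2b'$ part (which has gcd $4$ because $b'$ is unimodular and $\varepsilon'$ is primitive), and $\pm4$ or multiples of $4$ on the hyperbolic parts. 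So $w$ is primitive, $w^2=8$, $\operatorname{div}(w)=4$, and $w\in f+2L$.

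Next I apply Eichler's criterion (transitivity of the orthogonal group of a lattice containing two hyperbolic planes on primitive vectors). In such a lattice, two primitive vectors lie in the same orbit if and only if they have the same length, the same divisibility $d$, and the same class of $v/d$ in the discriminant group $L^*/L$. Length and divisibility agree, so the remaining issue is the discriminant class $[v/4]$ versus $[w/4]$. This is the main obstacle. Both classes have the same quadratic value $8/16=1/2 \pmod{2\Z}$ and order $4$ in $L^*/L\cong\Z^h/2A_b$, but equality has to be arranged. My first attempt is to vary the parameters: replace $\varepsilon'$ by other norm-$4m$ vectors, shift $w$ by further multiples of hyperbolic vectors, or use $nf+2\varepsilon$ with odd $n$ such as $n=-1$. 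This sweeps out the admissible classes, and I will check that one matches $[v/4]$; concretely this is a computation in $\Z^h/2A_b$, which is small since $2G$ is $\Z_4$ or $\Z_2\oplus\Z_2$. As a fallback, I will precompose $\beta$ with an isometry of the source lattice, or change $\beta$ by an automorphism of the discriminant form. Every isometry of the discriminant form of $2b^{nd}_{ext}$ lifts to $\mathrm{O}(L)$ in the stable range; this is the argument of Claim~\ref{claim:BoundaryLinking} combined with Nikulin's results. Adjusting in this way moves $[v/4]$ to $[w/4]$.

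Once the classes agree, Eichler's criterion gives $\gamma\in\mathrm{O}(L)$ with $\gamma(v)=w$. Then $\alpha_-:=\gamma\circ\beta$ satisfies $\alpha_-(e_1)=f+2\varepsilon$. Lemma~\ref{lem:induce} then yields the induced isometry $\alpha_2$.
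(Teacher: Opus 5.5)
\textbf{Verdict: genuine gap.} Your route differs from the paper's and is viable, but the step that carries the whole proposition is left unproved.

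The paper precomposes $\beta$ with an isometry $\varphi$ of the source, found by Eichler's criterion. Before applying it, the paper corrects $e_1$ by the even vectors $v_i$ (to fix the discriminant class) and by $4$ times hyperbolic vectors (to fix the length, via a mod-$32$ count). You instead postcompose with an isometry $\gamma$ of the target and build the even correction $2\varepsilon'$ into $w$.

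The unproved step is the one you flag yourself: the equality $[v/4]=[w/4]$ in $L^*/L$. The justification you sketch does not close it.

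\begin{enumerate}
\item Write $L^*/L\cong\Z_8\oplus\Z_2^{h-1}$, with $\Z_8$ generated by $f/8$ and $\Z_2^{h-1}=\tfrac12\Z^{h-1}/\Z^{h-1}$ coming from $2b'$. This group has order $2^{h+2}$: it is $G$, not the small group $2G$.
\item Order together with the value of $q$ does not determine the $\mathrm{O}(q_L)$-orbit. The class $[f/4]=(2,0)$ is the class of the paper's first target vector ($f$ plus four times a hyperbolic generator). It has order $4$ and $q=1/2$, yet it lies in $2(L^*/L)$.
\item Your $[w/4]=(2,\bar\varepsilon')$ does not lie in $2(L^*/L)$. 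No automorphism of $L^*/L$, and in particular none induced by an isometry of $L$, can exchange such classes.
\end{enumerate}

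So neither sweeping parameters nor lifting discriminant isometries via Nikulin helps until you know which orbit $[v/4]$ lies in. This is exactly what forces the paper to add the $v_i$.

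\textbf{How to fill the gap.} Compute the orbit of $[v/4]$ for $v=\beta(e_1)$.

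\begin{enumerate}
\item Write $L_u$ for the source. Since $\beta$ induces a group isomorphism $L_u^*/L_u\to L^*/L$, it suffices to show $[e_1/4]\notin 2(L_u^*/L_u)$.
\item Model $u^{nd}_{ext}$ as $D_h=\{x\in\Z^h : \sum_i x_i\equiv 0 \pmod 2\}$ with $e_1=2\epsilon_1$, where $\epsilon_i$ is the standard basis of $\Z^h$. The Gram matrix of $2\epsilon_1,\epsilon_1+\epsilon_2,\ldots,\epsilon_1+\epsilon_h$ is~\eqref{eq:COPMatrix}.
\item Then $L_u^*=\tfrac12 D_h^*\oplus H(\Z)^{\oplus 2}$, so $2L_u^*+L_u=D_h^*\oplus H(\Z)^{\oplus 2}$. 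But $e_1/4=\epsilon_1/2\notin D_h^*$. In the paper's coordinates this says $[e_1]=(1,\ldots,1,\pm 2)\notin 2G$.
\item Hence $[v/4]=(\pm 2,z)$ with $0\neq z\in\Z_2^{h-1}$.
\item Since $q([v/4])=1/2$, we get $q_{b'}(z):=b'(\tilde z,\tilde z)/2\equiv 0\pmod 2$, i.e.\ $b'(\tilde z,\tilde z)\equiv 0 \pmod 4$ for every lift $\tilde z$.
\item Take $\varepsilon'$ a primitive lift of $z$, and $n=\pm 1$ to match the sign. Then $w=nf+2\varepsilon'+4(e-mg)$ has $w^2=8$, $\operatorname{div}(w)=4$ and $[w/4]=[v/4]$ exactly, so Eichler's criterion finishes.
\end{enumerate}

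Your Nikulin fallback also works once this invariant is known. Witt's theorem makes $\mathrm{O}(q_{b'})$ transitive on nonzero singular vectors of $(\Z_2^{h-1},q_{b'})$.

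Completed this way, your argument is somewhat leaner than the paper's. The length condition comes for free from the hyperbolic correction, with no mod-$32$ bookkeeping. Nothing about $\beta$ is used beyond the fact that it induces an isomorphism of discriminant groups.
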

\begin{proof}
	By Lemma~\ref{lem:induce} we have that~$\beta$ induces an isometry 
	\[
	(\Z_2^{h-1},0)\oplus H(\Z_2)^{\oplus 2}\xrightarrow{\cong}(\Z_2^{h-1},0)\oplus H(\Z_2)^{\oplus 2}
	\]
	if and only if~$\beta(e_1)=e_1 +2\varepsilon$ where $\varepsilon\in \Z_-^{h+4}$. 
	This may not be true for~$\beta$, but the aim is to precompose~$\beta$ with another isometry~$\varphi$ such that the composition~$\beta \circ \varphi$ does have this property by using Eichler's criterion~\cite[Section 10]{Eichler} (see also e.g.~\cite[Lemma~7.5]{Gritsenko}). 
	Essentially we will arrange for Eichler's criterion to hold for the pair~$e_1$ and~$\beta^{-1}(e_1)$ at the expense of having to add some~$2\varepsilon$ to~$e_1$.
	
	We work with the inverse isometry (and thus on~$2u^{nd}_{ext}$) to avoid having to work with the particulars of the form~$b$.
Write $A_u$ for the matrix from~\eqref{eq:COPMatrix} and choose a Smith normal form $P_uA_uQ_u=D$; here recall that~$D=\operatorname{diag}(1,\ldots,1,4).$
	
	Eichler's criterion requires that the primitive vectors~$e_1$ and~$\beta^{-1}(e_1)$ both have the same length,  divisibility with respect to~$2A_u \oplus H(\Z_-)^{\oplus 2}$,  and that they determine the same element in~$\Z^{h+4}/(2A_u\oplus H(\Z_-)^{\oplus 2})$.
	Here, \emph{the element determined by~$x$} in~$\Z^{h+4}/(2A_u\oplus H(\Z_-)^{\oplus 2})$ refers~to
	$$(2A_u\oplus H(\Z_-)^{\oplus 2})(x)/\operatorname{div}(x) \in \Z^{h+4}/(2A_u\oplus H(\Z_-)^{\oplus 2}) \cong \Z^h/2A_u.$$
In what follows, we write~$\wt{P}_u$ for the Smith normal form of~$2A_u\oplus H(\Z)^{\oplus 2}$ obtained by taking the direct sum of~$P_u$ with an arbitrary Smith normal form for~$H(\Z)^{\oplus 2}.$
In order to perform explicit calculations, we make an explicit choice of $P_u$, namely (for $h=9)$:
	\[
	P_u := \begin{pmatrix}
		0 & 1 & 0 & 0 & 0 & 0 & 0 & 0 & 0  \\
		0 & 0 & 1 & 0 & 0 & 0 & 0 & 0 & 0  \\
		0 & 0 & 0 & 1 & 0 & 0 & 0 & 0 & 0  \\
		0 & 0 & 0 & 0 & 1 & 0 & 0 & 0 & 0  \\
		0 & 0 & 0 & 0 & 0 & 1 & 0 & 0 & 0  \\
		0 & 0 & 0 & 0 & 0 & 0 & 1 & 0 & 0  \\
		0 & 0 & 0 & 0 & 0 & 0 & 0 & 1 & 0  \\
		0 & 0 & 0 & 0 & 0 & 0 & 0 & 0 & 1  \\
		1 & 2 & 2 & 2 & 2 & 2 & 2 & 2 & 2 
	\end{pmatrix}.
	\]
We will use the isomorphism~$\widetilde{P}_u=P_u \colon \Z^h/2A_u \xrightarrow{\cong} G$ to verify Eichler's third condition in~$G$.
Here, we are identifying~$G \oplus 0^{\oplus 2}$ with~$G$.
From now on, for $x \in \Z^{h+4}$, we write
$$[x]:=\widetilde{P}_u (2A_u \oplus H(\Z_-)^{\oplus 2})(x)/\divv(x) \in G.
$$
Note that~$e_1$ has length~$8$, divisibility~$4$ and that, the corresponding element in~$G$ is 
	$$[e_1]=\widetilde{P}_u (2A_u \oplus H(\Z_-)^{\oplus 2})(e_1)/4=(1,1,1,\dots,1,\pm 2)\in G.$$
	We now use the assumption that $b=(1)\oplus b'$ where $b'$ is even.  We deduce that $2b^{nd}_{ext}\cong (8)\oplus 2b'$.  Fix a matrix representative $A_{b'}$ for $b'$, hence $2A_b=(8)\oplus 2A_{b'}$ is a representative for $2b^{nd}_{ext}$.  We then fix $P_b$ a Smith normal form matrix for $2A_b$ given as
	\[
	P_b = \begin{pmatrix}
		0 & A_{b'}^{-1} \\
		1 & 0
	\end{pmatrix}.
	\]
Writing~$P_b(2A_b)=DQ_b$, a rapid verification shows that~$\det(P_b(2A_b))=\pm \det(D)$ so that~$Q_b$ is indeed invertible over~$\Z$.
In what follows, we write~$\wt{P}_b$ for the Smith normal form for~$2A_b\oplus H(\Z_-)^{\oplus 2}$ obtained by taking the direct sum of~$P_b$ with an arbitrary Smith normal form for~$H(\Z_-)^{\oplus 2}.$

	By inspection, $e_1$ has length and divisibility both equal to $8$ in $2A_b$.  Using~$x_1,y_1$ to denote the generators of the first hyperbolic summand,~$e_1+4x_1$ is seen to still be primitive and have divisibility~$4$.
	Also, since~$x_1$ has length $0$, the length of~$e_1+4x_1$ is still $8$. 
    It follows that 
    \[\widetilde{P}_b (2A_b \oplus H(\Z_-)^{\oplus 2})(e_1+4x_1)/4=(0,0,\dots,0, 2) \in G.\]
	Now we swap to considering~$\beta^{-1}(e_1+4x_1)$ which we denote~$e_1'$. 
	Since~$\beta$ is an isometry,~$e_1'$ is still primitive with divisibility~$4$ and length $8$.  Furthermore, since~$\beta^{-1}$ lifts~$\pm\id$ on~$G$,  and since $\wt{P}_u,\wt{P}_b$ respectively agree with $P_u,P_b$ on $G$, it follows that
    $$[e_1']
    :=\widetilde{P}_u (2A_u \oplus H(\Z_-)^{\oplus 2})(e_1')/4
=\pm \widetilde{P}_b (2A_b \oplus H(\Z_-)^{\oplus 2})(e_1+4x_1)/4
    =(0,0,\dots,0,\pm 2).$$
We first arrange for Eichler's third condition to hold; after that we will arrange for the length condition.  At every step we avoid changing the divisibility.  Up to changing the sign of~$\beta$ (by changing the sign of the initial isomorphism~$\psi$ in the proof of Proposition~\ref{prop:alpha-}), this means that
    $$[e_1']-[e_1]=(1,1,\dots,1,0).$$
    We need to kill this difference by adding even elements: we do this to $e_1$.
To do so, consider for~$i=1,\ldots,h-1$,
	 the vector $v_i=(2,2,\dots, 2, 0, 2, \dots, 2,0,0,0,0) \in \Z^{h+4}$, where the first zero is in the~$(i+1)$-th slot. 
    A short calculation shows that these satisfy 
    $$[v_i]=(0,0,\dots,1,\dots,0,\ell) \in G$$ 
with the $1$ in the~$i$-th slot and where~$\ell=0$ or~$4$.  
Observe that $[e_1+v_i]$ is still order $4$ in $G$ and, since $e_1+v_i \in \Z^{h+4}$ is primitive, it follows that its divisibility is $4$.
Since $v_i$ and $e_1$ also have divisibility $4$, we note that this implies~$[e_1+v_i]=[e_1]+[v_i]$.
	 By adding each of these vectors in turn we obtain that $[e_1+\sum_i v_i]=[e_1']$, again up to changing the sign of $\beta$.

At this stage, we have established Eichler's second and third conditions for $e_1'$ and $e_1 + \sum_i v_i$, and so it remains to further modify the latter vector so as to arrange the first of Eichler's conditions which, recall, concerns length.

We assert that the lengths of $e_1$ and $e_1+\sum_i v_i$ differ by a multiple of $32$.
For brevity, we write~$x \cdot y:=x^T(2A_u)y,$ so that a routine calculation shows that for all $i$
	\begin{equation}\label{eq:cal1}
		2e_1\cdot v_i + v_i\cdot v_i\equiv 16 \pmod{32}
	\end{equation}
	and for all $i<j$
	\begin{equation}\label{eq:cal2}
		2v_i\cdot v_j\equiv 16\pmod{32}.
	\end{equation}
	We then calculate
	\begin{align*}
        \left(e_1+\sum_i v_i\right)\cdot \left(e_1+\sum_i v_i\right)- e_1\cdot e_1 &= 2e_1\cdot \sum_i v_i + \sum_i v_i\cdot \sum_i v_i \\
		&= \left(\sum_i 2e_1\cdot v_i + v_i\cdot v_i\right) + \left(\sum_{i<j} 2v_i\cdot v_j\right) \\
		&\equiv  16(h-1) + 16\binom{h-1}{2}  \pmod{32}\\
		&= 16\binom{h}{2} = 16\frac{h(h-1)}{2} \\
		&\equiv 0 \pmod{32}.
	\end{align*}
	The first congruence follows from (\ref{eq:cal1}) and (\ref{eq:cal2}). 
    The following equality follows from Pascal's triangle.
    The final congruence follows since $h-1$ is divisible by four (since $b'$ must have rank divisible by eight).
This concludes the proof of the assertion.
	
	Hence the length of~$e_1+\sum_i v_i$ differs from the length of~$e_1$ by a multiple of 32, say $32n$. Now observe that~$e_1+\sum_i v_i+(4nx_1-4y_1)$ is a vector with the desired length, 
    divisibility 
    and satisfies 
    \[ \left[e_1+\sum_i v_i+(4nx_1-4y_1)\right]=[e'_1].\] 
Apply Eichler's criterion to obtain the desired isometry $\varphi$ with~$\varphi(e_1+\sum_i v_i+(4nx_1-4y_1))=e_1'$.
Here, recall that $e_1':=\beta^{-1}(e_1+4x_1)$.
Set~$\alpha_-:=\beta \circ \varphi$.  A short calculation using the linearity of these isometries verifies that $\alpha_-(e_1)=e_1+2\varepsilon$ for some $\varepsilon$.
\end{proof}

\subsection{Building the stable isometry $\alpha_+$ of the plus forms}
\label{sub:alpha+}

Proposition~\ref{prop:alpha-Fix} ensures the existence of an isometry 
	\[
	\alpha_-\colon (\Z_-^h,2u^{nd}_{ext})\oplus H(\Z_-)^{\oplus 2}\xrightarrow{\cong} (\Z_-^h,2b^{nd}_{ext})\oplus H(\Z_-)^{\oplus 2}.
	\] 
	
The goal of this section is to show that if $\alpha_-$ descends to an isometry $\alpha_2$, then the latter lifts to an isometry~$\alpha_+$.

\begin{proposition}
\label{prop:alpha+}
Fix an odd definite non-singular symmetric bilinear form~$b \colon \Z^h \times \Z^h \to \Z$.
If an isometry
	\[
	\alpha_-\colon (\Z_-^h,2u^{nd}_{ext})\oplus H(\Z_-)^{\oplus 2}\xrightarrow{\cong}(\Z_-^h,2b^{nd}_{ext})\oplus H(\Z_-)^{\oplus 2}.
	\] 
descends to an isometry $\alpha_2$, then there exists an isometry 	
$$ \alpha_+ \colon (\Z_+^{h-1},0)\oplus H(\Z_+)^{\oplus 2} \xrightarrow{\cong} (\Z_+^{h-1},0)\oplus H(\Z_+)^{\oplus 2} $$ 
that induces the same isometry as~$\alpha_-$ on 
$$
 (\Z_2^{h-1},0)\oplus H(\Z_2)^{\oplus 2} \xrightarrow{\cong}  (\Z_2^{h-1},0)\oplus H(\Z_2)^{\oplus 2} .
$$
\end{proposition}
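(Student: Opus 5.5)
The plan is to lift $\alpha_2$ directly, exploiting the fact that the plus forms are so simple. Write $V_+:=(\Z_+^{h-1},0)\oplus H(\Z_+)^{\oplus 2}$ and $V_2:=(\Z_2^{h-1},0)\oplus H(\Z_2)^{\oplus 2}$, with reduction map $\red_2\colon V_+\to V_2$. Since the action of $T$ on $\Z_+$ is trivial, any $\Z$-linear map $V_+\to V_+$ is automatically a $\Z[\Z_2]$-map. So it suffices to find a matrix $A\in GL_{h+3}(\Z)$ that preserves the form $0_{h-1}\oplus H(\Z)^{\oplus 2}$ and reduces mod $2$ to the matrix of $\alpha_2$.

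First, I would describe $\alpha_2$ explicitly. It preserves $0\oplus H(\Z_2)^{\oplus 2}$, whose radical is $R_2:=\Z_2^{h-1}\oplus 0$. Hence $\alpha_2$ preserves $R_2$, and in block form it reads
$$\begin{pmatrix} P & C\\ 0 & S\end{pmatrix},$$
with $P\in GL_{h-1}(\Z_2)$, $C$ an arbitrary $(h-1)\times 4$ matrix over $\Z_2$, and $S\in O(H(\Z_2)^{\oplus 2})$. Similarly, an isometry of $V_+$ must preserve the radical $\Z^{h-1}\oplus 0$. Conversely, any integral matrix
$$\begin{pmatrix} \widetilde P & \widetilde C\\ 0 & \widetilde S\end{pmatrix}$$
with $\widetilde P\in GL_{h-1}(\Z)$, $\widetilde S\in O(H(\Z)^{\oplus 2})$ and $\widetilde C$ arbitrary is an isometry of $V_+$. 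This is because the radical block pairs trivially with everything, so the off-diagonal block $\widetilde C$ does not affect the form.

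The lifting problem therefore splits into three independent parts.
\begin{enumerate}
\item Lift $C$ to any integer matrix $\widetilde C$; this is trivial.
\item Lift $P$ to $\widetilde P$. This uses surjectivity of $SL_{h-1}(\Z)\to SL_{h-1}(\Z_2)=GL_{h-1}(\Z_2)$, a standard consequence of strong approximation, or more concretely of the fact that $SL_n(\Z_2)$ is generated by elementary matrices, each of which lifts.
\item Lift $S$ to $\widetilde S$. This uses surjectivity of $O(H(\Z)^{\oplus 2})\to O(H(\Z_2)^{\oplus 2})$.
\end{enumerate}

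Part (3) is the main obstacle. One must be careful about which $\Z_2$ isometry group arises, because the reduction of the even form $H(\Z)^{\oplus2}$ carries a quadratic refinement. The subtle point is that $\alpha_2$ is a priori only an isometry of the mod-$2$ symmetric bilinear form, whereas integral isometries of $H(\Z)^{\oplus 2}$ reduce to isometries preserving the quadratic refinement. I expect that the $\alpha_2$ coming from $\alpha_-$ does preserve this refinement: it is induced from an isometry of the even lattice $2b^{nd}_{ext}\oplus H(\Z_-)^{\oplus 2}$, and on the relevant quotient the values $x\cdot x/2 \bmod 2$ are respected. If this check causes trouble, I would first try to verify preservation of the refinement directly using the even lattice structure upstairs.

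Given the refinement, $O(H(\Z_2)^{\oplus 2})$ is the orthogonal group $O_4^+(2)$ of a split quadratic form. It is generated by transvections/reflections in anisotropic vectors and by the coordinate swaps within each hyperbolic pair. Each such generator lifts to $O(H(\Z)^{\oplus 2})$:
\begin{itemize}
\item a reflection in $v$ with $q(v)=1$ lifts to the integral reflection in a vector $\widetilde v$ of norm $\pm 2$, for instance $x_i\pm y_i$ or $x_1+y_2$;
\item the swaps lift directly.
\end{itemize}
Alternatively, one can use the identification $O(H(\Z)^{\oplus 2})\supset SL_2(\Z)\times SL_2(\Z)$, acting on $2\times 2$ integer matrices with the determinant form, and reduce mod $2$ to $SL_2(\Z_2)\times SL_2(\Z_2)$ together with the transpose. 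This gives surjectivity by comparing with $|O_4^+(2)|=72$.

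Assembling $\widetilde P$, $\widetilde C$ and $\widetilde S$ gives $\alpha_+$, and by construction it reduces to $\alpha_2$.
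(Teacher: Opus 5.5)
Your route is the paper's: a block upper-triangular $\alpha_2$, arbitrary lifts of the off-diagonal block, an invertible lift of the radical block, and a lift of the $4\times 4$ block $S$ once $S$ is known to preserve $q=x_1y_1+x_2y_2$. Your ``alternative'' for part (3) uses $SL_2(\Z)\times SL_2(\Z)$ acting on $(M_2(\Z),\det)$, together with transposition, counted against $|O_4^+(2)|=72$. This is exactly the paper's $L_AR_BT^\varepsilon$ argument; you should add the check that the $72$ reductions are pairwise distinct.

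However, the argument you list first for part (3) is false. $O_4^+(2)$ is the classical exceptional case in which orthogonal transvections do \emph{not} generate the orthogonal group. In the model $(M_2(\Z_2),\det)$, the six anisotropic vectors are the elements of $GL_2(\Z_2)\cong S_3$. The transvection in $g$ is $X\mapsto g\operatorname{adj}(X)g$, and the product of two of them is $X\mapsto (gh^{-1})X(h^{-1}g)$, whose left and right factors have the same sign in $S_3$. Hence the transvections generate a subgroup of order at most $36$. For instance, it misses $X\mapsto X\bsm 0&1\\1&0\esm$, which swaps the two hyperbolic planes. Adding the swaps $x_i\leftrightarrow y_i$ does not help, since these are themselves the transvections in $x_i+y_i$. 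So only the counting argument works.

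Second, you leave the preservation of the refinement as an expectation, but it is the only place where the hypothesis that $\alpha_2$ comes from $\alpha_-$ enters. Evenness of $2b^{nd}_{ext}\oplus H(\Z_-)^{\oplus 2}$ alone does not give it.
\begin{itemize}
\item The needed input is that $b^{nd}_{ext}$ is itself even, so $x^T(2b^{nd}_{ext})x\equiv 0\pmod 4$ for every $x$.
\item Take $\widetilde w$ in the hyperbolic summand and write $\alpha_-(\widetilde w)=(x,w')$. Comparing norms gives $2q(\widetilde w)=x^T(2b^{nd}_{ext})x+2q(w')$.
\item Hence $q(\widetilde w)\equiv q(w')\pmod 2$, i.e.\ $q(S\bar w)=q(\bar w)$.
\end{itemize}
This is precisely the paper's Claim~\ref{claim:Forbidden}. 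The paper checks it on the four hyperbolic basis vectors, which suffices because $S$ already preserves the polar form. Without it, $S$ could be an arbitrary element of $\operatorname{Sp}(4,\Z_2)$, which need not lift to an integral isometry.
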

\begin{proof}
We view this as a lifting problem.
Consider the diagram
	\[
	\begin{tikzcd}
(\Z_+^{h-1},0)\oplus H(\Z_+)^{\oplus 2} \ar[r,dotted, "\alpha_+"]\ar[d,"\red_2\oplus \red_2"]&
(\Z_+^{h-1},0)\oplus H(\Z_+)^{\oplus 2}\ar[d,"\red_2\oplus \red_2"] \\
		(\Z_2^{h-1},0)\oplus H(\Z_2)^{\oplus 2} \ar[r,"{\alpha_2,\cong}"]& 
		(\Z_2^{h-1},0)\oplus H(\Z_2)^{\oplus 2}.
	\end{tikzcd}
	\]  
That the vertical maps are as we claim follows from Convention~\ref{conv:Identification} for the first summand and Example~\ref{ex:Hyperbolic} for the second summand.

We wish to lift $\alpha_2$ to an isometry $\alpha_+$. 
The underlying map $\alpha_2$ is determined by a matrix with entries in $\Z_2$ that we write as the block matrix
 \[\begin{pmatrix}
		A & B \\
		C & D
	\end{pmatrix}\]
	where the matrix $D$ is a $4\times 4$ block.  
A rapid calculation using that $\alpha_2$ is an isometry is seen to imply that $D$ represents an isometry~$H(\Z_2)^{\oplus 2}\to H(\Z_2)^{\oplus 2}$. 
Another rapid calculation also shows that since $\alpha_2$ is an isometry and~$D$ is invertible, $C$ must be the zero matrix:~$C=\mathbf{0}$.  
Since~$\alpha_2$ is an isomorphism, it follows that~$A$ is invertible.

	  \begin{claim}
	  \label{claim:Forbidden}
Writing $H:=H(\Z_+)^{\oplus 2}$ and $Q=\bsm 0&1 \\ 0&0\esm^{\oplus 2}$, the matrix $D$ satisfies~$D^TH^{\oplus 2}D=H^{\oplus 2} \mod 2$ and its columns satisfy~$c^TQc=0 \mod 2$.
	  \end{claim}
	  \begin{proof}[Proof of Claim~\ref{claim:Forbidden}]
We have already argued that $D$ preserves $H$ so we focus on the second assertion.
Write~$(\lambda_u)_-^{stab}:=2u_{ext}^{nd} \oplus H(\Z_-)^{\oplus 2}$ and~$(\lambda_b)_-^{stab}:=2b_{ext}^{nd} \oplus H(\Z_-)^{\oplus 2}$ for brevity.
From its definition we deduce that $b^{nd}_{ext}$ has diagonal entries even, and so $2b^{nd}_{ext}$ has diagonal entries divisible by four and off-diagonal entries even.
Use~$e_1,f_1,e_2,f_2$ to denote the canonical basis of the hyperbolic summand in $\Z_-^h\oplus H(\Z_-)^{\oplus 2}$.

Since the maps $H(\Z_{-})^{\oplus 2} \to H(\Z_2)^{\oplus 2}$ in~\eqref{eq:Pullback1} and~\eqref{eq:Pullback2} are given by reduction mod $2$,  the map~$\alpha_-$ can be represented by a $2\times 2$ block matrix whose lower right corner is given by a $4\times 4$ matrix of the form $(d_{ij})_{1\leq i,j\leq 4}$ that reduces mod $2$ to $D$.

With this notation, $\alpha_-(e_1) = (x \ d_{11} \ d_{21} \ d_{31} \ d_{41})^T$ for some $x \in \Z^h$. 
Note that $(\lambda_u)_-^{stab}(e_1, e_1) =~0$. 
Since $\alpha_-$ is an isometry, it follows that
\begin{align*}
0 &= (\lambda_b)_-^{stab}(\alpha_-(e_1), \alpha_-(e_1)) \\
&= x^T(2b^{nd}_{ext})x + 
\begin{pmatrix} 
d_{11} & d_{21} & d_{31} & d_{41} 
\end{pmatrix}
\begin{pmatrix} 
0 & 1 & 0 & 0 \\ 
1 & 0 & 0 & 0 \\ 
0 & 0 & 0 & 1 \\ 
0 & 0 & 1 & 0 
\end{pmatrix} 
\begin{pmatrix} 
d_{11} \\ d_{21} \\ d_{31} \\ d_{41} 
\end{pmatrix} \\
&= x^T(2b^{nd}_{ext})x + 2(d_{11}d_{21} + d_{31}d_{41}).
\end{align*}
Since $2b^{nd}_{ext}$ has diagonal entries that are multiples of $4$ and off-diagonal entries that are even, we deduce that $x^T(2b^{nd}_{ext})x \equiv 0 \pmod{4}$. 
It follows that $2(d_{11}d_{21} + d_{31}d_{41}) \equiv 0 \pmod{4}$, which forces the sum $d_{11}d_{21} + d_{31}d_{41}$ to be even. 
This implies that $(d_{11}, d_{21}, d_{31}, d_{41})Q(d_{11}, d_{21}, d_{31}, d_{41})^T=0 \mod 2$.
Repeating the argument with $f_1, e_2$ and $f_2$ sequentially in place of $e_1$ shows that all four column vectors of~$D$ satisfy $c^TQc=0$.
This concludes the proof of Claim~\ref{claim:Forbidden}.
	  \end{proof}

\begin{claim}
\label{claim:IntegralLift}
Every $4 \times 4$ matrix $X$ over $\Z_2$ that satisfies~$X^THX=H \mod 2$ and whose columns satisfy $c^TQc=0$ lifts to a $4 \times 4$ matrix $\widetilde{X}$ over $\Z$ with $\widetilde{X}^TH(\Z_+)^{\oplus 2}\widetilde{X}=H(\Z_+)^{\oplus 2}.$
\end{claim}	  
\begin{proof}
Consider the quadratic form~$q \colon \Z_2^4 \to \Z_2,x \mapsto x^TQx$ whose associated bilinear form is~$H$.
A verification shows that the subgroup of~$\operatorname{Sp}(4,\Z_2)$ of matrices whose columns satisfy~$c^TQc=0$ is precisely the subgroup~$O^+_4(2)$ of~$\operatorname{Sp}(4,\Z_2)$ that consists of matrices~$M$ with~$q(Mx)=q(x)$ for every~$x$.
This latter group is known to have $72$ elements~\cite[Proposition 2.5.3(i), Equation~(2.5.4) and Proposition 2.5.5]{KleidmanLiebeck}

We begin our construction of the lifts of the elements of~$O_4^+(2)$.
First, to every integral matrix~$A =\bsm a&b \\ c &d \esm \in SL_2(\Z)$,  we associate the integral matrices
$$
L_A=
\begin{pmatrix}
a&0&0&-b \\
0&d&c&0\\
0&b&a&0 \\
-c&0&0&d
\end{pmatrix}
\quad \text{and} \quad
R_A=
\begin{pmatrix}
d&0&-c&0 \\
0&a&0&b\\
-b&0&a&0 \\
0&c&0&d
\end{pmatrix}.
$$
We also consider the matrix 
$$T
=\begin{pmatrix}
1&0&0&0 \\
0&1&0&0\\
0&0&0&-1 \\
0&0&-1&0
\end{pmatrix}.
$$
The matrices~$L_A,R_A$ and~$T$ are seen to preserve (the integral version of) $H$.
Next, we consider the following integral lifts of the $6$ elements of $SL_2(\Z_2)$:
	  \[
	  A_1:=\begin{pmatrix}
	  	1 & 0 \\
	  	0 & 1
	  \end{pmatrix},
	  A_2:=\begin{pmatrix}
	  	0 & 1 \\
	  	1 & 0
	  \end{pmatrix},
	  A_3:=\begin{pmatrix}
	  	1 & 1 \\
	  	0 & 1
	  \end{pmatrix},
	  A_4:=\begin{pmatrix}
	  	1 & 0 \\
	  	1 & 1
	  \end{pmatrix},
	  A_5:=\begin{pmatrix}
	  	1 & 1 \\
	  	1 & 0
	  \end{pmatrix},
	  A_6:=\begin{pmatrix}
	  	0 & 1 \\
	  	1 & 1
	  \end{pmatrix}.
	  \] 
Given two matrices $A_i,A_j \in SL_2(\Z)$ as above and $\varepsilon \in \{0,1\}$, we associate the matrix 
$$M(A_i,A_j,\varepsilon)=L_{A_i}R_{A_j}T^\varepsilon.$$
There are at most $6 \cdot 6\cdot 2= 72$ such matrices,  all of which preserve $H$ and hence this determines at most 72 elements of~$\operatorname{Sp}(4,\Z_2)$.  One also verifies that their columns satisfy~$c^TQc=0 \mod 2$ and hence it follows that the mod $2$ reduction of the~$M(A_i,A_j,\varepsilon)$ define elements of~$O^+_4(2)$.

It only remains to show that the assignment~$(A,B,\varepsilon) \mapsto M(A,B,\varepsilon)$ is injective.
Here recall that the matrices~$A,B \in \{A_1,\ldots,A_6\}$ are integral.
Establishing injectivity amounts to proving that~$L_A=R_BT^\varepsilon$ mod~$2$ implies~$A=I,B=I$ and~$\varepsilon=0$.
Note that~$(L_A)^{-1}=L_{A^{-1}}$ mod~$2$ and~$(R_A)^{-1}=R_{A^{-1}}$ mod~$2$.
When~$\varepsilon=0$,   if~$L_AR_B=I$ mod~$2$, then~$L_A=R_{B^{-1}}$ which is seen to imply that~$A=I=B^{-1}$ mod~$2$.
For~$A,B \in  \{A_1,\ldots,A_6\}$, this implies that~$A=I=B$.
The case~$\varepsilon=1$ leads to a contradiction,  thus concluding the proof of injectivity.
\end{proof}

Using Claim~\ref{claim:IntegralLift} we know that the matrix $D$ lifts to a isometry of $H(\Z_+)^{\oplus 2}$ which we denote by~$D'$.
 For the matrices $A$ and $B$ we are free to pick any integral lifts $A'$, $B'$; they play no role in the resulting integral matrix being an isometry.   Since $A$ is invertible and~$GL_{h-1}(\Z)\to GL_{h-1}(\Z_2)$ is surjective, we can assume that $A'$ is invertible.
	   It follows that the following matrix is also invertible:
	\[
\alpha_+:= \begin{pmatrix}
		A' & B' \\
		\mathbf{0} & D'
	\end{pmatrix}.
	\]
	Since $D'$ is an isometry of $H(\Z_+)$,  so is~$\alpha_+$.  \qedhere
\end{proof}	

\subsection{Stably realising $\lambda_b$}
\label{sub:StablyRealising}

Recall that $U_h \subset S^4$ denotes the unknotted surface with non-orientable genus~$h$ and normal Euler number~$-2h$, and that we set $(M_{U_h},\lambda_{U_h}):=(H_2(\widetilde{X}_{U_h}),\lambda_{X_{U_h}}).$
Combining the previous propositions with the work of Hambleton-Riehm yields the following result.
\begin{proposition}
\label{prop:alpha}
If~$b'\colon \Z^{h-1} \times \Z^{h-1} \to \Z$ is an even definite non-singular symmetric bilinear form and $b:=b'\oplus(1)$, then there exists a~$\Z[\Z_2]$-isometry
	\[
		\alpha\colon (M_{U_h},\lambda_{U_h})\oplus H(\Z[\Z_2])^{\oplus 2} \xrightarrow{\cong} (M,\lambda_b)\oplus H(\Z[\Z_2])^{\oplus 2}.
	\]
	In particular, the form $\lambda_b$ is stably realisable.
\end{proposition}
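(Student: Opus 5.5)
The plan is to assemble $\alpha$ from the pieces already built, using the pullback description of both stabilised forms and Proposition~\ref{prop:pullback_isometry}. By Examples~\ref{ex:Hyperbolic} and~\ref{ex:directsum}, the twice stabilised form $(M_{U_h},\lambda_{U_h})\oplus H(\Z[\Z_2])^{\oplus 2}$ is the pullback in~\eqref{eq:Pullback1}. The form $(M,\lambda_b)\oplus H(\Z[\Z_2])^{\oplus 2}$ is, by definition, the pullback in~\eqref{eq:Pullback2}. Here the existence and uniqueness of $\lambda_b$ comes from the second item of Theorem~\ref{thm:HambletonRiehmTheorems36}. For this we must check~\eqref{eq:HR2}: the form $2b^{nd}_{ext}$ reduces mod $2$ to the zero form on $\Z_2^{h-1}$ (every entry is even), matching the zero form $(\Z_+^{h-1},0)$ downstairs. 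The underlying modules in both squares are literally the same pullback of the same module maps, since the bottom and right maps of~\eqref{eq:Pullback2} are defined to agree with those of~\eqref{eq:Pullback1}.

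Next I produce the minus and plus isometries. Since $b=b'\oplus(1)$ is odd and definite, Proposition~\ref{prop:alpha-} gives an isometry $\beta$ of the stabilised minus forms. Proposition~\ref{prop:alpha-Fix} then upgrades $\beta$ to an isometry $\alpha_-$ that descends to an isometry $\alpha_2$ of $(\Z_2^{h-1},0)\oplus H(\Z_2)^{\oplus 2}$. By Lemma~\ref{lem:induce}, descending is exactly the condition that $\alpha_-$ preserves the kernel of the right vertical map $\red_2\circ\proj_2$, so $\alpha_2$ is well defined. Proposition~\ref{prop:alpha+} then provides an isometry $\alpha_+$ of $(\Z_+^{h-1},0)\oplus H(\Z_+)^{\oplus 2}$ that reduces mod $2$ to the same $\alpha_2$.

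Now I apply Proposition~\ref{prop:pullback_isometry}. Its hypotheses are that $\alpha_\pm$ are isometries of the $\pm$ forms, both induce maps on the $2$-quotients, and these induced maps agree. All three hold by construction. The universal property of the module pullback gives a module map $\alpha$ with $\proj_\pm\circ\alpha=\alpha_\mp\circ\proj_\pm$. Applying the same construction to $\alpha_\pm^{-1}$, which also agree on the $2$-quotient with $\alpha_2^{-1}$, gives an inverse, so $\alpha$ is a $\Z[\Z_2]$-isomorphism. Uniqueness in Theorem~\ref{thm:HambletonRiehmTheorems36} then shows that $\alpha$ carries $\lambda_{U_h}\oplus H^{\oplus2}$ to $\lambda_b\oplus H^{\oplus 2}$.

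For the final sentence, $(M_{U_h},\lambda_{U_h})$ is realised by the exterior $X_{U_h}$, a filling of $(\partial X_{U_h},\varphi)$ with $\pi_1=\Z_2$. Taking the connected sum of $X_{U_h}$ with two copies of $S^2\times S^2$ realises $\lambda_{U_h}\oplus H(\Z[\Z_2])^{\oplus 2}$, which is isometric via $\alpha$ to the twice stabilised $\lambda_b$. Hence $\lambda_b$ is stably realisable. I expect the main obstacle to be the careful bookkeeping that $\alpha_+$ and $\alpha_-$ really induce the same map on $M_2$ with respect to the fixed identifications of Convention~\ref{conv:Identification}, in particular on the first $\Z_-$ summand, which is killed by $\proj_2$. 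The remaining steps are formal consequences of the earlier propositions.
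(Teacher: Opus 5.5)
Your proposal is correct and matches the paper's proof: you take $\alpha_-$ from Propositions~\ref{prop:alpha-} and~\ref{prop:alpha-Fix} and the compatible lift $\alpha_+$ from Proposition~\ref{prop:alpha+}, glue them via Proposition~\ref{prop:pullback_isometry}, and realise the twice-stabilised unknot form by $X_{U_h}\#_2 S^2\times S^2$. Your extra checks only make explicit details the paper leaves implicit: that $\lambda_b$ is well defined via~\eqref{eq:HR2}, and that the glued map is invertible using $\alpha_\pm^{-1}$.
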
	
\begin{proof}
Proposition~\ref{prop:alpha-} and Proposition~\ref{prop:alpha+} produce a compatible pair of isometries~$\alpha_-$ and~$\alpha_+$.
The existence of $\alpha$ now follows from Proposition~\ref{prop:pullback_isometry}.
The hermitian form~$\lambda_b$ is stably realisable because~$(M_{U_h},\lambda_{U_h})\oplus H(\Z[\Z_2])^{\oplus 2} $ is isometric to the $\Z[\Z_2]$-intersection form of $X_{U_h} \#_{2} S^2\times S^2$.
\end{proof}

	\subsection{Realising the form by a surface}
	\label{sub:ProofEnd}
	
	We now prove the previously mentioned refinement of our main theorem, which we restate for the reader.

\begin{customthm}
{\ref{thm:precise}}
If~$b'\colon \Z^{h-1} \times \Z^{h-1} \to \Z$ is an even definite non-singular symmetric bilinear form, then there exists a non-orientable~$\Z_2$-surface~$F\subset S^4$ such that~$Q_{\Sigma_2(F)} \cong b:=b'\oplus(1)$.
\end{customthm}
	\begin{proof}
	The output of Proposition~\ref{prop:alpha} is that $\lambda_b$ is stably realisable, and hence by applying Proposition~\ref{prop:StablyRealisable}, 	we obtain that $\lambda_b$ is realisable by a~$\Z_2$-manifold~$X'$ with boundary~$Y=\partial X_{U_h}$.
	Here, for the coefficient system we use the inclusion induced map~$\varphi \colon \pi_1(\partial X_{U_h}) \to \pi_1(X_{U_h}) \cong \Z_2$.
	
Since $Y=\partial X_{U_h}$ we then cap off~$X'$ with the normal bundle of $U_h$ in $S^4$ (which has Euler number $-2h$); denote the outcome by~$\widehat{X}'$.
	The choice of $\varphi$ ensures that~$\widehat{X}'$ is a closed simply-connected~$4$-manifold (this follows by a simple Seifert-Van Kampen argument).
	By construction, it contains a locally flat embedded $\Z_2$-surface, namely the $0$-section of the aforementioned disc bundle, which we denote by $F$.
	\begin{claim}
	\label{claim:ItsS4}
		The~$4$-manifold~$\widehat{X}'$ satisfies~$b_2(\widehat{X}')=0$.
	\end{claim}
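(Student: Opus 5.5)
The plan is to compute $b_2(\widehat{X}')$ from the Euler characteristic. Since $\widehat{X}'$ is closed and simply connected, $b_1=b_3=0$ and $\chi(\widehat{X}')=2+b_2(\widehat{X}')$. It therefore suffices to prove $\chi(\widehat{X}')=2$. Write $\widehat{X}'=X'\cup_Y N$, where $N$ is the disc bundle over $U_h$. Since $\chi(Y)=0$ for the closed $3$-manifold $Y$, we get
$$\chi(\widehat{X}')=\chi(X')+\chi(N)=\chi(X')+\chi(\#_h\R P^2)=\chi(X')+2-h.$$
The same decomposition applied to $S^4=X_{U_h}\cup_Y N$ gives $2=\chi(X_{U_h})+2-h$, so $\chi(X_{U_h})=h$. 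The claim therefore reduces to showing $\chi(X')=\chi(X_{U_h})=h$.

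To compare these Euler characteristics, pass to universal covers, which are $2$-fold covers, so that $\chi(\widetilde{X}')=2\chi(X')$ and $\chi(\widetilde{X}_{U_h})=2\chi(X_{U_h})$. Both $\widetilde{X}'$ and $\widetilde{X}_{U_h}$ are simply connected with nonempty connected boundary. Connectedness holds because $\varphi$ is surjective, so the preimage of $Y$ is connected. It follows that $H_1=0$, and by duality $H_3(\widetilde X)\cong H^1(\widetilde X,\partial\widetilde X)=0$, as noted in Section~\ref{sec:StableRealisation}; also $H_4=0$. Hence $\chi(\widetilde{X})=1+\operatorname{rk}H_2(\widetilde{X})$ in both cases.

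Finally, $H_2(\widetilde{X}')\cong M$ as $\Lambda$-modules because $X'$ realises $\lambda_b$, and by construction $M\cong \Z_-\oplus\Lambda^{h-1}\cong H_2(\widetilde{X}_{U_h})$. Both have $\Z$-rank $2h-1$. Therefore $\chi(\widetilde X')=\chi(\widetilde X_{U_h})=2h$, so $\chi(X')=h$ and hence $b_2(\widehat{X}')=0$.

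The only delicate point is making sure the identification $H_2(\widetilde X')\cong M$ really carries the module $\Z_-\oplus\Lambda^{h-1}$ of rank $2h-1$. This follows from how $M$ was defined in the pullback~\eqref{eq:Pullback2}, modelled on $M_{U_h}$, together with Proposition~\ref{prop:StablyRealisable}, which returns a manifold realising exactly $(M,\lambda_b)$ and not a stabilisation of it. The remaining steps are routine.
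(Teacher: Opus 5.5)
Your proof is correct and is essentially the paper's argument. Both compute $\chi(\widetilde{X'})=1+(2h-1)=2h$ from the rank of $H_2(\widetilde{X'})\cong \Z_-\oplus\Lambda^{h-1}$, halve it to get $\chi(X')=h$, and then use additivity of $\chi$ over $\widehat{X}'=X'\cup_Y N$ to conclude $\chi(\widehat{X}')=2$. The differences are cosmetic: you read off the rank $2h-1$ directly from the realisation $H_2(\widetilde{X'})\cong M$, whereas the paper re-derives the module structure via the argument of \cite[Proposition 4.12]{ConwayOrsonPowell}, and your comparison with $\chi(X_{U_h})=h$ is a harmless but unnecessary detour.
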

	\begin{proof}[Proof of Claim~\ref{claim:ItsS4}]
		Since the~$4$-manifold~$X'$ has~$\pi_1(X')\cong \Z_2$ and~$\pi_1(Y) \to \pi_1(X') \cong \Z_2$ is surjective,  the argument from~\cite[Proof of Proposition 4.12]{ConwayOrsonPowell} shows that~$H_2(\widetilde{X'}) \cong \Z_- \oplus \Z[\Z_2]^{h-1}.$
		Since~$\widetilde{X'}$ double covers~$X'$,  the multiplicativity of the Euler characteristic gives~$b_2(X')=h-1$:
		$$
		2(1+b_2(X'))
		=2\chi(X')
		=\chi(\widetilde{X'})
		=1+b_2(\widetilde{X'})
		=2+2(h-1).
		$$
		The additivity of the Euler characteristic then shows that~$b_2(\widehat{X}')=0$:
		$$
		2+b_2(\widehat{X}')
		=\chi(\widehat{X}')
		=\chi(X')+\chi(F)
		=(1+b_2(X'))+(2-h)
		=h+(2-h)
		=2.
		$$
		This concludes the proof of Claim~\ref{claim:ItsS4}.
	\end{proof}
	Thus~$\widehat{X}'$ is a closed simply-connected with vanishing~$b_2$.
	It is therefore homeomorphic to~$S^4$ (by Freedman's work~\cite{Freedman}) and contains a~$\Z_2$-surface~$F$ with non-orientable genus $h$,  Euler number~$-2h$, and
exterior having equivariant intersection form isometric to~$\lambda_b$.

It remains to prove that~$Q_{\Sigma_2(F)}\cong b$.
Since~$\lambda_{X_F} \cong \lambda_b$ we deduce that~$(\lambda_{X_F})_- \cong 2Q_{\widetilde{X}_F}^{nd}$ is isometric to~$(\lambda_b)_- \cong 2b_{ext}^{nd}$.
Cancelling the~$2$s, which is possible since these forms are~$\Z$-valued, implies that~$b_{ext}^{nd} \cong Q_{\widetilde{X}_F}^{nd}$. 
 Thus $b$ and $Q_{\Sigma_2(F)}$ share $b_{ext}^{nd} \cong Q_{\widetilde{X}_F}^{nd}$ as an index $2$ subform.  
 Since~$h\not\equiv 4\mod{8}$, results well-known to lattice theorists---e.g.\ \cite[Proposition-Definition~3.2(i) and Proposition~3.8~(i)]{Chenevier} (see Proposition~\ref{prop:indextwo} for a translation of Chenevier's statements to our setting)---imply that $Q_{\Sigma_2(F)}\cong b$.\qedhere
\end{proof}

\appendix
\section{Neighbouring lattices}\label{sec:lattices}

The purpose of this section is to translate elements of the work of Chenevier~\cite{Chenevier} on Kneser neighbours to our setting for the purpose of applying them at the end of the proof of Theorem~\ref{thm:precise}.  This translation is purely formal.

\begin{definition}
	A \emph{lattice} of rank $h$ is a subgroup $L \subset \R^h$ generated by a basis of $\R^h$.  
	Two lattices~$L_1,L_2$ are \emph{isometric} if there is an isomorphism $\R^h\to \R^h$ preserving the standard inner product that sends $L_1$ to $L_2$.
\end{definition}

Given an unimodular lattice $L$,  we pick a generating basis $(v_i)$ and take the \emph{Gram matrix} $M_L$ whose entries are $v_i \cdot v_j$ where $\cdot$ denotes the standard inner product.  
This matrix is only well-defined up to conjugacy, but the underlying non-singular symmetric bilinear form is well-defined and we call it $b_L$.  
This procedure has an inverse.  
Given a positive definite non-singular symmetric bilinear form~$b$, use the standard basis of $\R^h$ to write~$b$ as a symmetric, positive definite matrix~$M$.  
For every such matrix, there exists a decomposition~$M=B^TB$ and the columns of $B$ can now be taken to be a new basis for $\R^h$ such that $M$ is the Gram matrix of~$\cdot$ with respect to this basis.
Taking the span of this basis gives a lattice $L_b$.  
It is not hard to see that $b_{L_b}=b$ and $L_{b_L}=L$.  Similarly, one can check that the concepts of isometry for lattices and forms coincide.

We recall the definition of a neighbouring lattice due to Kneser~\cite{Kneser}; see also e.g.~\cite{Voight} and~\cite[Section 3.1]{Chenevier}).

\begin{definition}
Given a unimodular lattice $L$ and an integer $d \geq 1$, a unimodular lattice~$N$ is a~\emph{$d$-neighbour} of~$L$ if~$L/(L\cap N)\cong \Z_d$, i.e.\ if~$N$ intersects~$L$ precisely in an index $d$ sublattice.
When $d=2$,  we refer to $N$ as a \emph{neighbour} of $L$.
\end{definition}

It is known that~$L$ is a $d$-neighbour of~$N$ if and only if~$N$ is a $d$-neighbour of~$L$; see e.g.~\cite[Section~3.1]{Chenevier}.
Restricting to the case $d=2$,  we discuss the exterior non-degenerate form in the setting of lattices and relate it to neighbours.

Given a unimodular lattice~$L$ and a $d$-primitive vector $x \in L$ (meaning that $[x]\in L/dL$ generates a subgroup of order $d$~\cite[Section 3.1]{Chenevier}), Chenevier defines 
\[
M_d(L;x):=\{m\in L\mid m\cdot x\equiv 0\mod{d}\}.
\] 
When $L$ is odd, characteristic vectors $\xi \in L$ are $2$-primitive, 
in which case $M_2(L;\xi)$ does not depend on the choice of the characteristic vector since
\[
M_2(L):=M_2(L;\xi)
=\{m\in L\mid m\cdot \xi\equiv 0\mod{2}\}
=\{m\in L\mid m\cdot m\equiv 0\mod{2}\}.
\]
This allows us to state the following.

\begin{lemma}\label{lem:formstolattices}
	Let~$b$ and~$c$ be rank~$h$ odd definite non-singular symmetric bilinear forms.  If there is an isometry~$b^{nd}_{ext}\cong c^{nd}_{ext}$,  then there exists an isometry $M_2(L_b)\cong M_2(L_c)$ and hence either $L_b$ is isometric to a neighbour of~$L_c$ sharing $M_2(L_c)$,  or~$L_b\cong L_c$.
\end{lemma}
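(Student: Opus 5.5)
The plan is to pass from forms to lattices via the dictionary $b \mapsto L_b$ recalled above. Under this dictionary the isometry type of $b$ matches that of $L_b$, and the sublattice $M_2(L_b)$ matches $(H_{ext}^{nd},b_{ext}^{nd})$. Concretely, $H_{ext}^{nd}=\{x \mid b(x,x)\equiv 0 \bmod 2\}$ is exactly $\{m\in L_b \mid m\cdot m\equiv 0 \bmod 2\}=M_2(L_b)$ once we identify $H$ with $L_b$ via the chosen basis. The $\Z[\Z_2]$-module structure ($Tx=-x$) plays no role here, since an isometry of exterior forms is in particular an isometry of the underlying $\Z$-lattices.

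\textbf{Step 1.} Take an isometry $\phi\colon b^{nd}_{ext}\to c^{nd}_{ext}$. Using the dictionary, it gives an isometry of lattices $M_2(L_b)\cong M_2(L_c)$ inside $\R^h$. This works because a $\Z$-isometry between full-rank sublattices of $\R^h$ extends $\R$-linearly to an orthogonal map of $\R^h$: the inner products on a basis are preserved.

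\textbf{Step 2.} Let $g\in O(\R^h)$ be this orthogonal extension. Then $L':=g(L_b)$ is isometric to $L_b$, and $M_2(L')=g(M_2(L_b))=M_2(L_c)$. Here we use that $M_2$ is defined intrinsically from the inner product, so it is equivariant under isometries.

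\textbf{Step 3.} Both $L'$ and $L_c$ are unimodular and contain $N:=M_2(L_c)$ with index $2$; the index is $2$ because the forms are odd, by the short exact sequence $0\to H^{nd}_{ext}\to H\to \Z_2\to 0$. So $N\subseteq L'\cap L_c$, and $L_c/(L'\cap L_c)$ is a quotient of $L_c/N\cong\Z_2$, hence trivial or $\Z_2$.
\begin{itemize}
\item If $L_c/(L'\cap L_c)$ is trivial, then $L_c\subseteq L'$. Both are unimodular of the same rank, so covolumes force $L'=L_c$, giving $L_b\cong L_c$.
\item Otherwise $L'\cap L_c=N$ has index $2$ in $L_c$, so $L'$ is a neighbour of $L_c$ sharing $M_2(L_c)$, and $L_b\cong L'$.
\end{itemize}

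The only mildly delicate point is Step 2: one must check carefully that $M_2$ commutes with isometries and that the extension of $\phi$ to $\R^h$ is orthogonal rather than just linear. Both follow because $\phi$ preserves the Gram matrix on a basis of $\R^h$. The rest is bookkeeping with indices.
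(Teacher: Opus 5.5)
Your proposal is correct and follows essentially the same route as the paper. You identify $b^{nd}_{ext}$ with $M_2(L_b)$, transport $L_b$ by the isometry, and note that the intersection with $L_c$ contains $M_2(L_c)$ with index two, so it is either all of $L_c$ (forcing equality) or $M_2(L_c)$ (giving a neighbour). You are slightly more careful than the paper in two places: you extend the isometry to an orthogonal map of $\R^h$, and you use unimodularity to justify why $L_c\subseteq L'$ forces $L'=L_c$.
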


\begin{proof}
	The first statement follows from the fact that $L_{b^{nd}_{ext}}=M_2(L_b)$ which is immediately clear from the definitions.  Let $\psi\colon \Z^h\to\Z^h$ denote the isometry.  Then~$\psi(L_b)\cap L_c$ contains~$M_2(L_c)$
	 and since the intersection must be a sublattice, either $\psi(L_b)\cap L_c=L_c$ or $\psi(L_b)\cap L_c=M_2(L_c)$. 
	In the latter case, then~$\psi(L_b)$ and~$L_c$ are neighbours since~$M_2(L_c)$ is index two in~$L_c$. 
	In the former case then~$\psi(L_b)=L_c$ and so $\psi$ was actually an isometry.
\end{proof}

Whereas the number of~$d$-neighbours of a lattice~$L$ may be large~\cite[Corollary 3.6]{Chenevier},  the number of~$d$-neighbours~$N$ with~$N \cap L=M_d(L;x)$ for a fixed~$d$-primitive vector~$x \in L$ is typically much smaller~\cite[Proposition 3.2]{Chenevier}.
We will state a particular case of this result in the case where~$d=2$ (so that~$e=2$ in Chenevier's notation).
In this setting,  the answer depends on~$x \cdot x$ mod~$8$ but since we are only interested in the case where~$x=\xi$ is characteristic,  we will formulate the results using the rank of the lattice instead: recall that for unimodular lattices,~$h\equiv \xi\cdot \xi \mod 8$; see e.g.~\cite[Lemma~1.2.20]{GompfStipsicz}.

We now state the main result we want to use which is a combination of \cite[Proposition-Definition 3.2 (i)]{Chenevier} and \cite[Proposition 3.8 (i)]{Chenevier}.

\begin{proposition}\label{prop:chenevier}
Let~$L$ be an odd integral unimodular lattice of rank~$h$.  
	\begin{enumerate}
\item If~$h\not\equiv 0,4 \mod 8$,  then~$L$ has no neighbours sharing the index two sublattice~$M_2(L)$.
		\item If~$h\equiv 0\mod{8}$, then~$L$ has precisely two neighbours sharing the index two sublattice~$M_2(L)$, both of which are even.
		\item If~$h\equiv 4\mod{8}$, then~$L$ has precisely two neighbours sharing the index two sublattice~$M_2(L)$, both of which are odd.
	\end{enumerate}
\end{proposition}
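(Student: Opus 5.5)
The plan is to derive Proposition~\ref{prop:chenevier} from Kneser's neighbour construction, in the form recorded by Chenevier, by counting neighbours of $L$ that contain $M:=M_2(L)$. Since $L$ is odd, a characteristic vector $\xi$ is $2$-primitive, and $M=M_2(L;\xi)$ has index $2$ in $L$. The first step is to compute the discriminant group $M^\sharp/M$. We have $M^\sharp = L + \Z\,\tfrac{\xi}{2}$ and $[M^\sharp:M]=4$. The group $M^\sharp/M$ is generated by the classes of $\tfrac{\xi}{2}$ and of any $y\in L\setminus M$. Since $2\cdot\tfrac{\xi}{2}=\xi$, this class has order $2$ if $\xi\in M$ and order $4$ otherwise. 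Now $\xi\cdot\xi\equiv h \pmod 8$ by the fact recalled before the proposition, so $\xi\in M$ precisely when $h$ is even.
\begin{itemize}
\item If $h$ is odd, then $M^\sharp/M\cong\Z_4$.
\item If $h$ is even, then $M^\sharp/M\cong\Z_2\oplus\Z_2$, with nonzero classes $[y]$, $[\xi/2]$ and $[\xi/2+y]$.
\end{itemize}

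Next observe that any unimodular lattice $N$ with $N\cap L=M$ satisfies $M\subset N\subset M^\sharp$ with $[N:M]=2$. Such lattices correspond to order-$2$ elements $c$ of $M^\sharp/M$ for which $N=M+\Z\tilde c$ is integral. Integrality requires $\tilde c\cdot\tilde c\in\Z$; the condition $\tilde c\cdot M\subset\Z$ is automatic. One of these lattices is $L$ itself, given by $c=[y]$. So the task is to count the other isotropic order-$2$ classes, and this is a case analysis.

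\begin{itemize}
\item \textbf{$h$ odd.} The group $\Z_4$ has a unique element of order $2$, which gives $L$. Hence there are no neighbours.
\item \textbf{$h\equiv 2 \pmod 4$.} Here $(\xi/2)^2=\xi\cdot\xi/4\equiv h/4\notin\Z$, so $c=[\xi/2]$ fails integrality. The same holds for $[\xi/2+y]$, since its square is $h/4+\xi\cdot y+y\cdot y$ with $\xi\cdot y+y\cdot y\in\Z$. Hence there are no neighbours. Together with the odd case this proves~(1).
\item \textbf{$h\equiv 0 \pmod 4$.} Both candidates $N_1=M+\Z\tfrac{\xi}{2}$ and $N_2=M+\Z(\tfrac{\xi}{2}+y)$ are integral, with $[N_i:M]=2$, so they are unimodular. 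They are distinct from $L$ and from each other. This gives exactly two neighbours, which proves the counting part of~(2) and~(3).
\end{itemize}

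For parity, $M$ is even by definition. The lattice $N_1$ is even if and only if $h/4$ is even, i.e.\ $h\equiv 0 \pmod 8$. For $N_2$, choose $y$ with $\xi\cdot y$ even, as we may when $h\equiv 0\pmod 4$: this is possible because $\xi\cdot\xi$ is even, so one can adjust $y$ by $\xi$. Then $y\cdot y\equiv\xi\cdot y\equiv 0 \pmod 2$, and $(\xi/2+y)^2=h/4+\xi\cdot y+y\cdot y\equiv h/4 \pmod 2$. So both neighbours are even when $h\equiv0\pmod8$ and both are odd when $h\equiv4\pmod 8$.

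The main obstacle is this parity bookkeeping, in particular checking that the choice of $y$ does not affect the parity of $(\xi/2+y)^2$ modulo $2$. The statement is also exactly \cite[Proposition-Definition~3.2(i), Proposition~3.8(i)]{Chenevier} with $d=2$ and $x=\xi$, so I would cite Chenevier and include the computation above only as a translation check.
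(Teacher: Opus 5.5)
Your argument takes a different route from the paper, and it works apart from one incorrect step. The paper simply quotes Chenevier's Proposition-Definition~3.2(i) with $d=e=2$ and $x=\xi$: there are $2$ or $0$ neighbours sharing $M_2(L)$ according as $\xi\cdot\xi\equiv 0 \pmod 4$ or not. For parity it quotes Remark~3.3 and Proposition~3.8(i): the neighbours are even iff $\xi\cdot\xi\equiv 0\pmod 8$. It then converts to $h$ via $\xi\cdot\xi\equiv h \pmod 8$. You instead reprove this special case directly. You identify $M^\sharp=L+\Z\tfrac{\xi}{2}$, compute $M^\sharp/M$ as $\Z_4$ or $\Z_2\oplus\Z_2$, and test the order-$2$ classes for integrality and parity. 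Your version is self-contained and shows why existence depends on $h \bmod 4$ and parity on $h \bmod 8$. The paper's citation is shorter but opaque. Your counting in all three cases is correct.

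The incorrect step is the parity of $N_2$. You cannot choose $y\in L\setminus M$ with $\xi\cdot y$ even: by definition $M=\{m\in L \mid m\cdot\xi\equiv 0 \pmod 2\}$, so $y\notin M$ means exactly that $\xi\cdot y$ is odd. Shifting $y$ by $\xi$ does not change $\xi\cdot y \bmod 2$, since $\xi\cdot\xi$ is even. Moreover, if $\xi\cdot y$ were even then $[y]=0$ and $N_2$ would coincide with $N_1$.

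The conclusion is still true, and no choice of $y$ is needed. Because $\xi$ is characteristic, $y\cdot y\equiv \xi\cdot y \pmod 2$ for every $y\in L$; here both are odd, so $\xi\cdot y+y\cdot y$ is even. Hence
$$\left(\tfrac{\xi}{2}+y\right)^2=\tfrac{\xi\cdot\xi}{4}+\xi\cdot y+y\cdot y\equiv \tfrac{\xi\cdot\xi}{4}\equiv \tfrac{h}{4}\pmod 2.$$
For $m\in M$, both $m\cdot m$ and $m\cdot\xi$ are even. So the square of $m+k(\tfrac{\xi}{2}+y)$ is congruent to $k^2(\tfrac{\xi}{2}+y)^2$ mod $2$, and $N_2$ has the same parity as $N_1$. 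The same computation settles the obstacle you flagged: replacing $y$ by $y+m'$ with $m'\in M$ changes $(\tfrac{\xi}{2}+y)^2$ by $m'\cdot\xi+2m'\cdot y+m'\cdot m'$, which is even.
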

\begin{proof}
Consider \cite[Proposition-Definition~3.2 (i)]{Chenevier} with $d=2=e$ and~$x=\xi$ characteristic so that $M=M_2(L)=M_2(L;\xi)$.
The first item of this result states the number of neighbours $N$ with $L \cap N=M_2(L)$ is $2$ if $\xi \cdot \xi=0$ mod $4$ and $0$ otherwise.

This directly implies the first item of the proposition, so we assume that $h \equiv 0 \mod 4$ and focus on the next two items.
As mentioned above, in these cases $L$ admits two neigbhours $N$ with with $L \cap N=M_2(L)$.
In~\cite[Remark 3.3]{Chenevier}, Chenevier describes these neighbours explicitly and denotes them by $N_2(L;\xi,0)$ and $N_2(L;\xi,1)$.
Chenevier then shows that these neighbours are even if and only if $\xi\cdot\xi\equiv 0\mod{8}$~\cite[Proposition 3.8 (i)]{Chenevier}.
This directly implies the last two items of the proposition.
\end{proof}

We give the translation to forms in the guise that we utilised in Section~\ref{sec:Proof}.

\begin{proposition}\label{prop:indextwo}
	Let~$b,c$ be odd definite non-singular symmetric bilinear forms of rank~$h$ and assume~$b^{nd}_{ext}\cong c^{nd}_{ext}$.  
	If~$h\not\equiv 4\mod{8}$ then~$b\cong c$.
\end{proposition}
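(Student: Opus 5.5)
The plan is to combine Lemma~\ref{lem:formstolattices} with Proposition~\ref{prop:chenevier}. We first pass to lattices: by the conventions we may assume $b$ and $c$ are positive definite, so they determine unimodular lattices $L_b,L_c\subset\R^h$ with $b_{L_b}=b$ and $b_{L_c}=c$. Both lattices are odd because $b$ and $c$ are odd. Lemma~\ref{lem:formstolattices} applies to the isometry $b^{nd}_{ext}\cong c^{nd}_{ext}$, and it leaves two cases. In the first case $L_b\cong L_c$, and then $b\cong c$, since isometry of lattices and isometry of forms coincide. In the second case there is an isometry $\psi$ such that $\psi(L_b)$ is a neighbour of $L_c$ with $\psi(L_b)\cap L_c=M_2(L_c)$.

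It remains to exclude the second case. We split according to $h \bmod 8$.

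\emph{Case $h\not\equiv 0,4\pmod 8$.} The first item of Proposition~\ref{prop:chenevier} says that $L_c$ has no neighbour sharing $M_2(L_c)$. This contradicts the second case directly.

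\emph{Case $h\equiv 0\pmod 8$.} The second item of Proposition~\ref{prop:chenevier} says that every neighbour of $L_c$ sharing $M_2(L_c)$ is even. But $\psi(L_b)$ is isometric to $L_b$, which is odd, and parity is an isometry invariant. So $\psi(L_b)$ cannot be such a neighbour, and the second case is again impossible.

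In both cases $L_b\cong L_c$, hence $b\cong c$. The case $h\equiv 4\pmod 8$ is excluded by hypothesis. This exclusion is essential: there the two neighbours are odd, so the parity argument gives nothing.

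The main point of care is the second case of Lemma~\ref{lem:formstolattices}. We must check that the shared sublattice really is $M_2(L_c)$ as Chenevier defines it, namely the even sublattice $\{m\mid m\cdot m\equiv 0 \bmod 2\}$, which equals $M_2(L_c;\xi)$ for a characteristic vector $\xi$. The reason is that $\psi$ carries $M_2(L_b)=L_{b^{nd}_{ext}}$ onto $M_2(L_c)$, so $\psi(L_b)\cap L_c$ contains $M_2(L_c)$. Since $\psi(L_b)\neq L_c$ in this case, and $M_2(L_c)$ has index two in $L_c$, the intersection is exactly $M_2(L_c)$. This is precisely the situation Proposition~\ref{prop:chenevier} addresses, so the count of neighbours applies without change.
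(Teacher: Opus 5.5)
Your proposal is correct and follows the paper's proof essentially verbatim: Lemma~\ref{lem:formstolattices} reduces everything to the neighbour case. That case is excluded by item (1) of Proposition~\ref{prop:chenevier} when $h\not\equiv 0,4\pmod 8$, and by item (2) together with the oddness of $L_b$ when $h\equiv 0\pmod 8$; you spell out the parity-invariance step and the identification of the intersection as exactly $M_2(L_c)$, both of which the paper leaves implicit.
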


\begin{proof}

	Apply Lemma~\ref{lem:formstolattices} to deduce that~$L_b$ is isometric to a lattice~$L$ such that either $L=L_c$,  or~$L$ and~$L_c$ are neighbours sharing the index two sublattice~$M_2(L_c)$.  In the former case $L_b \cong L_c$,  so $b \cong c$, as required.  We therefore assume that $L$ and $L_c$ are neighbours over~$M_2(L_c)$.

	If~$h\not\equiv 0,4\mod{8}$, then by (1) of Proposition~\ref{prop:chenevier} we have that~$L$ and~$L_c$ cannot be neighbours over~$M_2(L_c)$, hence~$L=L_c$. 
	We again deduce that~$b\cong c$.
	
	If~$h\equiv 0\mod{8}$, then by (2) of Proposition~\ref{prop:chenevier} we have that all neighbours of~$L$ sharing the sublattice~$M_2(L_c)$ are even, and hence~$L$ and~$L_c$ are not neighbours. 
	 This gives~$L=L_c$ and so we deduce that~$b\cong c$. 
\end{proof}

\bibliographystyle{alpha}
\bibliography{BiblioRealisation}

\newcommand{\etalchar}[1]{$^{#1}$}
\begin{thebibliography}{BKK{\etalchar{+}}21}

\bibitem[BCD{\etalchar{+}}21]{Matrix}
Jonathan Bowden, Diarmuid Crowley, Jim Davis, Stefan Friedl, Carmen Rovi, and
  Stephan Tillmann.
\newblock Open problems in the topology of manifolds.
\newblock In {\em 2019--20 MATRIX annals}, pages 647--659. Cham: Springer,
  2021.

\bibitem[BKK{\etalchar{+}}21]{DET}
Stefan Behrens, Boldizs\'ar Kalm\'ar, Min~Hoon Kim, Mark Powell, and Arunima
  Ray, editors.
\newblock {\em The disc embedding theorem}.
\newblock Oxford University Press, Oxford, 2021.

\bibitem[BKR26]{K3}
{\.I}nan{\c{c}} Baykur, Robion Kirby, and Daniel Ruberman, editors.
\newblock {\em {K3}: a new problem list in low-dimensional topology (to
  appear)}.
\newblock Providence, RI: American Mathematical Society (AMS), 2026.

\bibitem[Che25]{Chenevier}
Ga\"etan Chenevier.
\newblock Unimodular hunting.
\newblock {\em Algebr. Geom.}, 12(6):769--812, 2025.

\bibitem[COP23]{ConwayOrsonPowell}
Anthony Conway, Patrick Orson, and Mark Powell.
\newblock Unknotting nonorientable surfaces.
\newblock 2023.
\newblock \url{https://arxiv.org/abs/2306.12305}.

\bibitem[CP23]{ConwayPowell}
Anthony Conway and Mark Powell.
\newblock Embedded surfaces with infinite cyclic knot group.
\newblock {\em Geom. Topol.}, 27(2):739--821, 2023.

\bibitem[CS99]{ConwaySloane}
J.~H. Conway and N.~J.~A. Sloane.
\newblock {\em Sphere packings, lattices and groups}, volume 290 of {\em
  Grundlehren der mathematischen Wissenschaften [Fundamental Principles of
  Mathematical Sciences]}.
\newblock Springer-Verlag, New York, third edition, 1999.
\newblock With additional contributions by E. Bannai, R. E. Borcherds, J.
  Leech, S. P. Norton, A. M. Odlyzko, R. A. Parker, L. Queen and B. B. Venkov.

\bibitem[Don83]{Donaldson}
Simon Donaldson.
\newblock An application of gauge theory to four dimensional topology.
\newblock {\em J. Differ. Geom.}, 18:279--315, 1983.

\bibitem[Edm89]{EdmondsAspects}
Allan Edmonds.
\newblock Aspects of group actions on four-manifolds.
\newblock {\em Topology Appl.}, 31(2):109--124, 1989.

\bibitem[Eic74]{Eichler}
Martin Eichler.
\newblock {\em Quadratische {F}ormen und orthogonale {G}ruppen}.
\newblock Die Grundlehren der mathematischen Wissenschaften, Band 63.
  Springer-Verlag, Berlin-New York, 1974.
\newblock Zweite Auflage.

\bibitem[Fin02]{Finashin}
Sergey Finashin.
\newblock Knotting of algebraic curves in {$\mathbb{C} P^2$}.
\newblock {\em Topology}, 41(1):47--55, 2002.

\bibitem[FKV88]{FinashinKreckViro}
Sergey Finashin, Matthias Kreck, and Oleg Viro.
\newblock Nondiffeomorphic but homeomorphic knottings of surfaces in the
  {$4$}-sphere.
\newblock In {\em Topology and geometry---{R}ohlin {S}eminar}, volume 1346 of
  {\em Lecture Notes in Math.}, pages 157--198. Springer, Berlin, 1988.

\bibitem[FQ90]{FreedmanQuinn}
Michael Freedman and Frank Quinn.
\newblock {\em Topology of 4-manifolds}, volume~39 of {\em Princeton
  Mathematical Series}.
\newblock Princeton University Press, Princeton, NJ, 1990.

\bibitem[Fre82]{Freedman}
Michael Freedman.
\newblock The topology of four-dimensional manifolds.
\newblock {\em J. Differential Geometry}, 17(3):357--453, 1982.

\bibitem[GHS13]{Gritsenko}
V.~Gritsenko, K.~Hulek, and G.~K. Sankaran.
\newblock Moduli of {K}3 surfaces and irreducible symplectic manifolds.
\newblock In {\em Handbook of moduli. {V}ol. {I}}, volume~24 of {\em Adv. Lect.
  Math. (ALM)}, pages 459--526. Int. Press, Somerville, MA, 2013.

\bibitem[GS99]{GompfStipsicz}
Robert Gompf and Andr\'{a}s Stipsicz.
\newblock {\em {$4$}-manifolds and {K}irby calculus}, volume~20 of {\em
  Graduate Studies in Mathematics}.
\newblock American Mathematical Society, Providence, RI, 1999.

\bibitem[HK88]{HambletonKreck}
Ian Hambleton and Matthias Kreck.
\newblock On the classification of topological {$4$}-manifolds with finite
  fundamental group.
\newblock {\em Math. Ann.}, 280(1):85--104, 1988.

\bibitem[HR78]{HambletonRiehm}
Ian Hambleton and Carl Riehm.
\newblock Splitting of {H}ermitian forms over group rings.
\newblock {\em Invent. Math.}, 45(1):19--33, 1978.

\bibitem[Kam02]{KamadaBook}
Seiichi Kamada.
\newblock {\em Braid and knot theory in dimension four}, volume~95 of {\em
  Mathematical Surveys and Monographs}.
\newblock American Mathematical Society, Providence, RI, 2002.

\bibitem[Kaw96]{KawauchiSurvey}
Akio Kawauchi.
\newblock {\em A survey of knot theory}.
\newblock Birkh\"{a}user Verlag, Basel, 1996.
\newblock Translated and revised from the 1990 Japanese original by the author.

\bibitem[KL90]{KleidmanLiebeck}
Peter Kleidman and Martin Liebeck.
\newblock {\em The subgroup structure of the finite classical groups}, volume
  129 of {\em London Mathematical Society Lecture Note Series}.
\newblock Cambridge University Press, Cambridge, 1990.

\bibitem[Kne57]{Kneser}
Martin Kneser.
\newblock Klassenzahlen definiter quadratischer {F}ormen.
\newblock {\em Arch. Math.}, 8:241--250, 1957.

\bibitem[Kre90]{KreckOnTheHomeomorphism}
Matthias Kreck.
\newblock On the homeomorphism classification of smooth knotted surfaces in the
  4- sphere.
\newblock Geometry of low-dimensional manifolds. 1: {Gauge} theory and
  algebraic surfaces, {Proc}. {Symp}., {Durham}/{UK} 1989, {Lond}. {Math}.
  {Soc}. {Lect}. {Note} {Ser}. 150, 63-72 (1990)., 1990.

\bibitem[KSTY99]{KatanagaSaekiTeragaitoYamada}
Atsuko Katanaga, Osamu Saeki, Masakazu Teragaito, and Yuichi Yamada.
\newblock Gluck surgery along a {$2$}-sphere in a {$4$}-manifold is realized by
  surgery along a projective plane.
\newblock {\em Michigan Math. J.}, 46(3):555--571, 1999.

\bibitem[Kug84]{Kuga}
Kenichi Kuga.
\newblock Representing homology classes of {$S\sp{2}\times S\sp{2}$}.
\newblock {\em Topology}, 23(2):133--137, 1984.

\bibitem[KV86]{KwasikVogel}
Slawomir Kwasik and Pierre Vogel.
\newblock Asymmetric four-dimensional manifolds.
\newblock {\em Duke Math. J.}, 53(3):759--764, 1986.

\bibitem[Law84]{Lawson}
Terry Lawson.
\newblock Detecting the standard embedding of {{\({\mathbb{R}}P^ 2\)}} in
  {{\(S^ 4\)}}.
\newblock {\em Math. Ann.}, 267:439--448, 1984.

\bibitem[Luo88]{Luo}
Feng Luo.
\newblock Representing homology classes of {${\bf C}{\rm P}^2\#\;\overline{{\bf
  C}{\rm P}}{}^2$}.
\newblock {\em Pacific J. Math.}, 133(1):137--140, 1988.

\bibitem[LW97]{LeeWilczyGenus}
Ronnie Lee and Dariusz~M. Wilczy\'{n}ski.
\newblock Representing homology classes by locally flat surfaces of minimum
  genus.
\newblock {\em Amer. J. Math.}, 119(5):1119--1137, 1997.

\bibitem[Miy23]{Miyazawa}
Jin Miyazawa.
\newblock A gauge theoretic invariant of embedded surfaces in $4$-manifolds and
  exotic ${P}^2$-knots.
\newblock 2023.
\newblock \url{https://arxiv.org/abs/2312.02041}.

\bibitem[MOJ{\etalchar{+}}23]{MaticOzturkReyesStipsiczUrzua}
Gordana Mati\'c, Ferit \"Ozt\"urk, Reyes Javier, Andr\'as Stipsicz, and Urz\'ua
  Giancarlo.
\newblock An exotic $5 \mathbb{R} {P}^2$ in the $4$-sphere.
\newblock 2023.
\newblock \url{https://arxiv.org/abs/2312.03617 }.

\bibitem[Nik79]{Nikulin}
Viacheslav Nikulin.
\newblock Integer symmetric bilinear forms and some of their geometric
  applications.
\newblock {\em Izv. Akad. Nauk SSSR Ser. Mat.}, 43(1):111--177, 238, 1979.

\bibitem[Pen24]{Pencovitch}
Mark Pencovitch.
\newblock Unknotting nonorientable surfaces of genus 4 and 5.
\newblock {\em Linear Algebra Appl.}, 702:195--217, 2024.

\bibitem[PRT25]{PowellRayTeichner}
Mark Powell, Arunima Ray, and Peter Teichner.
\newblock The 4-dimensional disc embedding theorem and dual spheres.
\newblock {\em Selecta Math. (N.S.)}, 31(4):Paper No. 80, 25, 2025.

\bibitem[Rei57]{Reiner}
Irving Reiner.
\newblock Integral representations of cyclic groups of prime order.
\newblock {\em Proc. Am. Math. Soc.}, 8:142--146, 1957.

\bibitem[Rud84]{Rudolph}
Lee Rudolph.
\newblock Some topologically locally-flat surfaces in the complex projective
  plane.
\newblock {\em Comment. Math. Helv.}, 59(4):592--599, 1984.

\bibitem[Ser78]{Serre}
Jean-Pierre Serre.
\newblock {\em A course in arithmetic. {Translation} of ''{Cours}
  d'arithmetique''. 2nd corr. print}, volume~7 of {\em Grad. Texts Math.}
\newblock Springer, Cham, 1978.

\bibitem[Tor25]{Torres}
Rafael Torres.
\newblock Smoothly knotted and topologically unknotted nullhomologous surfaces
  in 4-manifolds.
\newblock {\em Ann. Inst. Fourier (Grenoble)}, 75(6):2501--2527, 2025.

\bibitem[Voi23]{Voight}
John Voight.
\newblock Kneser's method of neighbors.
\newblock {\em Arch. Math. (Basel)}, 121(5-6):537--557, 2023.

\end{thebibliography}

\end{document}